\newtheorem{theorem}{Theorem}[section]
\newtheorem{proposition}{Proposition}[section]
\newtheorem{lemma}{Lemma}[section]
\theoremstyle{definition}
\newtheorem{definition}{Definition}[section]
\title{Boundary controllability for a coupled system of degenerate/singular parabolic equations}
\author{
{\sc  Brahim Allal}\\
Facult\'e des Sciences et Techniques,\\
Universit\'e Hassan 1er, Laboratoire MISI,\\
B.P. 577, Settat 26000, Morocco\\
email: b.allal@uhp.ac.ma\\\\
{\sc  Abdelkarim Hajjaj}\\
Facult\'e des Sciences et Techniques,\\
Universit\'e Hassan 1er, Laboratoire MISI,\\
B.P. 577, Settat 26000, Morocco\\
email: abdelkarim.hajjaj@uhp.ac.ma\\\\
{\sc  Jawad Salhi}\\
Moulay Ismail University of Meknes\\
FST Errachidia, MAIS Laboratory, MAMCS Group,\\
P.O. Box 509, Boutalamine 52000, Errachidia, Morocco\\
email: sj.salhi@gmail.com\\\\
{\sc Amine Sbai }\\
Facult\'e des Sciences et Techniques,\\
Universit\'e Hassan 1er, Laboratoire MISI,\\
B.P. 577, Settat 26000, Morocco\\
email: a.sbai@uhp.ac.ma
}
\date{}
\begin{document}

\maketitle

\begin{abstract}
In this paper we study the boundary controllability for a system of two coupled degenerate/singular parabolic equations with a control acting on only one equation. We analyze both approximate and null boundary controllability properties. 
Besides, we provide an estimate on the null-control cost. The proofs are based on the use of the moment method by Fattorini and Russell together with some results on biorthogonal families.
\end{abstract}

Keywords: Boundary controllability, coupled systems, degenerate parabolic equations, singular potentials, moment method.

\section{Introduction}
This work is devoted to the study of the boundary controllability properties of the following controlled system:
\begin{equation}\label{bound-sys}
\left\{
\begin{array}{lll}
y_t - (x^\alpha y_x)_x - \frac{\mu}{x^{2-\alpha}}y = A y ,  &  & (t,x)\in Q := (0, T ) \times (0, 1), \\
y(t,1)= B v, & &  t \in (0, T),  \\
y(t, 0) = 0,   & & t \in (0, T),\\
y(0,x)= y_{0}(x),  & & x\in (0,1),
\end{array}
\right.
\end{equation}
where $T > 0$  is some final time, $0 \leq \alpha < 1$, $\mu \leq \mu(\alpha) = \frac{(1 - \alpha)^2}{4}$, $y_0 \in L^2(0, 1)$, $y=(y_1,y_2)^*$ is the state variable and $v=v(t)$ is the control function which acts on the system by means of the Dirichlet boundary condition at the point $x = 1$. Moreover, $A \in \mathcal{L}(\mathbb{R}^2)$ and $B \in \mathbb{R}^2$ are, respectively, a suitable coupling matrix and a control operator, chosen so that:
\begin{equation}\label{mainhyp}
rank [B | AB] = 2.
\end{equation}
Notice that, taking $P = [B|AB]$, by performing the
change of variables $\tilde{y} = P^{-1}y$, one obtains the following reformulation
of \eqref{bound-sys}:
\begin{equation}\label{new-sys}
\left\{
\begin{array}{lll}
\tilde{y}_t - (x^\alpha \tilde{y}_x)_x - \frac{\mu}{x^{2-\alpha}}\tilde{y} = \tilde{A} \tilde{y},  &  & (t,x)\in (0, T) \times (0, 1), \\
\tilde{y}(t,1)= \tilde{B} v, & & t\in (0,T),  \\
\tilde{y}(t, 0) = 0,  & & t \in (0,T),\\
\tilde{y}(0,x)= P^{-1} y_{0}(x),  & & x\in(0, 1),
\end{array}
\right.
\end{equation}
where
\begin{align*}
\tilde{A} = \begin{pmatrix} 0 & a_1 \\1 & a_2 \end{pmatrix}\quad \text{and}\quad \tilde{B}=e_1=\left(\begin{matrix}
 1 \\
0
\end{matrix}\right).
\end{align*}
Therefore, for simplicity, it will be assumed in the rest of the paper that $A$ and $B$ are given by
\begin{align}\label{AandB}
A= \begin{pmatrix} 0 & a_1 \\1 & a_2 \end{pmatrix}
\quad \text{and}\quad
B=e_1
=\left(\begin{matrix}
 1 \\
 0
\end{matrix}\right).
\end{align}
This means that we are exerting only one control force on the system but we want to control the corresponding state $y=(y_1,y_2)$ which has two components.

The starting point of the present work is the results established in \cite{FBT} for the boundary controllability properties of the (uniformly) parabolic system:
\begin{equation}\label{bound-sys-cara}
\left\{
\begin{array}{lll}
y_t -  y_{xx} = A y ,  &  & (t,x)\in Q, \\
y(t, 0) = B v,   & & t \in (0, T),\\
y(t,1)= 0, & &  t \in (0, T),  \\
y(0,x)= y_{0}(x),  & & x\in (0,1),
\end{array}
\right.
\end{equation}
with $A$ and $B$ as defined previously. In fact, as shown in \cite{FBT}, there exists two different situations:
\begin{enumerate}
\item If the matrix $A$ in \eqref{bound-sys-cara} has one
double real eigenvalue or a couple of conjugate complex eigenvalues, \eqref{mainhyp} is a necessary and
sufficient condition for the null controllability at any time.
\item If $A$ has two different real eigenvalues, an additional condition is needed for null controllability,
independently of the considered vector $B$. 
\end{enumerate}

Since then, several other works followed extending them in various situations. See for instance  \cite{KBGT2016,KBGT2014,ABGT2011,ben2020,BBGBO14,duprez,BandN}.

However, all the previous cited papers deal with uniformly parabolic problems without degeneracies or singularities.
In recent years, controllability issues for degenerate and/or singular parabolic problems by means of a locally distributed control have been investigated in many papers, see \cite{BZ2016,Cazacu2014,Ervedoza2008,F2016,FM2017,Van2011,VanZua2008}. For related systems of coupled degenerate/singular parabolic equations we refer to \cite{AHMS2020,HMS2016,S2018}.

To our best knowledge, the known boundary controllability results of degenerate and/or singular parabolic problems are in the
scalar case (see for example \cite{biccari2019,biccsantavan2020,CMV2017,CMV2020,Gueye,MarVan2019}).

The aim of this research is to establish general results in the case of coupled degenerate/singular parabolic equations.
To prove our results we will use the moment method by Fattorini and Russell, introduced in \cite{fatrus1974,fatrus1971} in the framework of the boundary controllability of the one-dimensional scalar heat equation.

At first, we will see that for every $v\in L^2(0,T)$ and $y_0\in H^{-1,\mu}_{\alpha}(0,1)^2$, system \eqref{bound-sys} admits a unique weak solution defined by transposition that satisfies
\begin{equation*}
y \in  L^2(Q)^2  \cap  C^0\big([0, T], H^{-1,\mu}_{\alpha}(0, 1)^2 \big).
\end{equation*}
Observe that the previous regularity permits to pose the boundary controllability of the singular system \eqref{bound-sys} in the space $H^{-1,\mu}_{\alpha}(0, 1)^2$ (that will be defined later in section \ref{Section-prel}). 

Then, we pass to analyse both approximate and  null controllability issues using a boundary
control acting at $x = 1$. So, we use the following notions:
\begin{definition}
\begin{enumerate}
\item It will be said that system \eqref{bound-sys} is approximately controllable in $H^{-1,\mu}_{\alpha}(0, 1)^2$ at time $T>0$ if for every $y_0, y_d \in H^{-1,\mu}_{\alpha}(0,1)^2$ and any $\varepsilon > 0$, there exists a control function $v\in L^2(0,T)$ such that the solution $y$ to system \eqref{bound-sys} satisfies
\begin{equation*}
\|y(T,\cdot)-y_d\|_{H^{-1,\mu}_{\alpha}(0,1)^2}\leq \varepsilon.
\end{equation*}
\item It will be said that system \eqref{bound-sys} is null controllable at time $T>0$ if for every $y_0 \in H^{-1,\mu}_{\alpha}(0, 1)^2$, there exists a control $v\in L^2(0,T)$ such that the solution $y$ to system \eqref{bound-sys} satisfies
\begin{equation*}
y(T,\cdot)=0,\quad \text{in}\quad H^{-1,\mu}_{\alpha}(0, 1)^2.
\end{equation*}
\end{enumerate}
\end{definition}
We emphasize that imposing a control that acts at the nonsingular point does not imply a simple adaptation of the previous distributed controllability results. For example, at a first glance, one may think that our boundary controllability results can be obtained  directly by standard extension and localization arguments from the corresponding distributed controllability results as in the case of scalar  parabolic equations. But this is not the case and the situation is quite different for non-scalar parabolic systems. Indeed, while the Kalman's rank condition \eqref{mainhyp} is a necessary and sufficient condition for the null controllability at any time in the distributed case, it was proved in \cite{FBT} that it is necessary, but not sufficient, for the boundary controllability for coupled parabolic systems.

In order to get the approximate controllability result of \eqref{bound-sys}, we will need the following known result (see \cite{ABGT2011} or \cite{FBT}).

\begin{theorem}\label{thm-biorth0}
Let $T>0$. Suppose that $ \{\Lambda_n\}_{n\geq1} $ is a sequence of complex numbers such that, for some $ \delta, \rho >0$, one has
\begin{equation}\label{Cv-gap-orthog0}
\left\{
\begin{array}{l}
\Re(\Lambda_n) \geq \delta |\Lambda_n|, \qquad |\Lambda_n - \Lambda_m| \geq \rho |n - m|, \quad \forall n, m \geq 1, \\
\sum\limits_{n\geq 1} \dfrac{1}{|\Lambda_n|} < + \infty .
\end{array}
\right.
\end{equation}
Then, there exists a family $\{q_n \}_{n\geq 1} \subset L^2(0, T) $ biorthogonal to $\{e^{- \Lambda_n t}\}_{n\geq 1}$ i.e., a family $\{q_n \}_{n\geq 1} $ in $L^2(0, T)$ such that
$$ \int_0^T q_n(t) e^{- \Lambda_m t} \, dt = \delta_{nm}, \quad \forall n,m\geq 1.$$
Moreover, for every $ \varepsilon > 0 $, there exists $ C_{\varepsilon} > 0 $ for which
\begin{equation*}
\|q_n \|_{L^2(0, T)} \leq C_\varepsilon e^{\varepsilon \Re(\Lambda_n)}, \qquad \forall n \geq 1.
\end{equation*}
\end{theorem}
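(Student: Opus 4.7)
\medskip

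The plan is to reduce the biorthogonality requirement to a complex-analytic interpolation problem via the Laplace transform. Indeed, the condition
$$\int_0^T q_n(t)\, e^{-\Lambda_m t}\,dt = \delta_{nm}$$
says exactly that the Laplace transform $Q_n(z):=\int_0^T q_n(t)e^{-zt}\,dt$ satisfies $Q_n(\Lambda_m)=\delta_{nm}$. By the Paley--Wiener theorem, the admissible $Q_n$ are precisely the entire functions of exponential type at most $T$ whose restriction to a vertical line is square integrable. The task is therefore to produce, for each $n$, such a Paley--Wiener function that interpolates $\delta_{nm}$ at the nodes $\{\Lambda_m\}$, with $L^2$-norm controlled by $e^{\varepsilon \Re(\Lambda_n)}$.

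\medskip

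The summability assumption $\sum 1/|\Lambda_n|<\infty$ allows me to form the genus-zero Weierstrass product
$$\Phi(z)=\prod_{n\geq 1}\left(1-\frac{z}{\Lambda_n}\right),$$
which is entire with simple zeros at the $\Lambda_n$ and of minimal exponential type: for every $\varepsilon>0$, $|\Phi(z)|\leq C_\varepsilon e^{\varepsilon|z|}$. The natural candidate
$$Q_n(z)=\frac{\Phi(z)}{\Phi'(\Lambda_n)(z-\Lambda_n)}$$
satisfies the interpolation condition automatically. Using the gap assumption $|\Lambda_n-\Lambda_m|\geq\rho|n-m|$, I would isolate the factor with index $n$ and bound the remaining product to obtain a lower estimate $|\Phi'(\Lambda_n)|\geq c_\varepsilon e^{-\varepsilon|\Lambda_n|}$ for every $\varepsilon>0$.

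\medskip

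The main obstacle is that $\Phi$ is only of minimal type, not of type at most $T$, so the raw $Q_n$ does not yet lie in the Paley--Wiener class. I would resolve this by multiplying by an auxiliary entire \emph{multiplier} $M(z)$ of exponential type strictly less than $T$, chosen so that it decays on the imaginary axis (to enforce $L^2$) while remaining bounded from below at each $\Lambda_n$ (so that dividing by $M(\Lambda_n)$ to restore interpolation does not spoil the estimates). The sector condition $\Re(\Lambda_n)\geq\delta|\Lambda_n|$ is crucial here, since a $\sin$-type multiplier such as $M(z)=\prod_k \frac{\sin(a_k z)}{a_k z}$, with the $a_k$ tuned from the distribution of $|\Lambda_n|$, is bounded from below on any such sector but decays on the imaginary line. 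The modified function $\widetilde Q_n(z):=Q_n(z)\,M(z)/M(\Lambda_n)$ then belongs to the Paley--Wiener class.

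\medskip

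Applying the Paley--Wiener inversion yields $q_n\in L^2(0,T)$ (up to a translation bringing the support into $[0,T]$). Plancherel's identity on the imaginary axis gives $\|q_n\|_{L^2(0,T)} \leq C\,\|\widetilde Q_n\|_{L^2(i\mathbb{R})}$, which, combined with the lower bounds on $|\Phi'(\Lambda_n)|$ and $|M(\Lambda_n)|$, produces an estimate of the form $\|q_n\|_{L^2(0,T)}\leq C_\varepsilon e^{\varepsilon|\Lambda_n|}$. Since $|\Lambda_n|\leq \Re(\Lambda_n)/\delta$, this immediately converts (after rescaling $\varepsilon$) into the desired bound $\|q_n\|_{L^2(0,T)}\leq C_\varepsilon e^{\varepsilon\Re(\Lambda_n)}$. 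The delicate balance between the growth of $\Phi$ and the compensating decay supplied by $M$, achieved through the sector condition, is the heart of the argument.
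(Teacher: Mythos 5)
The paper offers no proof of this theorem: it is quoted as a known result from \cite{ABGT2011} and \cite{FBT}, so there is no internal argument to compare with. Your overall strategy --- pass to the Laplace transform, build the genus-zero canonical product $\Phi(z)=\prod(1-z/\Lambda_n)$ (of minimal exponential type thanks to $\sum 1/|\Lambda_n|<\infty$), bound $|\Phi'(\Lambda_n)|$ from below via the gap condition, and correct by a multiplier of type $<T$ to land in the Paley--Wiener class --- is precisely the Fattorini--Russell scheme implemented in those references, so the architecture is the right one.

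As written, however, the proposal has a concrete gap at the multiplier step, which is the heart of the proof. In the Laplace picture you set up ($Q_n(z)=\int_0^T q_n(t)e^{-zt}\,dt$, with $L^2$ control required on the imaginary axis and the nodes $\Lambda_m$ lying in the sector $\Re z\ge\delta|z|$ around the positive real axis), the function $M(z)=\prod_k\sin(a_kz)/(a_kz)$ does the opposite of what you claim: on the imaginary axis $\sin(ia_ky)/(ia_ky)=\sinh(a_ky)/(a_ky)$ grows like $e^{a_k|y|}$, and the zeros $\pi j/a_k$ of $M$ sit on the positive real axis, inside the very sector containing the $\Lambda_n$, so $M$ is certainly not bounded below there. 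The construction is salvageable only by switching pictures: either work with the Fourier transform, interpolating at $-i\Lambda_m$ (which the sector condition pushes a distance $\ge\delta|\Lambda_m|$ off the real axis, safely away from the zeros of $M$, while $L^2$ is then required on the real axis, where $M$ genuinely decays), or replace each factor by its rotated, half-plane-bounded version $(1-e^{-2a_kz})/(2a_kz)$. Beyond this, the two estimates where all the work lies are merely asserted: the lower bound $|\Phi'(\Lambda_n)|\ge c_\varepsilon e^{-\varepsilon|\Lambda_n|}$ (which requires a careful split of the product over near and far indices using both the gap and the summability), and the tuning of the $a_k$ so that $M$ decays faster than the actual growth of $\Phi$ on the relevant axis while $\sum_k a_k$ stays within the type budget. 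On that last point, note that ``entire of exponential type $\le T$ and $L^2$ on a line'' only yields an inverse transform supported in an interval of length $2T$; to land in $L^2(0,T)$ you must control the indicator diagram in both directions, and your ``up to a translation'' remark does not resolve this. Until these steps are supplied, the proposal is a correct outline of the standard proof rather than a proof.
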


It is worth mentioning that the above Theorem can also be applied to get the null controllability result for the system \eqref{bound-sys}. However, it does not permit to deduce the required exponential estimate on the null-control cost.

For this reason, to obtain the null controllability result together with an estimate of the control cost, we are going to apply the next result provided in \cite{BBGBO14}.
\begin{theorem}\label{thm-biorth}
Let $ \{\Lambda_n\}_{n\geq1} $ be a sequence of complex numbers fulfilling the following assumptions:
\begin{enumerate}
\item
$\Lambda_n \neq \Lambda_m$ for all $n, m \geq 1$ with $n \neq m$;
\item
$\Re (\Lambda_n) > 0$ for every $n \geq 1$;
\item
for some $ \delta >0$
$$|\Im (\Lambda_n)| \leq \delta \sqrt{\Re (\Lambda_n)} \quad \forall n \geq 1;$$
\item
$ \{\Lambda_n\}_{n\geq1} $ is nondecreasing in modulus,
$$ |\Lambda_n| \leq |\Lambda_{n+1}| \quad \forall n \geq 1; $$
\item
$ \{\Lambda_n\}_{n\geq1} $ satisfies the following gap condition: for some
$\varrho, q >0 $,
\begin{equation}\label{strong-gap}
\left\{
\begin{array}{l}
|\Lambda_n - \Lambda_m| \geq \varrho |n^2 - m^2| \quad \forall n, m: |n -m|\geq q, \\
\inf\limits_{n\neq m, \; |n -m|<q}|\Lambda_n - \Lambda_m| > 0;
\end{array}
\right.
\end{equation}
\item
for some $p, s> 0,$
\begin{equation}\label{ineq-counting}
| p \sqrt{r} - \mathcal{N}(r)| \leq s, \quad \forall r >0,
\end{equation}
where $\mathcal{N}$ is the counting function associated with the sequence
$ \{\Lambda_n\}_{n\geq1} $, that is the function defined by
\begin{equation*}
\mathcal{N}(r) = \#\{ n : \, |\Lambda_n| \leq r \}, \qquad \forall r>0.
\end{equation*}
\end{enumerate}
Then, there exists $T_0 > 0 $, such that for any $T\in (0,T_0)$, we can find a family $\{q_n \}_{n\geq 1} \subset L^2(-T/2, T/2) $ biorthogonal to $\{e^{- \Lambda_n t}\}_{n\geq 1}$ i.e., a family $\{q_n \}_{n\geq 1} $ in $L^2(-T/2, T/2)$ such that
$$ \int_{-T/2}^{T/2} q_n(t) e^{- \Lambda_m t} \, dt = \delta_{nm}. $$
Moreover, there exists a positive constant $ C > 0 $  independent of $T$ for which
\begin{equation}\label{boundqni}
\|q_n \|_{L^2(-T/2, T/2)} \leq C e^{C\sqrt{\Re (\Lambda_n)} + \frac{C}{T}}, \qquad \forall n \geq 1.
\end{equation}
Here for $z\in \mathbb{C}$, $\Re (z)$  and $\Im (z)$ denote the real and imaginary parts of $z$.
\end{theorem}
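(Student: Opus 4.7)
The plan is to translate the construction, via the Paley-Wiener theorem, into a question about entire functions of exponential type. Setting $\hat{q}_n(z) := \int_{-T/2}^{T/2} q_n(t)\,e^{-zt}\,dt$, the biorthogonality requirement becomes $\hat{q}_n(\Lambda_m) = \delta_{nm}$, while Paley-Wiener guarantees that $q_n \in L^2(-T/2, T/2)$ if and only if $\hat{q}_n$ extends to an entire function of exponential type at most $T/2$ whose restriction to the imaginary axis is square integrable, with $\sqrt{2\pi}\,\|q_n\|_{L^2} = \|\hat{q}_n\|_{L^2(i\mathbb{R})}$. Hence it suffices to construct, for each $n\geq 1$, an entire function $F_n$ of exponential type $\leq T/2$ satisfying $F_n(\Lambda_m) = \delta_{nm}$ and $F_n|_{i\mathbb{R}}\in L^2$ with the desired bound \eqref{boundqni}.

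To build $F_n$ I would use a canonical Weierstrass product. Because \eqref{ineq-counting} forces $|\Lambda_n| \sim n^2/p^2$, the series $\sum |\Lambda_k|^{-1}$ converges, so the genus-zero product
\begin{equation*}
P(z) := \prod_{k\geq 1}\left(1 - \frac{z}{\Lambda_k}\right)
\end{equation*}
defines an entire function of order $1/2$ whose zero set is exactly $\{\Lambda_k\}_{k\geq 1}$. Then set
\begin{equation*}
F_n(z) = \frac{P(z)}{(z-\Lambda_n)\,P'(\Lambda_n)}\,M_n(z),
\end{equation*}
where $M_n$ is an entire function of exponential type $\leq T/2$ normalized by $M_n(\Lambda_n)=1$, included both to confer the required $T/2$ type and to supply decay along the imaginary axis. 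A standard choice involves Schwartz-type multipliers built from $\sin(\eta(z-\Lambda_n))/(\eta(z-\Lambda_n))$ with $\eta$ proportional to $T$, possibly raised to a small fixed power to achieve $L^2$ integrability.

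The remaining work is two sharp estimates: a lower bound on $|P'(\Lambda_n)|$, coming from the strong gap condition \eqref{strong-gap} which keeps $\Lambda_n$ uniformly separated from the rest of the spectrum on the scale $|n^2-m^2|$, and an upper bound $|P(i\xi)| \leq C e^{C\sqrt{|\xi|}}$ on the imaginary axis, following from \eqref{ineq-counting} together with assumption (3), which confines the $\Lambda_n$ to the parabolic region $\{|\Im z| \leq \delta\sqrt{\Re z}\}$ well away from $i\mathbb{R}$. Combining these with the contribution of $M_n$ yields $\|F_n\|_{L^2(i\mathbb{R})} \leq C\,e^{C\sqrt{\Re(\Lambda_n)} + C/T}$; the $\sqrt{\Re(\Lambda_n)}$ term reflects the order-$1/2$ growth of $P$ near $\Lambda_n$ along the real axis, while the $C/T$ term is the price paid for squeezing the support of $q_n$ into $[-T/2,T/2]$. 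The main technical obstacle is precisely the simultaneous design of $M_n$: it must have exponential type no larger than $T/2$, provide enough decay on $i\mathbb{R}$ to dominate the factor $e^{C\sqrt{|\xi|}}$ coming from $P$, and preserve the interpolation value $F_n(\Lambda_n)=1$, all with constants tracked uniformly in $n$. Balancing these competing constraints, which is what ultimately forces the $e^{C/T}$ blow-up as $T\to 0$, is the heart of the argument in \cite{BBGBO14}.
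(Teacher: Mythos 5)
First, a point of comparison: the paper does not prove this theorem at all --- it is quoted verbatim from \cite{BBGBO14} and used as a black box, so there is no ``paper's own proof'' to match your argument against. Your outline does follow the strategy of that reference (Paley--Wiener reduction, a genus-zero Weierstrass product over the $\Lambda_k$, division by $(z-\Lambda_n)P'(\Lambda_n)$, and a type-$T/2$ multiplier supplying decay on $i\mathbb{R}$), so at the level of architecture you have identified the right proof.

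However, as a proof it has genuine gaps, and you have in effect deferred exactly the hard parts back to \cite{BBGBO14}. Concretely: (i) the multiplier $M_n$ is never constructed. A factor $\bigl(\sin(\eta(z-\Lambda_n))/(\eta(z-\Lambda_n))\bigr)^{\theta}$ ``raised to a small fixed power'' is not entire unless $\theta$ is a positive integer, and a single sinc power decays only polynomially on $i\mathbb{R}$, which cannot dominate the $e^{C\sqrt{|\xi|}}$ growth of $P(i\xi)$; one needs an infinite product of sinc-type factors with carefully tuned frequencies summing to at most $T/2$ (the Fattorini--Russell / Tenenbaum--Tucsnak device) to obtain decay like $e^{-c\sqrt{|\xi|}}$ while the value at $\Lambda_n$ stays bounded below by $e^{-C\sqrt{\Re\Lambda_n}-C/T}$; this balancing is precisely where the $e^{C/T}$ factor and the threshold $T_0$ come from, and neither is explained by your sketch. (ii) The two ``sharp estimates'' --- the lower bound $|P'(\Lambda_n)|\geq c\,e^{-C\sqrt{\Re\Lambda_n}}$ from the gap condition \eqref{strong-gap} together with the counting estimate \eqref{ineq-counting}, and the upper bound $|P(i\xi)|\leq Ce^{C\sqrt{|\xi|}}$ from \eqref{ineq-counting} and hypothesis (3) --- are stated as goals rather than derived; each requires a nontrivial summation-by-parts argument against $\mathcal{N}(r)$ and uniform tracking of constants in $n$ and $T$. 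Since the estimate \eqref{boundqni} with its precise $e^{C\sqrt{\Re\Lambda_n}+C/T}$ dependence is the entire content of the theorem (mere existence of a biorthogonal family already follows from the weaker Theorem \ref{thm-biorth0}), a proof that asserts these bounds without establishing them has not proved the statement.
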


The rest of the paper is organized as follows. In Section \ref{Section-prel}, we prove the well-posedness of the problem \eqref{bound-sys} in appropriate weighted spaces using the transposition method and recall
some characterizations of the controllability.
In section \ref{Section-eigenvalue}, we discuss the spectral analysis related to scalar singular operators and present a description of
the spectrum associated with system \eqref{bound-sys} which will be useful for developing the moment method.
Section \ref{Section-approx} is devoted to studying the boundary approximate controllability problem for the system \eqref{bound-sys}.
Finally, in section \ref{Section-null}, we prove the boundary null controllability result and establish an estimate of the control cost.

\section{Preliminary results}\label{Section-prel}
\subsection{Functional framework}
\label{CP}
In the study of degenerate/singular problems, it is by now classical
that of great importance is the following generalized Hardy inequality (see, for example, \cite{Van2011} or \cite[Lemma 5.3.1]{Davies1995}): for all $\alpha\in [0,2)$,
\begin{equation}\label{hardyinequality}
\frac{(1-\alpha)^2}{4}\int_{0}^{1}x^{\alpha-2}z^2\,dx \leq \int_{0}^{1}x^{\alpha}z^2_x\,dx,\quad \forall z\in C^{\infty}_{c}(0,1).
\end{equation}

For any $\mu\leq \mu(\alpha)$, we introduce the functional space associated to degenerate/singular problems:
\begin{align*}
&H_{\alpha}^{1,\mu}(0,1):= \Big\{ z \in L^2(0,1)\cap H^{1}_{loc}((0,1])\,\mid \int_{0}^{1}(x^{\alpha}z_x^2 - \mu \frac{z^2}{x^{2-\alpha}})\,dx<+\infty \Big\}\\
&H_{\alpha,0}^{1,\mu}(0,1):= \Big\{ z \in H_{\alpha}^{1,\mu}(0,1)\,\mid z(0)=z(1)=0 \Big\}.
\end{align*}
Further, we define $H^{-1,\mu}_{\alpha,0}(0,1)$  the dual space of $H_{\alpha,0}^{1,\mu}(0,1)$ with respect to the pivot space
$L^2(0,1)$, endowed with the natural norm
\begin{equation*}
\| f \|_{H_{\alpha,0}^{-1,\mu}(0,1)}:=  \sup_{\| g \|_{H_{\alpha,0}^{1,\mu}(0,1)}=1} \langle f, g \rangle_{H^{-1,\mu}_{\alpha,0}(0,1), H_{\alpha,0}^{1,\mu}(0,1)}.
\end{equation*}
We also define
\begin{equation*}
H^{2,\mu}_{\alpha}(0,1)= \big\{z\in H^{1,\mu}_{\alpha}(0,1)\cap H_{loc}^2((0,1])\quad \mid \quad (x^{\alpha}z_{x})_x + \frac{\mu}{x^{2-\alpha}}z \in L^2(0,1) \big\}.
\end{equation*}

Notice besides that, as $C^{\infty}_{c}(0,1)$ is dense both in $L^2(0,1)$ and in $H_{\alpha,0}^{1,\mu}(0,1)$, $H_{\alpha,0}^{1,\mu}(0,1)$ is dense in $L^2(0,1)$.

In what follows, for simplicity, we will always denote
by $\langle \cdot,\cdot \rangle$ the standard scalar product of either $L^2(0,1)$ or $L^2(0,1)^2$, by $\langle \cdot,\cdot \rangle_{X',X}$ the duality
pairing between the Hilbert space $X$ and its dual $X'$.  On the other hand, we will use $\|\cdot \|^{\mu}_{\alpha}$ (resp. $\|\cdot \|^{-1,\mu}_{\alpha}$) for denoting the norm of $H_{\alpha,0}^{1,\mu}(0,1)^2$ (resp. $H^{-1,\mu}_{\alpha}(0,1)^2$).

\subsection{Well-posedness}

Now, we are ready to give some results related to the existence, uniqueness and continuous
dependence with respect to the data of the degenerate/singular problem \eqref{bound-sys}.
To this aim, let us consider the following nonhomogeneous adjoint problem:
\begin{equation}\label{nhadjoint-sys}
\left\{
\begin{array}{lll}
- \varphi_t - (x^{\alpha}\varphi_{x})_x - \frac{\mu}{x^{2-\alpha}}\varphi = A^* \varphi + g,  &  & \text{in} \; Q, \\
\varphi(t, 0)=\varphi(t,1) = 0, & & t \in (0, T),\\
\varphi(T,x)= \varphi_0,  & & \text{in}\; (0, 1),
\end{array}
\right.
\end{equation}
where $A$ is given in \eqref{AandB} and $\varphi_0$ and $g$ are functions in appropriate spaces.

Let us start with a first result on existence and uniqueness of strict solutions to system \eqref{nhadjoint-sys}.
One has (see \cite{allal2021} or \cite[Definition 4.1]{biccsantavan2020}):
\begin{proposition}\label{well-posed}
Assume that $\varphi_0\in H_{\alpha,0}^{1,\mu}(0,1)^2$ and $g\in L^2(Q)^2$. Then, system \eqref{nhadjoint-sys} admits a unique strict solution
\begin{equation*}
\begin{aligned}
\varphi \in  C^0([0, T]; H_{\alpha,0}^{1,\mu}(0,1)^2)&\cap H^1(0,T;L^2(0,1)^2)\\
&\cap L^2(0, T; H_{\alpha}^{2,\mu}(0,1)^2\cap H_{\alpha,0}^{1,\mu}(0,1)^2)
\end{aligned}
\end{equation*}
such that
\begin{equation}\label{energy-sol}
\begin{aligned}
\|\varphi\|_{C^0([0, T]; H_{\alpha,0}^{1,\mu}(0,1)^2)} &+ \|\varphi\|_{H^1(0,T;L^2(0,1)^2)} + \|\varphi\|_{L^2(0, T;  H_{\alpha}^{2,\mu}(0,1)^2\cap H_{\alpha,0}^{1,\mu}(0,1)^2)}  \\
&\leq C \Big(\|\varphi_0\|^{\mu}_{\alpha} + \|g\|_{L^2(Q)^2} \Big),
\end{aligned}
\end{equation}
for some positive constant $C$.
\end{proposition}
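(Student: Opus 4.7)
The plan is to reduce the terminal–boundary value problem to a standard forward Cauchy problem and then exploit the self‑adjoint structure of the scalar singular operator. Setting $\psi(t,x) := \varphi(T-t,x)$ and $\tilde g(t,\cdot) := g(T-t,\cdot) \in L^2(Q)^2$, problem \eqref{nhadjoint-sys} becomes
\begin{equation*}
\psi_t - (x^\alpha \psi_x)_x - \frac{\mu}{x^{2-\alpha}}\psi = A^*\psi + \tilde g,\quad \psi(t,0)=\psi(t,1)=0,\quad \psi(0,\cdot)=\varphi_0,
\end{equation*}
so it suffices to prove the announced regularity and estimate for $\psi$.

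Next, I would introduce the scalar operator $\mathcal{A} z := (x^\alpha z_x)_x + \frac{\mu}{x^{2-\alpha}}z$ with domain $D(\mathcal{A}) := H^{2,\mu}_\alpha(0,1) \cap H^{1,\mu}_{\alpha,0}(0,1)$, together with its associated symmetric bilinear form
\begin{equation*}
a(u,v) := \int_0^1 x^\alpha u_x v_x\,dx - \mu \int_0^1 \frac{uv}{x^{2-\alpha}}\,dx.
\end{equation*}
The Hardy inequality \eqref{hardyinequality} combined with the assumption $\mu \le \mu(\alpha)$ makes $a$ continuous and (up to a shift $a+\lambda\langle\cdot,\cdot\rangle$) coercive on $H^{1,\mu}_{\alpha,0}(0,1)$. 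Therefore $-\mathcal{A}$ is a nonnegative self‑adjoint operator with compact resolvent on $L^2(0,1)$ and generates a bounded analytic semigroup. Acting componentwise and adding the bounded matrix perturbation $A^*$, the operator $\mathcal{L} := \mathcal{A}\otimes I_2 + A^*$ is again the generator of an analytic semigroup on $L^2(0,1)^2$, with the same domain $D(\mathcal{A})^2$.

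The regularity claim then follows from maximal $L^2$-regularity for analytic semigroups: since $\varphi_0 \in H^{1,\mu}_{\alpha,0}(0,1)^2$ coincides with the form domain $D((-\mathcal{L}+\lambda I)^{1/2})^2$ (the natural energy space for $a$) and $\tilde g \in L^2(0,T;L^2(0,1)^2)$, one obtains
\begin{equation*}
\psi \in C^0([0,T];H^{1,\mu}_{\alpha,0}(0,1)^2) \cap H^1(0,T;L^2(0,1)^2) \cap L^2(0,T;(H^{2,\mu}_\alpha \cap H^{1,\mu}_{\alpha,0})^2),
\end{equation*}
together with the corresponding norm bound in terms of $\|\varphi_0\|^{\mu}_\alpha + \|\tilde g\|_{L^2(Q)^2}$. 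Reverting to $\varphi$ by the time change yields exactly the spaces in the statement and the estimate \eqref{energy-sol}.

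The main obstacle is the critical case $\mu=\mu(\alpha)$, where Hardy's inequality gives only nonnegativity of $a$, so coercivity holds merely modulo a shift. I would handle this either by working with $\mathcal{L}-\lambda I$ and absorbing the shift in a Grönwall step, or by a Faedo–Galerkin scheme based on the orthonormal basis of eigenfunctions of $-\mathcal{A}$ (to be constructed in Section \ref{Section-eigenvalue}), for which the standard energy identities obtained by testing the truncated equation against $\psi_n$ and $-\mathcal{A}\psi_n$ provide uniform bounds in the three spaces above; passage to the limit is ensured by the lower semicontinuity of the Hardy quadratic form. In either route the argument is essentially the one developed in \cite{allal2021,biccsantavan2020}, adapted in a straightforward way to the present $2\times 2$ coupled setting thanks to the boundedness of $A^*$.
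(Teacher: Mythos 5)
The paper does not actually prove Proposition \ref{well-posed}: it is stated with a pointer to \cite{allal2021} and \cite{biccsantavan2020}, and your argument is precisely the route those references take for the scalar problem, so your proposal is essentially the same approach as the paper's (delegated) proof. Your reduction by time reversal, the form method with the Hardy inequality \eqref{hardyinequality} to get a (shifted) nonnegative self-adjoint scalar operator, the bounded perturbation by $A^*$, and maximal $L^2$-regularity with initial data in the form domain are all sound; the only ingredient you implicitly borrow rather than prove is the identification of the domain of the form operator with $H^{2,\mu}_{\alpha}(0,1)\cap H^{1,\mu}_{\alpha,0}(0,1)$ (i.e.\ elliptic regularity for the degenerate/singular operator), which is exactly what the cited works supply, so this is an acceptable level of detail here.
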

In view of Proposition \ref{well-posed}, the following definition makes sense:
\begin{definition}\label{transp}
Let $y_0 \in H_{\alpha}^{-1,\mu}(0,1)^2$ and $v \in L^2(0, T)$ be given. It will be said that $y \in L^2(Q)^2$ is a solution by transposition to \eqref{bound-sys} if, for each $g \in L^2(Q)^2$, the following identity holds
\begin{equation}\label{def-transp}
\int\!\!\!\!\!\int_{Q} y \cdot g \,dx\,dt  = \langle y_0 , \varphi(0, \cdot) \rangle_{H_{\alpha}^{-1,\mu}, H_{\alpha,0}^{1,\mu}} - \int_0^T B^* \varphi_x(t,1)\,v(t)\,dt,
\end{equation}
where
$\varphi \in  C^0([0, T]; H_{\alpha,0}^{1,\mu}(0,1)^2)\cap H^1(0,T;L^2(0,1)^2)\cap L^2(0, T; H_{\alpha}^{2,\mu}(0,1)^2\cap H_{\alpha,0}^{1,\mu}(0,1)^2)$
is the solution of \eqref{nhadjoint-sys} associated
to $g$ and $\varphi_0=0$.
\end{definition}
With this definition we can state the result of existence and uniqueness of solution by transposition to system \eqref{bound-sys}.
\begin{proposition}\label{prop-transp}
Assume that $y_0 \in H_{\alpha}^{-1,\mu}(0, 1)^2$ and $v \in L^2(0, T)$. Then, system \eqref{bound-sys} admits a unique solution by transposition $y$ that satisfies
\begin{equation}\label{reg-faible}
\left\{
\begin{array}{lll}
y \in  L^2(Q)^2  \cap  C^0\big([0, T], H_{\alpha}^{-1,\mu}(0, 1)^2 \big),\\
y_t  \in L^2\big(0, T; (H_{\alpha}^{2,\mu}(0,1)^2\cap H_{\alpha,0}^{1,\mu}(0,1)^2)^{'}),  \\
y_t - (x^{\alpha}y_{x})_x - \frac{\mu}{x^{2-\alpha}}y = A y  \quad \text{in} \quad L^2\big(0, T; (H_{\alpha}^{2,\mu}(0,1)^2\cap H_{\alpha,0}^{1,\mu}(0,1)^2)^{'}) , \\
y(0,\cdot)= y_0 \quad \text{in} \quad  H_{\alpha}^{-1,\mu}(0, 1)^2
\end{array}
\right.
\end{equation}
and
\begin{equation}\label{energy-transp}
\begin{aligned}
&\|y \|_{L^{2}(Q)^2} + \|y\|_{C^0(H_{\alpha}^{-1,\mu})} + \|y_t\|_{ L^2((H_{\alpha}^{2,\mu}(0,1)^2\cap H_{\alpha,0}^{1,\mu}(0,1)^2)^{'})} \\
&\leq C \big(\|v \|_{L^2(0,T)} + \|y_0\|_{\alpha}^{-1,\mu} \big),
\end{aligned}
\end{equation}
for a constant $C=C(T)> 0$.
\end{proposition}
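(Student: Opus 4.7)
The plan is to construct $y$ by the classical transposition/Riesz procedure and then to upgrade the resulting $L^2(Q)^2$ information to the full regularity statement \eqref{reg-faible}--\eqref{energy-transp}. Given $y_0\in H^{-1,\mu}_{\alpha}(0,1)^2$ and $v\in L^2(0,T)$, I would introduce on $L^2(Q)^2$ the linear functional
\begin{equation*}
\Phi(g):=\langle y_0,\varphi(0,\cdot)\rangle_{H^{-1,\mu}_{\alpha},H^{1,\mu}_{\alpha,0}}-\int_0^T B^{*}\varphi_x(t,1)\,v(t)\,dt,
\end{equation*}
where, for each $g\in L^2(Q)^2$, $\varphi$ is the strict solution of the adjoint problem \eqref{nhadjoint-sys} associated to $\varphi_0=0$ and to source $g$ furnished by Proposition \ref{well-posed}. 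The bound \eqref{energy-sol} immediately controls the first term by $C\|y_0\|_{\alpha}^{-1,\mu}\|g\|_{L^2(Q)^2}$.

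The main technical obstacle is the \emph{hidden regularity} trace estimate
\begin{equation*}
\|\varphi_x(\cdot,1)\|_{L^2(0,T)^2}\le C\|g\|_{L^2(Q)^2}.
\end{equation*}
Because $x=1$ is a regular point of the spatial operator (both the degeneracy and the singularity live at $x=0$), this estimate is accessible through the classical multiplier argument: multiply each component of \eqref{nhadjoint-sys} by $x\varphi_x$ and integrate over $Q$. After integration by parts, the boundary term at $x=1$ produces $\tfrac{1}{2}\int_0^T|\varphi_x(t,1)|^2\,dt$; the boundary contribution at $x=0$ vanishes thanks to the weight $x^\alpha$ and the behaviour near $0$ of functions in $H^{1,\mu}_{\alpha,0}$, which is guaranteed by Hardy's inequality \eqref{hardyinequality}; the remaining interior terms are absorbed by $\|\varphi\|_{L^2(0,T;H^{1,\mu}_{\alpha,0}(0,1)^2)}$, $\|\varphi_t\|_{L^2(Q)^2}$ and $\|g\|_{L^2(Q)^2}$, all controlled by \eqref{energy-sol}. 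Granted this trace bound, $\Phi$ is continuous on $L^2(Q)^2$ with operator norm majorized by $C(\|y_0\|_{\alpha}^{-1,\mu}+\|v\|_{L^2(0,T)})$, and the Riesz representation theorem yields a unique $y\in L^2(Q)^2$ satisfying \eqref{def-transp} together with the $L^2(Q)^2$ part of \eqref{energy-transp}. Uniqueness of the transposition solution is immediate since \eqref{def-transp} is required for every $g\in L^2(Q)^2$.

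It remains to derive the extra regularity in \eqref{reg-faible}. Choosing $g(t,x)=\psi(t)\phi(x)$ with $\phi\in H^{2,\mu}_{\alpha}(0,1)^2\cap H^{1,\mu}_{\alpha,0}(0,1)^2$ and $\psi\in C^\infty_c(0,T)$ in \eqref{def-transp}, and transferring the spatial operator onto $\phi$ via two integrations by parts (the boundary terms vanish because $\phi(0)=\phi(1)=0$), shows that $y$ satisfies the PDE of \eqref{reg-faible} in the stated distributional sense and, by duality, gives the required bound for $y_t$ in $L^2\bigl(0,T;(H^{2,\mu}_{\alpha}(0,1)^2\cap H^{1,\mu}_{\alpha,0}(0,1)^2)'\bigr)$. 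To obtain the continuity in time $y\in C^0([0,T];H^{-1,\mu}_{\alpha}(0,1)^2)$ together with the initial condition $y(0,\cdot)=y_0$, I would approximate $(y_0,v)$ by smoother data $(y_0^n,v^n)$ for which \eqref{bound-sys} admits a strong solution $y^n$ (via semigroup theory for the scalar degenerate/singular operator, the bounded perturbation $A$ and a lifting of the Dirichlet datum at $x=1$); the previous transposition estimate applied on intervals $[0,t]$ shows that $(y^n(t,\cdot))$ is Cauchy in $H^{-1,\mu}_{\alpha}(0,1)^2$ uniformly in $t\in[0,T]$, so the limit $y$ lies in $C^0([0,T];H^{-1,\mu}_{\alpha}(0,1)^2)$ and inherits $y(0,\cdot)=y_0$. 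Collecting all the bounds yields \eqref{energy-transp}.
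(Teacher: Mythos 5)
Your overall architecture (Riesz representation for the transposition functional, then approximation by smooth data to upgrade the regularity) is the same as the paper's, and most steps are sound. The trace bound $\|\varphi_x(\cdot,1)\|_{L^2(0,T)^2}\le C\|g\|_{L^2(Q)^2}$ that you derive by a multiplier argument is in fact already contained in \eqref{energy-sol}, since $\varphi\in L^2(0,T;H^{2,\mu}_{\alpha}(0,1)^2)$ and $H^{2,\mu}_{\alpha}$ embeds into $H^2$ near the regular endpoint $x=1$; your multiplier computation is an acceptable alternative derivation, not a gap. Likewise, your route to $y\in C^0([0,T];H^{-1,\mu}_{\alpha}(0,1)^2)$ (uniform-in-$t$ Cauchy estimate for the approximating strong solutions via duality on $[0,t]$) is a legitimate substitute for the paper's argument, which instead deduces continuity from $y\in L^2(Q)^2$, $y_t\in L^2(0,T;(H^{2,\mu}_{\alpha}\cap H^{1,\mu}_{\alpha,0})')$ and the Lions--Magenes interpolation identity $[L^2,(H^{2,\mu}_{\alpha}\cap H^{1,\mu}_{\alpha,0})']_{1/2}=H^{-1,\mu}_{\alpha}$.

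There is, however, one genuine error: in the step where you test against $g=\psi(t)\phi(x)$ with $\phi\in H^{2,\mu}_{\alpha}(0,1)^2\cap H^{1,\mu}_{\alpha,0}(0,1)^2$ and integrate by parts twice, you assert that ``the boundary terms vanish because $\phi(0)=\phi(1)=0$.'' They do not. The double integration by parts produces two boundary contributions, $\big[x^{\alpha}y_x\phi\big]_0^1$ and $-\big[x^{\alpha}y\,\phi_x\big]_0^1$; the first vanishes since $\phi(0)=\phi(1)=0$, but the second contributes $-y(t,1)\cdot\phi_x(1)=-Bv(t)\cdot\phi_x(1)$, which is nonzero precisely because the control enters through the Dirichlet datum at $x=1$. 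This term is essential: it is what carries $\|v\|_{L^2(0,T)}$ into the bound for $(x^{\alpha}y_x)_x+\frac{\mu}{x^{2-\alpha}}y$ and hence for $y_t$ in $L^2(0,T;(H^{2,\mu}_{\alpha}(0,1)^2\cap H^{1,\mu}_{\alpha,0}(0,1)^2)')$, and if you drop it the identity in \eqref{reg-faible} fails for every test function with $\phi_x(1)\neq 0$. The paper's proof keeps this term explicitly in the identity
\begin{equation*}
\int\!\!\!\!\!\int_{Q}\Big((x^{\alpha}y^m_x)_x+\tfrac{\mu}{x^{2-\alpha}}y^m\Big)\cdot\psi
=\int\!\!\!\!\!\int_{Q}y^m\cdot\Big((x^{\alpha}\psi_x)_x+\tfrac{\mu}{x^{2-\alpha}}\psi\Big)-\int_0^T B^{*}\psi_x(t,1)\,v^m(t)\,dt,
\end{equation*}
which is how the uniform bound in the dual space is obtained before passing to the limit. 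A related technical point: since $y$ is only known to lie in $L^2(Q)^2$, you cannot integrate by parts on $y$ directly; you must perform the computation on the smooth approximations $y^m$ (whose boundary value at $x=1$ is $Bv^m(t)$) and pass to the limit, exactly as the paper does. Your proof should be corrected by reinstating this boundary term and carrying out the integration by parts at the level of the approximating strong solutions.
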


\begin{proof}
\smallskip
Let $y_0 \in H_{\alpha}^{-1,\mu}(0, 1)^2 $, $ v \in L^2(0, T)$ and consider the following functional $ \mathcal{T}: L^2(Q)^2 \rightarrow \mathbb{R}$
given by
\begin{equation*}
\mathcal{T}(g) = \langle y_0 , \varphi(0, \cdot) \rangle_{H_{\alpha}^{-1,\mu}, H_{\alpha,0}^{1,\mu}} - \int_0^T B^* \varphi_x(t,1) v(t)\;dt,
\end{equation*}
where
$
\varphi \in  C^0([0, T]; H_{\alpha,0}^{1,\mu}(0,1)^2)\cap H^1(0,T;L^2(0,1)^2)\cap L^2(0, T; H_{\alpha}^{2,\mu}(0,1)^2\cap H_{\alpha,0}^{1,\mu}(0,1)^2)
$
is the solution of the adjoint system \eqref{nhadjoint-sys} associated to $g \in L^2(Q)^2$ and $\varphi_0=0$. From \eqref{energy-sol},
we can deduce the existence of a positive constant $C$ such that
\begin{equation*}
\big|\mathcal{T}(g) \big| \leq C \big( \| v \|_{L^2(0,T)} + \|y_0\|_{\alpha}^{-1,\mu}  \big) \|g\|_{L^2(Q)^2},
\end{equation*}
for all $g \in L^2(Q)^2$.
We infer that $\mathcal{T}$ is bounded.
Hence, by Riesz-Fr\'echet representation theorem, there exists a unique $ y \in L^2(Q)^2$ satisfying
\eqref{def-transp}, i.e., a solution by transposition of \eqref{bound-sys} in the sense of Definition \ref{transp}.
It is also clear that this solution satisfies the equality
$y_t - (x^{\alpha}y_{x})_x -\frac{\mu}{x^{2-\alpha}} y = A y$ in $\mathcal{D}^{'}(Q)^2$ and the estimate
\begin{equation*}
\| y \|_{L^2(Q)^2}= \|\mathcal{T}\| \leq C \big(\| v \|_{L^2(0,T)} +  \|y_0\|_{\alpha}^{-1,\mu} \big).
\end{equation*}
Next, we are going to prove that the solution $y$ of system \eqref{bound-sys} is more regular. To be precise, let us show that
$(x^{\alpha}y_{x})_x +\frac{\mu}{x^{2-\alpha}}y \in  L^2\big(0, T; (H_{\alpha}^{2,\mu}(0,1)^2\cap H_{\alpha,0}^{1,\mu}(0,1)^2)^{'})$ and
\begin{equation}\label{bounddiv}
\| (x^{\alpha}y_{x})_x +\frac{\mu}{x^{2-\alpha}}y \|_{L^2((H_{\alpha}^{2,\mu}(0,1)^2\cap H_{\alpha,0}^{1,\mu}(0,1)^2)^{'})} \leq C \big(\| v \|_{L^2(0,T)} + \|y_0\|_{\alpha}^{-1,\mu} \big).
\end{equation}
To this end, let us consider two sequences $\{y_0^m \}_{m \geq 1} \subset H_{\alpha,0}^{1,\mu}(0,1)^2$ and $\{v^m \}_{m\geq1} \subset H_{0}^{1}(0, T)$ such that
\begin{equation*}
y_0^m \rightarrow y_0 \quad \text{in} \quad H^{-1,\mu}(0,1)^2 \quad \text{and} \quad  v^m \rightarrow v \quad \text{in} \quad L^2(0,T).
\end{equation*}
Now, the strategy consists in transforming our original system \eqref{bound-sys} (as done for instance in \cite{biccsantavan2020} in
the context of a scalar degenerate/singular parabolic equation) into a problem with homogeneous boundary conditions and a source term.
To this end, let us introduce the following function:
\begin{equation*}
\forall x\in[0,1],\qquad p(x):=x^{q^\mu_\alpha} \qquad \text{where}\qquad q_\alpha^\mu:= \frac{1-\alpha}{2}+ \sqrt{\mu(\alpha)-\mu}.
\end{equation*}
Formally, if $y^m$ is the solution of \eqref{bound-sys} associated to $y_0^m$ and $v^m$, then the function defined by
\begin{equation*}
 \tilde{y}^m(t,x)= y^m(t,x) - B p(x) v^m(t),
\end{equation*}
is solution of
\begin{equation}\label{tilde-sys}
\left\{
\begin{array}{lll}
\tilde{y}_t^m - (x^{\alpha}\tilde{y}_{x}^m)_x - \frac{\mu}{x^{2-\alpha}}\tilde{y}^m  = A \tilde{y}^m + \tilde{f}^m(t,x), & &  (t,x) \in Q, \\
\tilde{y}^m(t, 0) =\tilde{y}^m(t,1)= 0, & & t \in (0,T),\\
\tilde{y}^m(0,x)= y_{0}^m(x),  & &  x\in (0, 1),
\end{array}
\right.
\end{equation}
where $\tilde{f}^m(t,x) =  p(x) v^m(t) A B - p(x) v_t^m(t) B \in L^2(Q)^2$.
With the previous regularity assumptions on the data, we can apply Proposition \ref{well-posed}, to deduce that system \eqref{tilde-sys} has a unique
strict solution
\begin{equation*}
\begin{aligned}
\tilde{y}^m \in  C^0([0, T]; H_{\alpha,0}^{1,\mu}(0,1)^2)&\cap H^1(0,T;L^2(0,1)^2)\\
&\cap L^2(0, T; H^{2,\mu}(0,1)^2\cap H_0^{1,\mu}(0,1)^2).
\end{aligned}
\end{equation*}
By setting
$$
\tilde{v}^m(t,x):= B p(x) v^m(t),
$$
we observe that $\tilde{v}^m$ satisfies
\begin{equation*}
\tilde{v}^m \in  C^0([0, T]; H_{\alpha}^{1,\mu}(0,1)^2)\cap H^1(0,T;L^2(0,1)^2)\cap L^2(0, T; H_{\alpha}^{2,\mu}(0,1)^2).
\end{equation*}
Therefore, the problem \eqref{bound-sys} for $v^m$ and  $y_0^m$ has a unique solution
$$y^m \in C^0([0, T]; H_{\alpha}^{1,\mu}(0,1)^2)\cap H^1(0,T;L^2(0,1)^2)\cap L^2(0, T; H_{\alpha}^{2,\mu}(0,1)^2)$$
which satisfies
\begin{equation*}
\int\!\!\!\!\!\int_{Q} y^m \cdot g \; dt dx  = \langle y_0^m , \varphi(0,x) \rangle_{H_{\alpha}^{-1,\mu}, H_{\alpha,0}^{1,\mu}} - \int_0^T B^*\varphi_x (t,1) v^m(t)\,dt, \quad \forall m \geq 1,
\end{equation*}
for all $ g \in L^2(Q)^2$, where $\varphi$ is the solution of the system \eqref{nhadjoint-sys} associated to $g$ and $\varphi_0=0$. The previous identity and \eqref{def-transp} also provide:
\begin{equation}\label{estym}
\left\{
\begin{array}{lll}
\|y^m\|_{L^2(Q)^2} \leq C \big(\| v \|_{L^2(0,T)} + \|y_0\|_{\alpha}^{-1,\mu} \big) \\
y^m \rightarrow y \quad \text{in} \; L^2(Q)^2 \quad \text{and} \quad  (x^{\alpha}y_{x}^m)_x +\frac{\mu}{x^{2-\alpha}}y^m  \rightarrow (x^{\alpha}y_{x})_x +\frac{\mu}{x^{2-\alpha}} y \quad \text{in} \; \mathcal{D}^{'}(Q)^2.
\end{array}
\right.
\end{equation}
On the other hand, integrations by parts lead to
\begin{equation*}
\int\!\!\!\!\!\int_{Q} \big((x^{\alpha}y_{x}^m)_x +\frac{\mu}{x^{2-\alpha}}y^m \big)  \cdot  \psi \,dt=
\int\!\!\!\!\!\int_{Q} y^m \cdot \big((x^{\alpha}\psi_{x})_x +\frac{\mu}{x^{2-\alpha}}\psi \big)\,dt\,dx - \int_0^T B^*\psi_x (t,1)\,v^m(t)\,dt,
\end{equation*}
for every $ \psi \in L^2\big(0, T; H_{\alpha}^{2,\mu}(0,1)^2\cap H_{\alpha,0}^{1,\mu}(0,1)^2)$. From this equality we deduce
that the sequence $ \{(x^{\alpha}y_{x}^m)_x +\frac{\mu}{x^{2-\alpha}}y^m \}_{m\geq1}$ is bounded in $L^2\big(0, T; (H_{\alpha}^{2,\mu}(0,1)^2\cap H_{\alpha,0}^{1,\mu}(0,1)^2)^{'})$.
This property together with \eqref{estym} implies that
$(x^{\alpha}y_{x})_x +\frac{\mu}{x^{2-\alpha}}y \in L^2\big(0, T; (H_{\alpha}^{2,\mu}(0,1)^2\cap H_{\alpha,0}^{1,\mu}(0,1)^2)^{'})$ and satisfies the estimate \eqref{bounddiv}.

Combining the identity $y_t = (x^{\alpha}y_{x})_x +\frac{\mu}{x^{2-\alpha}}y + A y$
and the regularity property for $(x^{\alpha}y_{x})_x +\frac{\mu}{x^{2-\alpha}}y$, we also see
that $y_t \in  L^2\big(0, T; (H_{\alpha}^{2,\mu}(0,1)^2\cap H_{\alpha,0}^{1,\mu}(0,1)^2)^{'})$ and
$$\|y_t \|_{L^2((H_{\alpha}^{2,\mu}(0,1)^2\cap H_{\alpha,0}^{1,\mu}(0,1)^2)^{'})} \leq C \big(\| v \|_{L^2(0,T)} + \|y_0\|_{\alpha}^{-1,\mu} \big),$$
for some positive constant $C$.
Therefore $ y \in C([0, T] ; X^2) $, where $X$ is the interpolation space
$X = [L^2(0,1),  (H_{\alpha}^{2,\mu}(0,1)^2\cap H_{\alpha,0}^{1,\mu}(0,1))^{'}]_{1/2} = H_{\alpha}^{-1,\mu}(0,1)$ (see \cite[Proposition 2.1, p. 22]{Lions68}).
In conclusion, we get
\begin{equation*}
\|y \|_{C(H_{\alpha}^{-1,\mu})} \leq C \big(\| v \|_{L^2(0,T)} + \|y_0 \|_{\alpha}^{-1,\mu} \big).
\end{equation*}
Finally, one can easily check that $ y(0,\cdot) =y_0$ in $H_{\alpha}^{-1,\mu}(0,1)^2$. This ends the proof.
\end{proof}

\subsection{Duality}

Let us conside the adjoint system of \eqref{bound-sys} given by:
\begin{equation}\label{adjoint-sys}
\left\{
\begin{array}{lll}
- \varphi_t - (x^\alpha\varphi_{x})_{x} - \frac{\mu}{x^{2-\alpha}}\varphi = A^* \varphi ,  &  & \text{in} \; Q, \\
\varphi(t, 0)=\varphi(t,1) = 0, & & t \in (0, T),\\
\varphi(T,x)= \varphi_0,  & & \text{in}\; (0, 1),
\end{array}
\right.
\end{equation}
where $\varphi_0\in H_{\alpha,0}^{1,\mu}(0,1)^2$.
In the sequel, the solution to \eqref{adjoint-sys} will be called the adjoint state associated
to $\varphi_0$.
The controllability of system \eqref{bound-sys} can be characterized in terms of appropriate properties of the
solutions to \eqref{adjoint-sys}.
In order to provide these characterizations, we use the following result
which relates the solutions of systems \eqref{bound-sys} and \eqref{adjoint-sys}. One has:
\begin{proposition}\label{mainrelation}
Let $y_0\in H^{-1,\mu}_{\alpha}(0,1)^2$, $v \in L^2(0,T)$ and $\varphi_0\in H_{\alpha,0}^{1,\mu}(0,1)^2$ be given.
Let $y$ be the state associated
to $y_0$ and $v$ and let $\varphi$ be the adjoint state associated to $\varphi_0$. Then:
\begin{equation}\label{relationship}
\int_0^T B^*(x^\alpha \varphi_x)(t, 1) v(t)\,dt = \langle y_0 , \varphi(0, \cdot) \rangle_{H^{-1,\mu}_{\alpha}, H_{\alpha,0}^{1,\mu}}
-  \langle y(T) , \varphi_0 \rangle_{H^{-1,\mu}_{\alpha}, H_{\alpha,0}^{1,\mu}}.
\end{equation}
\end{proposition}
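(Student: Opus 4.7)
The strategy is to establish \eqref{relationship} first for smooth data, where $y$ is a strict solution and every integration by parts is rigorously justified, and then to pass to the limit using the continuity estimates of Propositions \ref{well-posed} and \ref{prop-transp}.

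First I would regularize the data exactly as in the proof of Proposition \ref{prop-transp}: choose sequences $\{y_0^m\}\subset H_{\alpha,0}^{1,\mu}(0,1)^2$ and $\{v^m\}\subset H^1_0(0,T)$ with $y_0^m\to y_0$ in $H_\alpha^{-1,\mu}(0,1)^2$ and $v^m\to v$ in $L^2(0,T)$. By the lifting $\tilde y^m = y^m - Bp(x)v^m(t)$ used in that proof, the corresponding state $y^m$ is a strict solution living in
\begin{equation*}
C^0([0,T];H_\alpha^{1,\mu}(0,1)^2)\cap H^1(0,T;L^2(0,1)^2)\cap L^2(0,T;H_\alpha^{2,\mu}(0,1)^2).
\end{equation*}
For such $y^m$ and the strict adjoint solution $\varphi$ given by Proposition \ref{well-posed}, I would multiply the equation satisfied by $y^m$ by $\varphi$, integrate over $Q$, and integrate by parts once in $t$ and twice in $x$. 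The time integration produces the bracket $\langle y^m(T),\varphi_0\rangle-\langle y_0^m,\varphi(0,\cdot)\rangle$. The two spatial integrations by parts produce two boundary contributions: at $x=1$ the term $[x^\alpha y^m_x\cdot\varphi]$ vanishes because $\varphi(t,1)=0$, while the term $[y^m\cdot x^\alpha \varphi_x]_{x=1}$ equals $v^m(t)B^*(x^\alpha\varphi_x)(t,1)$ since $y^m(t,1)=Bv^m(t)$; at $x=0$ both bracket terms vanish because $\varphi(t,0)=0$, $y^m(t,0)=0$, and the membership of $y^m$ and $\varphi$ in $H_\alpha^{2,\mu}(0,1)^2$ forces $x^\alpha y^m_x$ and $x^\alpha \varphi_x$ to be well behaved at the singular endpoint (this is where the Hardy inequality \eqref{hardyinequality} and the very definition of $H_\alpha^{2,\mu}$ are used). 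After collecting terms and using that $\varphi$ solves $-\varphi_t-(x^\alpha\varphi_x)_x-\tfrac{\mu}{x^{2-\alpha}}\varphi = A^*\varphi$, all interior contributions cancel and I obtain exactly
\begin{equation*}
\int_0^T B^*(x^\alpha\varphi_x)(t,1)\,v^m(t)\,dt=\langle y_0^m,\varphi(0,\cdot)\rangle-\langle y^m(T),\varphi_0\rangle.
\end{equation*}

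Finally, I would pass to the limit $m\to\infty$. The left-hand side converges because $v^m\to v$ in $L^2(0,T)$ and $B^*(x^\alpha\varphi_x)(\cdot,1)\in L^2(0,T)$ thanks to the regularity of $\varphi$ from \eqref{energy-sol}. For the right-hand side, the estimate \eqref{energy-transp} in Proposition \ref{prop-transp} guarantees that $y^m\to y$ in $C^0([0,T];H_\alpha^{-1,\mu}(0,1)^2)$, so $\langle y_0^m,\varphi(0,\cdot)\rangle$ and $\langle y^m(T),\varphi_0\rangle$ converge to the corresponding dualities with $y_0$ and $y(T)$. This yields \eqref{relationship}.

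The main obstacle is the justification that the boundary contributions at the singular endpoint $x=0$ really vanish in the pre-limit computation, since $x^\alpha y^m_x$ and $x^\alpha \varphi_x$ are not a priori continuous up to $0$. This is handled by working with the strict regularity $H_\alpha^{2,\mu}(0,1)^2$ provided by Proposition \ref{well-posed}, together with the Hardy-type control \eqref{hardyinequality}; once this is secured, the rest is a standard density argument based on the energy estimates already at hand.
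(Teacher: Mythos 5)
Your proposal is correct. The paper gives no argument here beyond the remark that the identity is ``a straightforward consequence of the properties of $y$ stated in Proposition \ref{prop-transp}'', and your regularization--integration-by-parts--passage-to-the-limit scheme is precisely the standard way to substantiate that remark: it reuses the approximating sequences $y_0^m$, $v^m$ and the lifted strict solutions already constructed in the proof of Proposition \ref{prop-transp}, the boundary bookkeeping at $x=1$ and $x=0$ is right, and the limit passage is justified by \eqref{energy-transp} together with the fact that $(x^\alpha\varphi_x)(\cdot,1)\in L^2(0,T)$, which the paper's framework already presupposes in Definition \ref{transp}.
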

This result is a straightforward consequence of the properties of $y$ stated in Proposition \ref{prop-transp}.

One important result that will be useful for treating the approximate controllability of the system \eqref{bound-sys} is the following characterization
in terms of the unique continuation property for the corresponding adjoint system \eqref{adjoint-sys}. More precisely, we have:
\begin{theorem}\label{approxnullcontrol}
Let us consider $T > 0$. Then,
system \eqref{bound-sys} is approximately controllable at time $T$ if and only if for all initial condition $\varphi_0\in H_{\alpha,0}^{1,\mu}(0,1)^2$
the solution to system \eqref{adjoint-sys} satisfies the unique continuation property
\begin{equation*}
B^* (x^\alpha \varphi_x)(\cdot,1) = 0  \quad \text{on} \quad  (0,T) \Rightarrow \varphi_0 = 0 \quad \text{in} \quad (0,1) \quad
(\text{i.e.},\quad \varphi = 0 \quad \text{in} \quad Q).
\end{equation*}
\end{theorem}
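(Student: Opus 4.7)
My plan is to use the standard duality argument (via Hahn--Banach) that relates approximate controllability with a uniqueness property for the adjoint system, employing the relation \eqref{relationship} from Proposition~\ref{mainrelation} to translate the ``orthogonality to the reachable set'' into the desired trace identity.

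First, by linearity of system \eqref{bound-sys}, approximate controllability at time $T$ is equivalent to density in $H^{-1,\mu}_{\alpha}(0,1)^{2}$ of the reachable set from the origin,
\begin{equation*}
R(T):=\bigl\{y(T;0,v)\;:\;v\in L^{2}(0,T)\bigr\},
\end{equation*}
where $y(\cdot;0,v)$ denotes the solution to \eqref{bound-sys} with $y_{0}=0$. Indeed, subtracting the free evolution of $y_{0}$ reduces the definition of approximate controllability to the question of whether $R(T)$ is dense in $H^{-1,\mu}_{\alpha}(0,1)^{2}$. The identification of the topological dual of $H^{-1,\mu}_{\alpha}(0,1)^{2}$ with $H^{1,\mu}_{\alpha,0}(0,1)^{2}$ (which holds by construction of these spaces with $L^{2}(0,1)^{2}$ as pivot) will then allow me to apply the Hahn--Banach theorem.

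Next, I invoke Hahn--Banach: $R(T)$ is dense in $H^{-1,\mu}_{\alpha}(0,1)^{2}$ if and only if every $\varphi_{0}\in H^{1,\mu}_{\alpha,0}(0,1)^{2}$ satisfying
\begin{equation*}
\langle y(T;0,v),\varphi_{0}\rangle_{H^{-1,\mu}_{\alpha},H^{1,\mu}_{\alpha,0}}=0\qquad\text{for every } v\in L^{2}(0,T)
\end{equation*}
must vanish. Fix such a $\varphi_{0}$ and let $\varphi$ be the corresponding adjoint state, i.e.\ the solution of \eqref{adjoint-sys}. Applying Proposition~\ref{mainrelation} with $y_{0}=0$ yields
\begin{equation*}
\int_{0}^{T}B^{*}(x^{\alpha}\varphi_{x})(t,1)\,v(t)\,dt=-\langle y(T;0,v),\varphi_{0}\rangle_{H^{-1,\mu}_{\alpha},H^{1,\mu}_{\alpha,0}}
\end{equation*}
for all $v\in L^{2}(0,T)$. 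Therefore the orthogonality condition above is equivalent to
\begin{equation*}
\int_{0}^{T}B^{*}(x^{\alpha}\varphi_{x})(t,1)\,v(t)\,dt=0\qquad\forall v\in L^{2}(0,T),
\end{equation*}
which by arbitrariness of $v$ forces $B^{*}(x^{\alpha}\varphi_{x})(\cdot,1)=0$ in $L^{2}(0,T)$. Piecing the equivalences together shows that $R(T)$ is dense if and only if the implication ``$B^{*}(x^{\alpha}\varphi_{x})(\cdot,1)=0$ on $(0,T)\Rightarrow \varphi_{0}=0$'' holds for every $\varphi_{0}\in H^{1,\mu}_{\alpha,0}(0,1)^{2}$. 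Finally, the parenthetical ``$\varphi=0$ in $Q$'' is just the backward uniqueness statement for the well-posed problem \eqref{adjoint-sys}: $\varphi_{0}=0$ as a final datum gives $\varphi\equiv 0$, and conversely $\varphi\equiv 0$ forces $\varphi_{0}=\varphi(T,\cdot)=0$.

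The only delicate point is to be sure that the identity \eqref{relationship} remains valid in the present low-regularity setting, but this is exactly the content of Proposition~\ref{mainrelation}, which in turn relies on Proposition~\ref{prop-transp}. Beyond that, the argument is the usual Hahn--Banach scheme for linear control systems and requires no new estimates.
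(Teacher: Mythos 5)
Your proof is correct and is precisely the standard Hahn--Banach duality argument that the paper itself omits, deferring instead to the references \cite{FBT}, \cite{coron} and \cite{Zab1995}: reduce to density of the reachable set from the origin, test against $\varphi_0\in H^{1,\mu}_{\alpha,0}(0,1)^2$ via the duality identity \eqref{relationship} with $y_0=0$, and conclude by arbitrariness of $v$. The one point you rightly flag --- that $B^*(x^\alpha\varphi_x)(\cdot,1)$ makes sense in $L^2(0,T)$ --- is indeed covered by the regularity in Proposition \ref{well-posed}, so nothing is missing.
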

This result is well known. For a proof see, for instance \cite{FBT}, \cite{coron} and \cite{Zab1995}.

\section{Spectral analysis}\label{Section-eigenvalue}
In order to transform the question of null controllability
into a moment problem, we need to study the eigenvalue problem of the degenerate/singular operator associated to system \eqref{bound-sys}. To this end, we first recall the spectral properties of scalar degenerate/singular operators.
\subsection{Scalar degenerate/singular operators}
In this section, we discuss some preliminary results related to a spectral analysis of the operator $y \mapsto  - (x^\alpha y_x)_x - \frac{\mu}{x^{2-\alpha}}y$, i.e.,
the nontrivial solutions $(\lambda, \Phi)$ of
\begin{equation}\label{eigenvaluesproblem}
\begin{cases}
& -(x^\alpha \Phi')'(x) - \frac{\mu}{x^{2-\alpha}} \Phi(x) = \lambda \Phi(x),  \quad x \in (0,1), \\
&\Phi(0)=\Phi(1)=0,
\end{cases}
\end{equation}
that will be essential for our purposes.
For this reason, we first recall some results concerning the Bessel functions that will be useful in the rest of the paper (see \cite{Watson}, for more details).

For a real number $\nu \in \mathbb{R}_{+}$, we denote by $J_{\nu}$ the Bessel function of the first kind of order $\nu$
defined by the following Taylor series expansion around $x=0$:
$$
J_{\nu} (x) = \sum_{m\geq 0}  \frac{(-1)^{m}}{m! \;\Gamma(1+\nu+m)} \Big(\frac{x}{2}\Big)^{2m+\nu},
$$
where $\Gamma(.)$ is the Gamma function.

We recall that the Bessel function $J_{\nu}$ satisfies the following differential
equation
$$
x^2 y''(x) + x y'(x) + (x^2 - \nu^2) y(x) = 0, \qquad  x\in (0, + \infty).
$$
Moreover, the function $J_{\nu}$ has an infinite number of real zeros which are
simple with the possible exception of $x=0$ (see \cite{Lebedev,Elbert2001}). We denote by $(j_{\nu,n})_{n\geq 1}$ the strictly
increasing sequence of the positive zeros of $J_{\nu}$:
$$ 0<j_{\nu,1} < j_{\nu,2} < \cdots<j_{\nu,n}< \cdots$$
and we recall that
$$j_{\nu,n} \rightarrow +\infty\quad \text{as}\quad n\rightarrow +\infty$$
and the following bounds on the zeros $j_{\nu, n}$, which are provided in \cite{LM2008}:
\begin{itemize}
\item $\forall \nu \in \Big[0, \dfrac{1}{2}\Big],\, \forall n \geq 1$,
\begin{equation}\label{boundcase1}
\big( n + \frac{\nu}{2} - \frac{1}{4} \big) \pi \leq j_{\nu, n } \leq \big( n + \frac{\nu}{4} - \frac{1}{8} \big) \pi.
\end{equation}
\item $\forall \nu \geq \dfrac{1}{2},\, \forall n \geq 1$,
\begin{equation}\label{boundcase2}
\big( n + \frac{\nu}{4} - \frac{1}{8} \big) \pi \leq j_{\nu, n } \leq \big( n + \frac{\nu}{2} - \frac{1}{4} \big) \pi.
\end{equation}
\end{itemize}
In our analysis, we will need the following classical result (see \cite[Proposition 7.8]{KL2005}):
\begin{lemma}\label{lemmadifference}
Let $j_{\nu,n}, n\geq 1$ be the positive zeros of the Bessel function $J_{\nu}$. Then, the
following hold:
\begin{itemize}
\item If $\nu \in \Big[0, \dfrac{1}{2}\Big]$, the difference sequence ($ j_{\nu, n+1} - j_{\nu, n})_{n}$ is nondecreasing and
converges to $\pi$ as $n \longrightarrow +\infty.$
\item If $\nu \geq \dfrac{1}{2}$, the sequence $(j_{\nu, n+1} - j_{\nu, n})_{n}$ is nonincreasing and
converges to $\pi$ as $n \longrightarrow +\infty$.
\end{itemize}
\end{lemma}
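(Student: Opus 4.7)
The plan is to recast the Bessel equation satisfied by $J_\nu$ into Liouville (Sturm--Liouville) normal form and to derive both the monotonicity and the limit from a classical Sturm convexity argument. Setting
\[
u(x) := \sqrt{x}\, J_\nu(x), \qquad x > 0,
\]
a direct computation starting from the Bessel equation shows that $u$ solves
\[
u''(x) + q_\nu(x)\, u(x) = 0, \qquad q_\nu(x) := 1 - \frac{\nu^2 - 1/4}{x^2}.
\]
Since $\sqrt{x} > 0$ on $(0,+\infty)$, the positive zeros of $u$ coincide exactly with the sequence $(j_{\nu,n})_{n\geq 1}$, so it suffices to study the spacings of consecutive zeros of $u$.

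The monotonicity of $(j_{\nu,n+1} - j_{\nu,n})_n$ will come from the sign of $q_\nu'(x) = 2(\nu^2 - 1/4)/x^3$. When $\nu \geq 1/2$, the coefficient $q_\nu$ is nondecreasing on $(0,+\infty)$; when $\nu \in [0, 1/2]$, it is nonincreasing. I would then invoke the classical Sturm convexity theorem for equations of the form $u'' + q(x) u = 0$: given three consecutive zeros $a_1 < a_2 < a_3$ of $u$, if $q$ is increasing then $a_3 - a_2 \leq a_2 - a_1$, while if $q$ is decreasing then $a_3 - a_2 \geq a_2 - a_1$. The proof amounts to considering the shifted solution $\tilde u(x) := u(x - (a_2 - a_1))$, which vanishes at $a_2$ and at $a_2 + (a_2 - a_1)$ and satisfies $\tilde u'' + q(x - (a_2-a_1))\, \tilde u = 0$; monotonicity of $q$ makes this coefficient comparable to $q(x)$ on $(a_2, a_3)$, and Sturm's comparison theorem then locates the next zero of $\tilde u$ inside or outside $(a_2, a_3)$, as needed. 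Applied with $q = q_\nu$, this yields the asserted monotonicity in each of the two cases, with the degenerate value $\nu = 1/2$ (where $q_\nu \equiv 1$ and $u$ is a pure sinusoid) fitting both statements.

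For the limit $j_{\nu, n+1} - j_{\nu, n} \to \pi$, I would use the fact that $q_\nu(x) \to 1$ as $x \to +\infty$: for every $\varepsilon > 0$ there exists $X_\varepsilon > 0$ such that $1 - \varepsilon \leq q_\nu(x) \leq 1 + \varepsilon$ for $x \geq X_\varepsilon$, and Sturm's comparison with the constant-coefficient equations $w'' + (1 \pm \varepsilon) w = 0$ then gives $\pi/\sqrt{1+\varepsilon} \leq j_{\nu,n+1} - j_{\nu,n} \leq \pi/\sqrt{1-\varepsilon}$ as soon as $j_{\nu,n} \geq X_\varepsilon$. Combined with the monotonicity already established, the sequence is monotone and bounded, hence converges, and letting $\varepsilon \to 0$ forces the limit to equal $\pi$. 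Alternatively, one may simply invoke McMahon's asymptotic $j_{\nu, n} = (n + \nu/2 - 1/4)\pi + O(1/n)$ to read off the limit directly. The main obstacle in this plan is the Sturm convexity step; the Liouville substitution is routine and the pinching that identifies the limit as $\pi$ is standard once the spacings are known to be bounded.
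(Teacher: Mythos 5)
The paper does not prove this lemma at all: it is quoted as a classical fact with a pointer to \cite[Proposition 7.8]{KL2005}, so there is no in-paper argument to compare against. Your proposal is a correct proof, and it is essentially the standard one that the cited reference relies on: the Liouville substitution $u=\sqrt{x}\,J_\nu(x)$ gives $u''+\bigl(1-\frac{\nu^2-1/4}{x^2}\bigr)u=0$, the sign of $\nu^2-\tfrac14$ determines the monotonicity of the coefficient, the shifted-solution Sturm comparison gives the monotonicity of the gaps in the right direction in each case (with $\nu=\tfrac12$ degenerating to a pure sinusoid with constant gap $\pi$), and the pinching against $w''+(1\pm\varepsilon)w=0$ identifies the limit as $\pi$. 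All the steps you flag as needing care (the convexity step and the identification of consecutive zeros of the shifted solution) do go through as you describe, so there is no gap.
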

We also have that the Bessel functions enjoy the following integral formula (see \cite{Watson}):
$$
\int_0^1 x  J_{\nu} (j_{\nu, n} x)  J_{\nu}(j_{\nu, m} x)\,dx = \frac{\delta_{nm}}{2} [J_{\nu}^{'}(j_{\nu, n})]^2, \quad n,m \in \mathbb{N}^{*},
$$
where, $\delta_{nm}$ is the Kronecker symbol.

Next, given $\mu\leq \mu(\alpha)$, let us introduce the quantity
\begin{equation*}
\nu(\alpha,\mu):= \dfrac{2}{2-\alpha} \sqrt{\Big( \frac{1-\alpha}{2} \Big)^2-\mu}=\dfrac{2}{2-\alpha} \sqrt{\mu(\alpha)-\mu}.
\end{equation*}
With the previous notation, we have the following result on the expression of the
eigenvalues and eigenfunctions related to problem \eqref{eigenvaluesproblem}
that have been computed in \cite{biccsantavan2020}:
\begin{proposition}\label{spectrum} Assume $\mu\leq  \mu(\alpha)$.
Then the admissible eigenvalues $\lambda$ for problem \eqref{eigenvaluesproblem} are given by
\begin{equation}\label{eigenvalues}
\lambda_{\alpha, \mu, n} = \Big( \dfrac{2-\alpha}{2}  \Big)^2 (j_{\nu(\alpha,\mu),n})^2, \qquad  \forall n \geq 1.
\end{equation}
and the associated normalized (in $L^2(0,1)$) eigenfunctions are
\begin{equation}\label{eigenfunctions}
\Phi_{\alpha,\mu, n}(x) =  \frac{\sqrt{2-\alpha}}{|J_{\nu(\alpha,\mu)}'(j_{\nu(\alpha,\mu), n})|}
x^{\frac{1-\alpha}{2}} J_{\nu(\alpha,\mu)}\Big(j_{\nu(\alpha,\mu), n}x^{\frac{2-\alpha}{2}}\Big), \quad n \geq 1 .
\end{equation}
Moreover, the family $(\Phi_{\alpha,\mu, n})_{n\geq 1} $ forms an orthonormal basis of $L^2(0,1)$.
\end{proposition}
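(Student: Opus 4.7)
The strategy is to convert the singular Sturm--Liouville problem \eqref{eigenvaluesproblem} into a classical Bessel equation by an explicit change of variables, extract the eigenvalues from the boundary condition at $x=1$, and then recover the normalization and completeness from the standard results on Bessel functions already collected above.

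First I would set $z = x^{(2-\alpha)/2}$ and look for solutions of the form $\Phi(x) = x^{(1-\alpha)/2} u(z)$. A direct (if slightly tedious) computation of $\Phi'$, $(x^\alpha \Phi')'$ and $\mu x^{\alpha-2}\Phi$ reduces \eqref{eigenvaluesproblem} to
\begin{equation*}
z^2 u''(z) + z u'(z) + \left( \frac{4\lambda}{(2-\alpha)^2} z^2 - \nu(\alpha,\mu)^2 \right) u(z) = 0,
\end{equation*}
which, after the affine rescaling $w = \frac{2\sqrt{\lambda}}{2-\alpha} z$, is precisely Bessel's equation of order $\nu = \nu(\alpha,\mu)$. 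Hence the general solution reads $u(z) = c_1 J_\nu\!\left(\frac{2\sqrt{\lambda}}{2-\alpha} z\right) + c_2 Y_\nu\!\left(\frac{2\sqrt{\lambda}}{2-\alpha} z\right)$.

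Next, the condition at $x = 0$ forces $c_2 = 0$: because $\mu \leq \mu(\alpha)$ we have $\nu \geq 0$, and the second Bessel branch $Y_\nu$ (or the logarithmic solution in the borderline case $\nu = 0$) produces, via $\Phi(x) = x^{(1-\alpha)/2} u(x^{(2-\alpha)/2})$, a function whose behavior near the origin makes the weighted Dirichlet integral $\int_0^1 (x^\alpha \Phi'^2 - \mu x^{\alpha-2}\Phi^2)\,dx$ diverge, so $\Phi$ cannot belong to $H^{1,\mu}_{\alpha,0}(0,1)$. The remaining condition $\Phi(1) = 0$ then becomes $J_\nu\!\left(\frac{2\sqrt{\lambda}}{2-\alpha}\right) = 0$, forcing $\frac{2\sqrt{\lambda}}{2-\alpha}$ to coincide with a positive zero $j_{\nu, n}$ of $J_\nu$, which yields \eqref{eigenvalues} and \eqref{eigenfunctions} up to an overall constant.

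To fix the constant I would compute $\int_0^1 \Phi_{\alpha,\mu,n}(x)^2\,dx$ by the substitution $\xi = x^{(2-\alpha)/2}$, which reduces the integral to $\frac{2}{2-\alpha}\int_0^1 \xi J_\nu(j_{\nu,n}\xi)^2\,d\xi$; the classical Bessel orthogonality identity recalled just before the proposition gives $\frac{[J_\nu'(j_{\nu,n})]^2}{2-\alpha}$, so the prefactor in \eqref{eigenfunctions} is chosen exactly to make the $L^2$-norm equal to $1$. Mutual orthogonality of the family $(\Phi_{\alpha,\mu,n})_{n \geq 1}$ follows from the same identity. Finally, the fact that this family is a basis of $L^2(0,1)$ is a consequence of spectral theory: the operator $\Phi \mapsto -(x^\alpha \Phi')' - \mu x^{\alpha-2}\Phi$ with Dirichlet boundary conditions is self-adjoint and bounded below on $L^2(0,1)$, and the compact embedding $H^{1,\mu}_{\alpha,0}(0,1) \hookrightarrow L^2(0,1)$ (a consequence of the weighted Hardy inequality \eqref{hardyinequality} combined with the classical Rellich argument) ensures it has compact resolvent, so its eigenfunctions form an orthonormal basis of $L^2(0,1)$.

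The main technical obstacle is the rigorous elimination of the $Y_\nu$ branch at $x = 0$: one needs precise asymptotics of the singular solution near the origin (typically $Y_\nu(w) \sim -\frac{\Gamma(\nu)}{\pi}(w/2)^{-\nu}$ for $\nu > 0$ and a logarithmic blow-up for $\nu = 0$) and to check carefully, in terms of the variable $x$, that the resulting candidate eigenfunction is incompatible with membership in $H^{1,\mu}_{\alpha,0}(0,1)$; once this is settled, the remaining computations are direct.
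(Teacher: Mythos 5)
Your argument is essentially correct, but note that the paper itself offers no proof of this proposition: it simply states that the eigenvalues and eigenfunctions ``have been computed in \cite{biccsantavan2020}''. What you have written is precisely the standard derivation carried out in that reference: the Liouville-type substitution $\Phi(x)=x^{(1-\alpha)/2}u(x^{(2-\alpha)/2})$ reducing \eqref{eigenvaluesproblem} to Bessel's equation of order $\nu(\alpha,\mu)$, the exclusion of the second branch by its incompatibility with the energy space, the quantization $\frac{2\sqrt{\lambda}}{2-\alpha}=j_{\nu,n}$ from $\Phi(1)=0$, and the normalization constant obtained from the integral identity $\int_0^1 \xi J_\nu(j_{\nu,n}\xi)^2\,d\xi=\frac12[J_\nu'(j_{\nu,n})]^2$ recalled just before the proposition (your bookkeeping of the exponents in the substitution $\xi=x^{(2-\alpha)/2}$ is correct and yields exactly the prefactor $\sqrt{2-\alpha}/|J_\nu'(j_{\nu,n})|$). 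The only places deserving extra care, which you partly flag yourself, are the critical case $\mu=\mu(\alpha)$ (where $\nu=0$ and the second solution is logarithmic, and where the quadratic form $\int_0^1(x^\alpha\Phi_x^2-\mu x^{\alpha-2}\Phi^2)\,dx$ no longer controls $\int_0^1 x^\alpha\Phi_x^2\,dx$ by \eqref{hardyinequality} alone, so the compactness of the embedding $H^{1,\mu}_{\alpha,0}(0,1)\hookrightarrow L^2(0,1)$ requires an improved Hardy--Poincar\'e inequality as in \cite{Van2011}), and the observation that for certain $\mu$ the singular branch is in $L^2$ and even vanishes at $x=0$, so that membership in the form domain --- not the pointwise boundary condition --- is what genuinely rules it out; an alternative to your compact-resolvent argument for completeness is to invoke the classical completeness of Fourier--Bessel systems directly.
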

We have the following result which will be used later.

\begin{lemma}\label{gapresult}
Let $(\lambda_{\alpha,\mu, k})_{k\geq 1}$ be the sequence of eigenvalues of the spectral problem \eqref{eigenvaluesproblem}.
Then, the following properties hold:
\begin{enumerate}
\item For all $n,m \in \mathbb{N}^\star$, there is a constant $\rho > 0$ such that 
$(\lambda_{\alpha,\mu, k})_{k\geq 1}$  satisfies the following gap condition:  there is a constant $\rho> 0$ such that
\begin{equation}\label{gap}
| \lambda_{\alpha,\mu, n} - \lambda_{\alpha,\mu, m} | \geq \rho |n^2 - m^2|, \qquad \forall n,m \geq 1.
\end{equation}
\item The series $\displaystyle\sum_{n\geq 1}  \frac{1}{\lambda_{\alpha,\mu,n}}$ is convergent.
\end{enumerate}
\end{lemma}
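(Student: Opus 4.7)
The plan is to reduce both statements to the known spectral structure. By Proposition \ref{spectrum}, $\lambda_{\alpha,\mu,n} = \left(\frac{2-\alpha}{2}\right)^{2} j_{\nu,n}^{2}$ with $\nu:=\nu(\alpha,\mu)\geq 0$, so everything comes down to quantitative estimates on the Bessel zeros $j_{\nu,n}$, which are available in Lemma \ref{lemmadifference} and in the two-sided bounds \eqref{boundcase1}--\eqref{boundcase2}.

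First I would attack the gap condition. For $n>m$ I would factor
\[
|\lambda_{\alpha,\mu,n}-\lambda_{\alpha,\mu,m}| \;=\; \left(\frac{2-\alpha}{2}\right)^{2} (j_{\nu,n}-j_{\nu,m})(j_{\nu,n}+j_{\nu,m}),
\]
and then lower-bound the two factors separately. For the difference factor, Lemma \ref{lemmadifference} produces a uniform constant $c_{\nu}>0$ with $j_{\nu,k+1}-j_{\nu,k}\geq c_{\nu}$ for every $k\geq 1$: in the regime $\nu\in[0,1/2]$ the difference sequence is nondecreasing with limit $\pi$, so $c_{\nu}:=j_{\nu,2}-j_{\nu,1}>0$ works (the zeros are simple and distinct); in the regime $\nu\geq 1/2$ it is nonincreasing with limit $\pi$, so $c_{\nu}:=\pi$ works. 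Telescoping then gives $j_{\nu,n}-j_{\nu,m}\geq c_{\nu}(n-m)$. For the sum factor, the two-sided bounds \eqref{boundcase1}--\eqref{boundcase2} yield $j_{\nu,n}\geq c'n$ for every $n\geq 1$ and some $c'>0$ (the linear asymptotics $j_{\nu,n}\sim n\pi$ take care of large $n$, and any finite block where the literal lower bound in \eqref{boundcase1}--\eqref{boundcase2} happens to be nonpositive is dealt with by inspection using $j_{\nu,n}>0$). Hence $j_{\nu,n}+j_{\nu,m}\geq c'(n+m)$, and multiplication produces the claimed gap $|\lambda_{\alpha,\mu,n}-\lambda_{\alpha,\mu,m}|\geq \rho|n^{2}-m^{2}|$ with $\rho=\left(\frac{2-\alpha}{2}\right)^{2}c_{\nu}\,c'$.

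For the convergence of $\sum_{n\geq 1}1/\lambda_{\alpha,\mu,n}$, the same lower bound $j_{\nu,n}\geq c'n$ immediately implies $\lambda_{\alpha,\mu,n}\geq Cn^{2}$ for some $C>0$, so the series is dominated by a constant multiple of $\sum 1/n^{2}<+\infty$.

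The one bookkeeping point to watch is the split according to whether $\nu(\alpha,\mu)\in[0,1/2]$ or $\nu(\alpha,\mu)\geq 1/2$ when extracting $c_{\nu}$, but both regimes deliver a strictly positive uniform gap of consecutive Bessel zeros, so no substantive obstacle arises; the argument is structurally the same in both cases and the constants $c_{\nu}, c', \rho$ depend only on $\alpha$ and $\mu$.
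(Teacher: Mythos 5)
Your proposal is correct and follows essentially the same route as the paper: factor $\lambda_{\alpha,\mu,n}-\lambda_{\alpha,\mu,m}$ into difference and sum of Bessel zeros, obtain a uniform lower bound on consecutive gaps from Lemma \ref{lemmadifference} (splitting into the regimes $\nu\in[0,1/2]$ and $\nu\geq 1/2$), telescope, and control the sum factor and the series $\sum 1/\lambda_{\alpha,\mu,n}$ via the two-sided bounds \eqref{boundcase1}--\eqref{boundcase2}. The only difference is that the paper tracks explicit constants (e.g.\ $\rho=\tfrac{7}{64}\pi^2(2-\alpha)^2$) where you keep generic $c_\nu,c'$, and your caveat about possibly nonpositive lower bounds is unnecessary since \eqref{boundcase1}--\eqref{boundcase2} are already strictly positive for all $n\geq 1$.
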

\begin{proof}
\begin{enumerate}
\item Let $n, m \in \mathbb{N}^\star$ with $n \geq m$. We have
\begin{align*}
&\lambda_{\alpha,\mu, n} - \lambda_{\alpha,\mu, m} = \Big(\dfrac{2-\alpha}{2}\Big)^2( j_{\nu(\alpha,\mu), n}^2 -  j_{\nu(\alpha,\mu), m}^2 )  \notag\\
&=\Big(\dfrac{2-\alpha}{2}\Big)^2 ( j_{\nu(\alpha,\mu), n} -  j_{\nu(\alpha,\mu), m} ) ( j_{\nu(\alpha,\mu), n} +  j_{\nu(\alpha,\mu), m} ) \notag\\
&=\Big(\dfrac{2-\alpha}{2}\Big)^2 \Big(( j_{\nu(\alpha,\mu), n} -  j_{\nu(\alpha,\mu), n-1})+ \cdots + (j_{\nu(\alpha,\mu), m+1} -  j_{\nu(\alpha,\mu), m} ) \Big)
(j_{\nu(\alpha,\mu), n} +  j_{\nu(\alpha,\mu), m}).
\end{align*}	
We can now distinguish the two different cases $\nu(\alpha,\mu) \in \Big[0, \dfrac{1}{2}\Big]$ and $\nu(\alpha,\mu) \geq \dfrac{1}{2}$, depending on the parameter $\mu$.
\begin{itemize}
\item if $\nu(\alpha,\mu) \in \Big[0, \dfrac{1}{2}\Big]$ $\Big($i.e. $\mu \in \Big( \dfrac{\alpha}{16}(3\alpha-4),\mu(\alpha) \Big] \Big)$, by virtue of Lemma \ref{lemmadifference}
  we immediately have that
$$j_{\nu(\alpha,\mu), n} -  j_{\nu(\alpha,\mu), n-1}  \geq  j_{\nu(\alpha,\mu), 2} -  j_{\nu(\alpha,\mu), 1}, \quad \forall n\geq 2.$$
Therefore,
\begin{equation*}
\lambda_{\alpha,\mu, n} - \lambda_{\alpha,\mu, m} \geq
(n-m) (j_{\nu(\alpha,\mu), 2} -  j_{\nu(\alpha,\mu), 1}) (j_{\nu(\alpha,\mu), n} +  j_{\nu(\alpha,\mu), m}).
\end{equation*}	
By using \eqref{boundcase1}, the last inequality becomes:
\begin{equation}\label{inequality}
\lambda_{\alpha,\mu, n} - \lambda_{\alpha,\mu, m} \geq \frac{7}{8} \pi^2 \Big(\dfrac{2-\alpha}{2}\Big)^2 (n-m) \big(n+m + \nu(\alpha,\mu) -\frac{1}{2} \big).
\end{equation}
Moreover, we have
$$
\big(n+m + \nu(\alpha,\mu) -\frac{1}{2} \big)>\frac{n+m}{2},
$$
and thus, that there exists $\rho= \frac{7}{64} \pi^2 (2-\alpha)^2 $ such that
\begin{align*}
\lambda_{\alpha,\mu, n} - \lambda_{\alpha,\mu, m} \geq \rho (n^2-m^2).
\end{align*}	
\item Let us now see the case $\nu(\alpha,\mu) \geq \dfrac{1}{2}$ \Big(i.e. $\mu \leq \dfrac{\alpha}{16}(3\alpha-4) $\Big). Here we use the fact that the sequence $(j_{\nu(\alpha,\mu), n+1} - j_{\nu(\alpha,\mu), n})_{n}$ is nonincreasing and converges to $\pi$. This ensures that
\begin{equation*}
  j_{\nu(\alpha,\mu), n+1} - j_{\nu(\alpha,\mu), n} \geq \pi,\qquad \forall n\geq 1.
\end{equation*}
Therefore:
\begin{equation*}
\lambda_{\alpha,\mu, n} - \lambda_{\alpha,\mu, m} \geq  \Big(\dfrac{2-\alpha}{2}\Big)^2 \pi (n-m) (j_{\nu(\alpha,\mu), n} +  j_{\nu(\alpha,\mu), m}).
\end{equation*}
Owing to \eqref{boundcase2}, we also have
\begin{equation*}
j_{\nu(\alpha,\mu), n} +  j_{\nu(\alpha,\mu), m} \geq \big(n+m + \frac{\nu(\alpha,\mu)}{2} -\frac{1}{4} \big) \pi \geq \pi (n+m).
\end{equation*}
Combining the above last two estimates, the thesis follows with $\rho= \Big(\dfrac{2-\alpha}{2}\Big)^2\pi^2$.
\end{itemize}
Thus, in every case there holds
\begin{equation*}
\lambda_{\alpha,\mu, n} - \lambda_{\alpha,\mu, m} \geq \rho (n^2-m^2).
\end{equation*}
In both cases, after reversing the roles of $n$ and $m$, one has
\begin{equation*}
\lambda_{\alpha,\mu, m} - \lambda_{\alpha,\mu, n} \geq \rho (m^2-n^2).
\end{equation*}
Hence,
\begin{equation*}
| \lambda_{\alpha,\mu, n} - \lambda_{\alpha,\mu, m} | \geq \rho |n^2 - m^2 |, \qquad \forall n,m \geq 1,
\end{equation*}
for a constant $\rho>0$.

\item This point follows easily form \eqref{boundcase1}. Indeed, $\forall \nu(\alpha,\mu) \in \Big(0,\dfrac{1}{2}\Big],\quad \forall n \geq 1$
\begin{equation*}
\Big( n-\dfrac{1}{4} \Big) \pi \leq j_{\nu(\alpha,\mu),n}
\end{equation*}
Thus 
\begin{equation*}
\sum\limits_{n \geq 1} \dfrac{1}{\lambda_{\alpha,\mu,n}}
\leq  \dfrac{1}{\pi^2} \Big(\frac{2}{2-\alpha}\Big)^2 \sum\limits_{n \geq 1} \dfrac{1}{ \Big(n - \frac{1}{4} \Big)^2 }
\leq  \dfrac{4}{\pi^2} \Big(\frac{2}{2-\alpha}\Big)^2 \sum\limits_{n \geq 1} \dfrac{1}{ n^2 } < +\infty.
\end{equation*}
\end{enumerate}
\end{proof}
\subsection{Vectorial degenerate/singular operators}
Let $A$ be given by \eqref{AandB} and consider the degenerate/singular vectorial operator
\begin{equation}\label{operatorL}
\begin{aligned}
L: D(L) \subset L^2(0,1)^2 &\rightarrow L^2(0,1)^2\\
y&\mapsto  - (x^\alpha y_x)_x - \frac{\mu}{x^{2-\alpha}}y - Ay,
\end{aligned}
\end{equation}
with domain $D(L)=H^{2,\mu}_\alpha(0,1)^2 \cap H^{1,\mu}_{\alpha,0}(0,1)^2$ and also its adjoint $L^*$.

In the sequel, we pass to derive some properties of the eigenvalues and eigenfunctions of the operators $L$ and $L^*$ which
will be useful for developing the moment method. Let us first analyze the spectrum of the operators $L$ and $L^*$:

\begin{proposition}\label{propo-eigenf-LL*}
Let us consider the operator $L$ given by \eqref{operatorL} and its
adjoint $L^*$. Then,
\begin{enumerate}
\item
The spectra of $L$ and $L^*$ are given by
$\sigma(L) = \sigma(L^*) = \big\{ \lambda_{\alpha,\mu, n}^{(1)} , \lambda_{\alpha,\mu, n}^{(2)} \big\}_{n\geq1}$
with
\begin{equation}\label{sigmaLL*}
\lambda_{\alpha,\mu, n}^{(1)} = \lambda_{\alpha,\mu, n} - \alpha_1,\quad \lambda_{\alpha,\mu, n}^{(2)}= \lambda_{\alpha,\mu, n} - \alpha_2,\quad \forall n\geq1,
\end{equation}
where $\alpha_1$ and $\alpha_2$ are the eigenvalues of the matrix $A$ defined by :
\begin{itemize}
\item Case 1: $a_2^2 + 4a_1>0 $,
 \begin{equation}\label{mu1mu2real}
\alpha_1 = \frac{1}{2} \Big(a_2 - \sqrt{a_2^2 + 4a_1}\Big)  \quad \text{and} \quad \alpha_2 = \frac{1}{2} \Big(a_2 + \sqrt{a_2^2 + 4a_1}\Big).
\end{equation}
\item Case 2: $a_2^2 + 4a_1<0 $,
\begin{equation}\label{mu1mu2complex}
\alpha_1 = \frac{1}{2} \Big(a_2 + i \sqrt{-(a_2^2 + 4a_1)}\Big)  \; \text{and} \; \alpha_2 = \frac{1}{2}  \Big(a_2 - i \sqrt{-(a_2^2 + 4a_1)} \Big).
\end{equation}
\end{itemize}
\item
For each $n \geq 1$, the corresponding eigenfunctions of $L$ (resp., $L^*$) associated to $\lambda_{\alpha,\mu, n}^{(1)}$ and $\lambda_{\alpha,\mu, n}^{(2)}$ are respectively given by
\begin{equation}\label{eigenf-L}
\psi_n^{(1)}  =U_{1} \Phi_{\alpha,\mu, n}, \qquad \psi_n^{(2)}=U_{2} \Phi_{\alpha,\mu, n},
\end{equation}
with
\begin{equation*}
 U_{1}=  \frac{a_1}{\alpha_2^2 + a_1} \begin{pmatrix} -\alpha_2 \\ 1 \end{pmatrix}\quad \text{and}\quad U_{2}= \begin{pmatrix} -\alpha_1 \\ 1 \end{pmatrix}
\end{equation*}
(resp.,
\begin{equation}\label{eigenf-L*}
\Psi_n^{(1)}  = V_{1}\Phi_{\alpha,\mu, n}, \qquad \Psi_n^{(2)}  = V_{2}\Phi_{\alpha,\mu, n},
\end{equation}
with
\begin{equation*}
 V_{1}=  \begin{pmatrix} -\frac{\alpha_2}{a_1} \\ 1 \end{pmatrix}\quad \text{and}\quad V_{2}= \frac{a_1}{\alpha_1^2 + a_1} \begin{pmatrix} -\frac{\alpha_1}{a_1} \\ 1 \end{pmatrix}.
\end{equation*}
\end{enumerate}
\end{proposition}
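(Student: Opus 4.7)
The plan is to reduce the vectorial spectral problem for $L$ to the scalar spectral problem \eqref{eigenvaluesproblem} combined with the elementary $2\times 2$ diagonalization of the matrix $A$. The key observation is the product structure: since Proposition \ref{spectrum} gives an orthonormal basis $\{\Phi_{\alpha,\mu,n}\}_{n\ge 1}$ of $L^2(0,1)$, the tensor products $\{e_j\Phi_{\alpha,\mu,n}\}_{n\ge1,\,j=1,2}$ form an orthonormal basis of $L^2(0,1)^2$, and each two-dimensional subspace $E_n:=\mathrm{span}\{e_1\Phi_{\alpha,\mu,n},e_2\Phi_{\alpha,\mu,n}\}$ is invariant under $L$.

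First, I would search for eigenfunctions of the form $\psi=U\Phi_{\alpha,\mu,n}$ with $U\in\mathbb{C}^2$. Substituting this ansatz and using the scalar eigenfunction identity $-(x^\alpha\Phi_{\alpha,\mu,n}')'-\mu x^{\alpha-2}\Phi_{\alpha,\mu,n}=\lambda_{\alpha,\mu,n}\Phi_{\alpha,\mu,n}$ reduces $L\psi=\lambda\psi$ to the purely algebraic condition $AU=(\lambda_{\alpha,\mu,n}-\lambda)U$, so that $U$ must be an eigenvector of $A$ with eigenvalue $\lambda_{\alpha,\mu,n}-\lambda$. Next, I would compute the spectrum of $A=\bigl(\begin{smallmatrix}0&a_1\\ 1&a_2\end{smallmatrix}\bigr)$ via its characteristic polynomial $\alpha^2-a_2\alpha-a_1=0$, whose roots $\alpha_1,\alpha_2$ are those written in \eqref{mu1mu2real} or \eqref{mu1mu2complex} depending on the sign of the discriminant. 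In both considered cases these eigenvalues are distinct and $A$ is therefore diagonalizable, so the action of $L$ on $E_n$ is fully described by $\lambda_{\alpha,\mu,n}I-A$; hence exactly the two eigenvalues $\lambda_{\alpha,\mu,n}^{(i)}=\lambda_{\alpha,\mu,n}-\alpha_i$ arise, and letting $n$ vary exhausts $\sigma(L)$. Solving $(A-\alpha_iI)U=0$ directly and using Vieta's identities $\alpha_1+\alpha_2=a_2$ and $\alpha_1\alpha_2=-a_1$ yields $U_i\propto\begin{pmatrix}-\alpha_{3-i}\\1\end{pmatrix}$. Repeating the argument with $A^*=A^{T}$ gives the same spectrum (so $\sigma(L)=\sigma(L^*)$, which is consistent with conjugation symmetry in the complex case) and eigenvectors $V_i\propto\begin{pmatrix}-\alpha_{3-i}/a_1\\1\end{pmatrix}$.

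Finally, the scalar normalizations appearing in $U_1$ and $V_2$ in the statement are chosen precisely so that $U_i\cdot V_j=\delta_{ij}$; this is a short verification that invokes once more $\alpha_1\alpha_2=-a_1$. This makes $\{\psi_n^{(i)}\}$ and $\{\Psi_m^{(j)}\}$ a biorthogonal system in $L^2(0,1)^2$, which will be crucial for the subsequent moment-method argument. The proof is essentially an exercise and presents no significant obstacle; the only step deserving a little care is the index swap between an eigenvalue $\alpha_i$ and the index appearing in its corresponding eigenvector (and the analogous swap when passing from $L$ to $L^*$).
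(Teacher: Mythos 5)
Your proposal is correct and follows essentially the same route as the paper: both reduce the vectorial eigenvalue problem on each invariant subspace $\mathrm{span}\{e_1\Phi_{\alpha,\mu,n},e_2\Phi_{\alpha,\mu,n}\}$ to the diagonalization of the $2\times 2$ matrix $\lambda_{\alpha,\mu,n}\,Id - A$ (equivalently, of $A$ itself) via its characteristic polynomial, and treat $L^*$ by the same argument applied to $A^*$. Your closing biorthogonality check $U_i\cdot V_j=\delta_{ij}$ is a correct bonus that the paper defers to the first item of Proposition \ref{rieszbasisresult}.
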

\begin{proof}
We will prove the result for the operator $L$. The same reasoning provides the proof for its
adjoint $L^*$.

Using the fact that the function $\Phi_{\alpha,\mu, n}$ is the eigenfunction of the Dirichlet degenerate/singular operator $ -\partial_x(x^\alpha \partial_{x}. ) -\frac{\mu}{x^{2-\alpha}}$ associated to the eigenvalue $\lambda_{\alpha,\mu, n}$, one can see that the eigenvalues of the operator $L$ correspond to the eigenvalues of the matrices
\begin{equation*}
\lambda_{\alpha,\mu, n} Id - A, \qquad \forall n \geq 1,
\end{equation*}
$(Id\in \mathcal{L}(\mathbb{R}^2)$ is the identity matrix) and the associated eigenfunctions of $L$ are given under the form $\psi_n(\cdot) = z_n \Phi_{\alpha,\mu, n}(\cdot)$, where $z_n \in \mathbb{R}^2$ is the
associated eigenvector of the matrix $\lambda_{\alpha,\mu, n} Id - A$.

Taking into account the expression of the characteristic polynomial of $\lambda_{\alpha,\mu, n} Id - A$:
$$P(z) = z^2 - z (2 \lambda_{\alpha,\mu, n} - a_2) + \lambda_{\alpha,\mu, n} (\lambda_{\alpha,\mu, n} - a_2) - a_1, \qquad n \geq 1,$$
a direct computation provides the formulas \eqref{sigmaLL*} and \eqref{eigenf-L} as eigenvalues and associated eigenfunctions
of the operator $L$. This ends the proof.
\end{proof}

Let us now check that the sequence of eigenvalues of $L$
and $L^*$ fulfills the conditions in Theorem \ref{thm-biorth}. One has
\begin{proposition}\label{prop-incres-seq}
Assume that the following condition holds
\begin{equation}\label{spectre-cond}
\lambda_{\alpha,\mu, n}- \lambda_{\alpha,\mu, l} \neq \alpha_1 - \alpha_2,\quad \forall n, l \in \mathbb{N}^*, \quad \text{with} \quad n\neq l.
\end{equation}
Then, one can construct a family from the spectrum $\big\{ \lambda_{\alpha,\mu, n}^{(1)} , \lambda_{\alpha,\mu, n}^{(2)} \big\}_{n\geq1}$ defined by
\begin{equation}\label{increas-sequence}
\begin{aligned}
\big\{ \Lambda_{\alpha,\mu, n} \big\}_{n\geq1} &= \big\{ \lambda_{\alpha,\mu, n}^{(1)} + \alpha_2 , \lambda_{\alpha,\mu, n}^{(2)}+\alpha_2 \big\}_{n\geq1} \\
&=\{\lambda_{\alpha,\mu, n} + \alpha_2 - \alpha_1: n\geq 1\} \cup \{\lambda_{\alpha,\mu, n}: n \geq 1\},
\end{aligned}
\end{equation}
which satisfies the hypotheses in Theorem \ref{thm-biorth}.
\end{proposition}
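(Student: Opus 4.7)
The plan is to verify each of the six hypotheses of Theorem~\ref{thm-biorth} for the combined family
\[
\{\Lambda_{\alpha,\mu,n}\}_{n\geq 1}=\{\lambda_{\alpha,\mu,n}+\alpha_2-\alpha_1\}_{n\geq 1}\cup\{\lambda_{\alpha,\mu,n}\}_{n\geq 1},
\]
once relabeled so that $|\Lambda_{\alpha,\mu,n}|\leq|\Lambda_{\alpha,\mu,n+1}|$. Each element then admits a unique representation $\lambda_{\alpha,\mu,n}+\varepsilon(\alpha_2-\alpha_1)$ with $n\geq 1$ and $\varepsilon\in\{0,1\}$; I shall call these the two \emph{subsequences}.

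For the easy conditions (1--4), I would argue as follows. Distinctness within each subsequence follows from the strict monotonicity of $\{\lambda_{\alpha,\mu,n}\}$, while distinctness across subsequences is exactly \eqref{spectre-cond} when $n\neq l$ and follows from $\alpha_1\neq\alpha_2$ (both cases of Proposition~\ref{propo-eigenf-LL*} have nonzero discriminant) when $n=l$. Positivity of the real parts is immediate since in Case~1 the shift $\alpha_2-\alpha_1=\sqrt{a_2^2+4a_1}$ is nonnegative while in Case~2 it is purely imaginary, so in either situation $\Re(\Lambda_{\alpha,\mu,n})\geq\lambda_{\alpha,\mu,1}>0$. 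The imaginary parts are identically zero in Case~1 and uniformly bounded by $\tfrac{1}{2}\sqrt{-(a_2^2+4a_1)}$ in Case~2, yielding condition~3 with a sufficiently large $\delta$. Condition~4 is enforced by the relabeling.

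The real work goes into condition~5. Given a pair $(\Lambda_N,\Lambda_M)=(\lambda_{\alpha,\mu,n}+\varepsilon_N(\alpha_2-\alpha_1),\lambda_{\alpha,\mu,m}+\varepsilon_M(\alpha_2-\alpha_1))$, I would split into two regimes. For a same-type pair ($\varepsilon_N=\varepsilon_M$) the difference equals $\lambda_{\alpha,\mu,n}-\lambda_{\alpha,\mu,m}$, to which Lemma~\ref{gapresult} applies directly. For a cross-type pair, the triangle inequality combined with Lemma~\ref{gapresult} gives
\[
|\Lambda_N-\Lambda_M|\geq \rho|n^2-m^2|-|\alpha_2-\alpha_1|,
\]
which is bounded below by $\tfrac{\rho}{2}|n^2-m^2|$ as soon as $|n-m|$ is sufficiently large. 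From the Bessel-zero asymptotics $j_{\nu(\alpha,\mu),n}=n\pi+O(1)$ one obtains $n=\tfrac{1}{2}N+O(1)$, hence $|n^2-m^2|\geq c_0|N^2-M^2|-c_1$, which after enlarging $q$ yields the quadratic gap $|\Lambda_N-\Lambda_M|\geq \varrho|N^2-M^2|$ for $|N-M|\geq q$. For the remaining close cross-type pairs, for each fixed residue $k=n-m\neq 0$ with $|k|$ bounded the sequence $(\lambda_{\alpha,\mu,n}-\lambda_{\alpha,\mu,n-k})_n$ tends to $+\infty$ by the $cn^2$ growth, so \eqref{spectre-cond} forces it to stay uniformly away from $\alpha_1-\alpha_2$; together with $|\alpha_2-\alpha_1|>0$ for $k=0$ and the gap bound for same-type close pairs, this produces the strict positivity of the infimum in \eqref{strong-gap}.

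Finally, for condition~6, the Bessel asymptotics give $\lambda_{\alpha,\mu,n}=\bigl(\tfrac{\pi(2-\alpha)}{2}\bigr)^{2}n^2+O(n)$, so the scalar counting function satisfies $|\mathcal{N}_0(r)-\tfrac{2}{\pi(2-\alpha)}\sqrt{r}|\leq C$. Because the combined sequence consists of two shifted copies of $\{\lambda_{\alpha,\mu,n}\}$ with a bounded shift, $|\mathcal{N}(r)-2\mathcal{N}_0(r)|$ is bounded, which yields \eqref{ineq-counting} with $p=\tfrac{4}{\pi(2-\alpha)}$. The main obstacle is the close cross-type case in the gap analysis: one must preclude $\lambda_{\alpha,\mu,n}-\lambda_{\alpha,\mu,m}$ from approaching $\alpha_1-\alpha_2$ arbitrarily closely while $|n-m|$ stays bounded, which is exactly what the non-resonance assumption \eqref{spectre-cond} rules out in conjunction with the quadratic growth of the eigenvalues.
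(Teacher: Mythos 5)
Your proof is correct and follows essentially the same route as the paper: verify the six hypotheses of Theorem \ref{thm-biorth}, with the gap condition handled by splitting into same-type pairs (Lemma \ref{gapresult} directly) and cross-type pairs (triangle inequality plus \eqref{spectre-cond} to control the finitely many near-coincidences), and the counting estimate from the Bessel-zero asymptotics. The only caveat is your assertion that the discriminant $a_2^2+4a_1$ is always nonzero: the paper also treats the double-eigenvalue case $a_2^2+4a_1=0$ (its Case 3), where $\alpha_1=\alpha_2$ and the family \eqref{increas-sequence} collapses to the scalar sequence $\{\lambda_{\alpha,\mu,n}\}_{n\geq1}$, for which all hypotheses follow at once from Lemma \ref{gapresult} --- a trivial addition, but one you should record.
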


\begin{proof}
We distinguish between three cases depending on the spectrum of the matrix $A$.\\

\noindent\textbf{Case 1: $A$ has two real eigenvalues $\alpha_1$ and $\alpha_2$, chosen such that $\alpha_1<\alpha_2$.}

Let us introduce the sequence $\big\{\Lambda_{\alpha,\mu, n} \big\}_{n\geq1}$, where
\begin{equation*}
\big\{ \Lambda_{\alpha,\mu, n}: n\geq1 \big\}
:= \big\{ \lambda_{\alpha,\mu, n}^{(1)} + \alpha_2 , \lambda_{\alpha,\mu, n}^{(2)}+\alpha_2 \big\}_{n\geq1}.
\end{equation*}
The hypothesis 1) holds true if and only if the condition \eqref{spectre-cond} is satisfied. In addition, the hypotheses 2) and 3) are obviously satisfied by definition.

Let us now show the hypothesis 4). Since $\alpha_2 - \alpha_1 >0$, observe that $\big\{  \lambda_{\alpha,\mu, n}^{(1)} + \alpha_2 \big\}_{n\geq1}$ and
$\big\{\lambda_{\alpha,\mu, n}^{(2)}+\alpha_2 \big\}_{n\geq1}$ are increasing sequences satisfying
\begin{equation*}
0<\lambda_{\alpha,\mu, n}^{(2)}+\alpha_2< \lambda_{\alpha,\mu, n}^{(1)}+\alpha_2,\quad \forall n\geq1.
\end{equation*}
Thus, we deduce that the sequence $\big\{\Lambda_{\alpha,\mu, n}\big\}_{n\geq1}$ can be rearranged into a positive increasing sequence.

Let us move to prove hypothesis 5). For this purpose, we are going to give an explicit rearrangement of the sequence
$\big\{ \lambda_{\alpha,\mu, n}^{(1)} + \alpha_2 , \lambda_{\alpha,\mu, n}^{(2)}+\alpha_2 \big\}_{n\geq1}$. Firstly, observe that there exists an integer $n_0\geq1$ and a constant $C>0$ such that
\begin{equation}\label{mainineq}
\begin{aligned}
&\lambda_{\alpha,\mu, n-1}^{ (1)} <  \lambda_{\alpha,\mu, n}^{(2)} < \lambda_{\alpha,\mu, n}^{(1)} <  \lambda_{\alpha,\mu, n+1}^{(2)}<\cdots,\quad \forall n\geq n_0,\;\text{and}\\
& \min_{n\geq n_0}\big\{\lambda_{\alpha,\mu, n}^{(2)}- \lambda_{\alpha,\mu, n-1}^{ (1)}, \lambda_{\alpha,\mu, n}^{(1)} - \lambda_{\alpha,\mu, n}^{(2)}\big\}>C.
\end{aligned}
\end{equation}
Indeed, using \eqref{gap}, one has
\begin{align}\label{0mainineq}
\lambda_{\alpha,\mu, n}^{(2)} -  \lambda_{\alpha,\mu, n-1}^{ (1)}
&=  \lambda_{\alpha,\mu, n} -  \lambda_{\alpha,\mu, n-1} + \alpha_1 -\alpha_2 \notag\\
&\geq  \rho (2n-1) + \alpha_1 -\alpha_2  \underset{n \to +\infty} {\longrightarrow} +\infty.
\end{align}
From \eqref{0mainineq} and the fact that $\lambda_{\alpha,\mu, n}^{(1)} - \lambda_{\alpha,\mu, n}^{(2)} = \alpha_2 - \alpha_1>0$, we can conclude \eqref{mainineq}.

Therefore, if $1\leq n \leq 2n_0 -2$, we define $\Lambda_{\alpha,\mu,n}$ such that
\begin{equation*}
\begin{aligned}
\big\{ \Lambda_{\alpha,\mu, n} \big\}_{1\leq n \leq 2n_0 -2} &= \{\lambda_{\alpha,\mu, n}^{(1)} + \alpha_2\}_{1\leq n\leq n_0-1} \cup \{\lambda_{\alpha,\mu, n}^{(2)}+ \alpha_2\}_{1\leq n\leq n_0-1}\quad \text{and}\\
&\quad \Lambda_{\alpha,\mu, n} < \Lambda_{\alpha,\mu, n+1}\quad \forall n: 1\leq n \leq 2n_0 -3.
\end{aligned}
\end{equation*}
Moreover, from $(2n_0-1)$-th term, we choose to arrange the sequence as follows:
\begin{equation}\label{maindef}
\Lambda_{\alpha,\mu, 2n-1} = \lambda_{\alpha,\mu, n}^{(2)} + \alpha_2 \quad \text{and}\quad \Lambda_{\alpha,\mu, 2n}= \lambda_{\alpha,\mu, n}^{(1)} + \alpha_2,\; \forall n \geq n_0.
\end{equation}
Since the elements of the sequence $\big\{ \Lambda_{\alpha,\mu, n} \big\}_{n\geq 1}$ are pairwise different and from  \eqref{mainineq}, one has:
\begin{equation}\label{mainineq2}
\inf\limits_{n, m \geq 1: n \neq m} |\lambda_{\alpha,\mu, n}^{(1)}  - \lambda_{\alpha,\mu, m}^{(2)} | > 0.
\end{equation}
Hence, thanks to \eqref{mainineq2}, the sequence $\big\{ \Lambda_{\alpha,\mu, n} \big\}_{n\geq 1}$ satisfies the second inequality in \eqref{strong-gap} for every $q\geq1$.

Our next task will be to prove the first inequality of \eqref{strong-gap} for appropriate $q>0$ and $\varrho>0$.
To this aim, as it has been remarked in \cite{BandN}, it is enough to prove the existence of $q>0$ and $\tilde{\varrho}>0$ such that
\begin{equation}\label{mainineq3}
|\Lambda_n - \Lambda_m| \geq \tilde{\varrho} |n^2 - m^2| \quad \forall n, m\geq q, \; |n -m|\geq q.
\end{equation}

We divide the proof of \eqref{mainineq3} into two steps.
\begin{enumerate}
\item  Observe that, if $n, m \in \mathbb{N}^\star$ are such that $n, m \geq n_0$ and $|n -m|\geq n_0$, then by \eqref{maindef} and using \eqref{gap} we have
\begin{equation*}
|\Lambda_{\alpha,\mu,2n} - \Lambda_{\alpha,\mu,2m}| =|\lambda_{\alpha,\mu,n} - \lambda_{\alpha,\mu,m}| \geq \rho |n^2-m^2| = \frac{\rho}{4} |(2n)^2- (2m)^2|
\end{equation*}
and
\begin{equation*}
\begin{aligned}
|\Lambda_{\alpha,\mu,2n-1} -\Lambda_{\alpha,\mu,2m-1}| =|\lambda_{\alpha,\mu,n} - \lambda_{\alpha,\mu,m}| &\geq \rho |n^2-m^2| \\
&\geq \frac{\rho}{4} |(2n-1)^2 -(2m-1)^2|.
\end{aligned}
\end{equation*}
We obtain thus the proof of \eqref{mainineq3} for $q=n_0$ and $\tilde{\varrho}= \frac{\rho}{4}$.
\item Let $n, m \in \mathbb{N}^\star$ such that $n, m \geq n_0$. From \eqref{maindef}, by denoting $\tilde{n} = 2n$ and $\tilde{m} = 2m-1 $ and using again \eqref{gap}, we readily see that
\begin{align*}
|\Lambda_{\alpha,\mu,\tilde{n}} - \Lambda_{\alpha,\mu,\tilde{m}}| &=\big|\lambda_{\alpha,\mu, n}^{(1)} - \lambda_{\alpha,\mu, m}^{(2)} \big|\notag\\
&=\big|\lambda_{\alpha,\mu,n} - \lambda_{\alpha,\mu,m} +(\alpha_2- \alpha_1)\big|\notag\\
& \geq \rho |n^2-m^2|- (\alpha_2- \alpha_1)\notag\\
& = \frac{\rho}{4} |\tilde{n}^2 - (\tilde{m}+1)^2|- (\alpha_2- \alpha_1)\notag\\
& = \frac{\rho}{4} |\tilde{n}^2 -  \tilde{m}^2 - 2\tilde{m}-1|- (\alpha_2- \alpha_1).
\end{align*}
Now, observe that if  $\tilde{n} < \tilde{m}$, we have
\begin{align*}
|\Lambda_{\alpha,\mu,\tilde{n}} - \Lambda_{\alpha,\mu,\tilde{m}}|
&\geq\frac{\rho}{4} (\tilde{m}^2 -  \tilde{n}^2 ) \Big(1 - \frac{4(\alpha_2- \alpha_1)}{\rho(\tilde{m}^2 -  \tilde{n}^2 )} \Big).
\end{align*}
Let us take an integer $q_0\geq \max\{2n_0 -1,\frac{4(\alpha_2- \alpha_1)}{\rho}\}$. Then,
$\forall \tilde{m}, \tilde{n} \geq q_0$ with $ |\tilde{m}- \tilde{n}| \geq q_0$, one has
\begin{align*}
|\Lambda_{\alpha,\mu,\tilde{n}} - \Lambda_{\alpha,\mu,\tilde{m}}|
&\geq \frac{\rho}{4} (\tilde{m}^2 -  \tilde{n}^2 ) \Big(1 - \frac{4(\alpha_2- \alpha_1)}{\rho(\tilde{m} + \tilde{n})q_0} \Big)\notag\\
&\geq\frac{\rho}{4} (\tilde{m}^2 -  \tilde{n}^2 ) \Big(1 - \frac{2(\alpha_2- \alpha_1)}{\rho q_0} \Big)\notag\\
&\geq\frac{\rho}{8} (\tilde{m}^2 -\tilde{n}^2).
\end{align*}
On the other hand, if  $\tilde{n} > \tilde{m}$, we have
\begin{align*}
|\Lambda_{\alpha,\mu,\tilde{n}} - \Lambda_{\alpha,\mu,\tilde{m}}|
&\geq\frac{\rho}{4} (\tilde{n}^2 -  \tilde{m}^2 ) \Big(1 - \Big( \frac{4(\alpha_2- \alpha_1)}{\rho} +2 \tilde{m} + 1 \Big)\frac{1}{(\tilde{n}^2 -  \tilde{m}^2 )} \Big).
\end{align*}
Let us work with an integer $q_1$ given by
$$q_1\geq \max\{2n_0 -1,\frac{4(\alpha_2- \alpha_1)}{\rho}+4\}.$$
Thus, if $\tilde{n}, \tilde{m} \in \mathbb{N}^*$
are such that $\tilde{n}, \tilde{m} \geq q_1$ and  $|\tilde{n}- \tilde{m}| \geq q_1$, then one has
\begin{align*}
|\Lambda_{\alpha,\mu,\tilde{n}} - \Lambda_{\alpha,\mu,\tilde{m}}|
&\geq\frac{\rho}{4} (\tilde{n}^2 -  \tilde{m}^2 ) \Big(1 - \Big(\frac{4(\alpha_2- \alpha_1)}{\rho} +2 \tilde{m} + 1\Big)\frac{1}{2\tilde{m}q_1} \Big)\notag\\
&\geq\frac{\rho}{4} (\tilde{n}^2 -  \tilde{m}^2 ) \Big(1 - \frac{1}{q_1} \Big(\frac{2(\alpha_2- \alpha_1)}{\rho} + 2 \Big) \Big)\notag\\
&\geq\frac{\rho}{8} (\tilde{n}^2 -  \tilde{m}^2 ).
\end{align*}
Hence, choosing $q=max\{q_0,q_1\}$, \eqref{mainineq3} follows immediately for $\tilde{\varrho}= \frac{\rho}{8}$.
\end{enumerate}
In conclusion, we have proved the existence of a number $q\geq 1$ such that \eqref{mainineq3} holds.

Let us now show the hypothesis 6). From the definition of  $\big\{\Lambda_{\alpha,\mu, n} \big\}_{n\geq 1}$, for any $r > 0$, we can write:
\begin{align*}
\mathcal{N}(r) & = \#\{ k : \; \lambda_{\alpha,\mu, k} + \alpha_2- \alpha_1 \leq r \} + \#\{ k : \; \lambda_{\mu, k} \leq r \}  \\
& =  \# \mathcal{A}_1(r) +  \# \mathcal{A}_2(r) = n_1 + n_2,
\end{align*}
where $\mathcal{A}_i(r)=\{k : \; \lambda_{\alpha,\mu, k}^{(i)}+\alpha_2 \leq r \}$ and $n_i= \# \mathcal{A}_i(r)$, i=1,2.
Our purpose is to prove suitable estimates for $ n_1$ and $n_2$.

From the definition of $\mathcal{A}_2(r)$ and $n_2$, we deduce that $n_2$ is a natural number which is characterized by $\lambda_{\alpha,\mu, n_2} \leq r$ and $\lambda_{\alpha,\mu, n_2+1} > r$.
We distinguish two cases depending on the value of $\nu_{\mu}$. Let us start by the case $\nu(\alpha,\mu) \leq \frac{1}{2}$.
From the inequality $\lambda_{\alpha,\mu, n_2} \leq r$ and by \eqref{boundcase1}, we have
$\big( n_2 + \frac{\nu(\alpha,\mu)}{2} - \frac{1}{4} \big)^2 \pi^2 \leq r$
so that
\begin{equation}\label{wupern1}
n_2 \leq \frac{\sqrt{r}}{ \pi} - \frac{\nu(\alpha,\mu)}{2} + \frac{1}{4}.	
\end{equation}
On the other hand, from the inequality $ \lambda_{\alpha,\mu, n_2+1} > r$, we get
\begin{equation*}
n_2 > \frac{\sqrt{r}}{ \pi} - \frac{\nu(\alpha,\mu)}{4} - \frac{7}{8}.	
\end{equation*}
Summarizing, $n_2$ is a nonnegative integer such that
\begin{equation}\label{wboundn1}
 \frac{\sqrt{r}}{ \pi} - \frac{\nu(\alpha,\mu)}{4} - \frac{7}{8} < n_2 \leq \frac{\sqrt{r}}{ \pi} - \frac{\nu(\alpha,\mu)}{2} + \frac{1}{4},\quad \forall r>0.	
\end{equation}
Next we are going to estimate $n_1$. Using arguments similar to the ones used above,
we can see that
$$ \lambda_{\alpha,\mu, n_1} +\alpha_2- \alpha_1 \leq r $$
and
$$ \lambda_{\alpha,\mu, n_1 +1} +\alpha_2- \alpha_1 > r $$
imply that
\begin{equation*}
\frac{\sqrt{r+\alpha_1-\alpha_2}}{ \pi} - \frac{\nu(\alpha,\mu)}{4} - \frac{7}{8} < n_1 \leq \frac{\sqrt{r+\alpha_1-\alpha_2}}{ \pi} - \frac{\nu(\alpha,\mu)}{2} + \frac{1}{4}.	
\end{equation*}
Then, using the fact that $\sqrt{a} - \sqrt{b} \leq \sqrt{a -b} $ and $\sqrt{a-b} \leq \sqrt{a}$  provided $a \geq b>0$, one gets
\begin{equation}\label{wboundn2}
\frac{\sqrt{r}}{ \pi} - \frac{\sqrt{\alpha_2- \alpha_1}}{ \pi} - \frac{\nu(\alpha,\mu)}{4} - \frac{7}{8} < n_1 \leq \frac{\sqrt{r}}{\pi} - \frac{\nu(\alpha,\mu)}{2} + \frac{1}{4},\quad \forall r>0.
\end{equation}
Recall that $\mathcal{N}(r) = n_1 + n_2$. Thus, combining \eqref{wboundn1} and \eqref{wboundn2}, it follows that for  $\nu(\alpha,\mu) \leq \frac{1}{2}$:
\begin{equation*}
\frac{2\sqrt{r}}{ \pi} - \frac{\sqrt{\alpha_2- \alpha_1}}{ \pi} - \frac{\nu(\alpha,\mu)}{2} - \frac{7}{4} < \mathcal{N}(r) \leq \frac{2\sqrt{r}}{ \pi} - \nu(\alpha,\mu) + \frac{1}{2},\quad \forall r>0,
\end{equation*}
and deduce \eqref{ineq-counting} with
\begin{equation*}
p =  \frac{2}{\pi}\quad \text{and}\quad s= \max\{  \frac{\sqrt{\alpha_2- \alpha_1}}{ \pi} +\frac{\nu(\alpha,\mu)}{2} +\frac{7}{4}, \; -\nu(\alpha,\mu) + \frac{1}{2}	\} =  \frac{\sqrt{\alpha_2- \alpha_1}}{ \pi} +\frac{\nu(\alpha,\mu)}{2} +\frac{7}{4}.
\end{equation*}
The case  $\nu_{\mu} \geq \frac{1}{2}$ can be treated in a similar way, but, instead of working with the bounds \eqref{boundcase1}, we will use \eqref{boundcase2} to obtain
\begin{equation}\label{sboundn1}
\frac{\sqrt{r}}{ \pi} - \frac{\nu(\alpha,\mu)}{2} - \frac{3}{4} < n_2 \leq \frac{\sqrt{r}}{ \pi} - \frac{\nu(\alpha,\mu)}{4} + \frac{1}{8},\quad \forall r>0,		
\end{equation}
and
\begin{equation}\label{sboundn2}
\frac{\sqrt{r}}{ \pi} - \frac{\sqrt{\alpha_2- \alpha_1}}{ \pi} - \frac{\nu(\alpha,\mu)}{2} - \frac{3}{4} < n_1 \leq \frac{\sqrt{r}}{ \pi} - \frac{\nu(\alpha,\mu)}{4} + \frac{1}{8},\quad \forall r>0.
\end{equation}
From the inequalities \eqref{sboundn1} and \eqref{sboundn2}, we obtain that:
\begin{equation*}
\frac{2\sqrt{r}}{ \pi} - \frac{\sqrt{\alpha_2- \alpha_1}}{ \pi} - \nu(\alpha,\mu) - \frac{3}{2} < \mathcal{N}(r) \leq \frac{2\sqrt{r}}{ \pi} - \frac{\nu(\alpha,\mu)}{2}  + \frac{1}{4},\quad \forall r>0,
\end{equation*}
and again deduce \eqref{ineq-counting} with
\begin{equation*}
p =  \frac{2}{\pi}\quad \text{and}\quad s= \max\{  \frac{\sqrt{\alpha_2- \alpha_1}}{ \pi} +\nu(\alpha,\mu) +\frac{3}{2}, \; -\frac{\nu(\alpha,\mu)}{2} + \frac{1}{4}	\} = \frac{\sqrt{\alpha_2- \alpha_1}}{ \pi} +\nu(\alpha,\mu) +\frac{3}{2}.
\end{equation*}
We thus obtain the last hypothesis 6) of Theorem \ref{thm-biorth}.
This ends the proof in this case.

\noindent\textbf{Case 2: $A$ has two complex eigenvalues $\alpha_1$ and $ \alpha_2$.}

In this case $a_2^2 + 4a_1<0$,
$$
\alpha_1 = \frac{a_2 }{2} + i \beta, \quad \text{and}\quad \alpha_2 = \frac{a_2 }{2} - i \beta,
$$
where $\beta:= \frac{1}{2}  \sqrt{-(a_2^2 + 4a_1)}$.

Now, we consider the complex sequence $\{\Lambda_{\alpha,\mu,n}\}_{n\geq 1}$, with
\begin{equation}\label{maindef2}
\begin{aligned}
&\Lambda_{\alpha,\mu,2n-1}= \lambda_{\alpha,\mu, n}^{(2)}+\alpha_2 = \lambda_{\alpha,\mu,n},\quad \forall n \geq1,\\
&\Lambda_{\alpha,\mu,2n}= \lambda_{\alpha,\mu, n}^{(1)} + \alpha_2 =\lambda_{\alpha,\mu,n} - 2i\beta, \quad \forall n \geq1.
\end{aligned}
\end{equation}
Let us check if the hypotheses in Theorem \ref{thm-biorth} hold true for $\big\{ \Lambda_{\alpha,\mu, n} \big\}_{n\geq1}$.\\
First, it is clearly that the sequence $\big\{ \Lambda_{\alpha,\mu, n} \big\}_{n\geq1}$ always satisfies the hypothesis 1).
Furthermore, the hypothesis 2) follows directly from the fact that
$$
\Re (\Lambda_{\alpha,\mu,2n}) = \Re (\Lambda_{\alpha,\mu,2n-1}) = \lambda_{\alpha,\mu,n}>0. $$
The hypothesis 3) is clearly fulfilled. Indeed, one can find $\delta>0$ (which depends on $\beta$) such that
$$|\Im (\Lambda_{\alpha,\mu,2n})| =2 \beta \leq \delta  \sqrt{\Re (\Lambda_{\alpha,\mu,2n})}$$
and
$$|\Im (\Lambda_{\alpha,\mu,2n-1})|= 0 \leq \delta \sqrt{\Re (\Lambda_{\alpha,\mu,2n-1})}.$$
Let us now prove hypothesis 4).
To this end, it suffices to prove that there exists an integer $\tilde{n}_0\geq 1$ such that for all $n\geq \tilde{n}_0$ $|\Lambda_{\alpha,\mu,2n}| \leq |\Lambda_{\alpha,\mu,2n+1}|$. Using \eqref{gap}, we have
\begin{align*}
|\Lambda_{\alpha,\mu,2n+1}|^2 - |\Lambda_{\alpha,\mu,2n}|^2&= \lambda_{\alpha,\mu,n+1}^2 - \lambda_{\alpha,\mu,n}^2 - 4\beta^2\\
&\geq (\lambda_{\alpha,\mu,n+1} - \lambda_{\alpha,\mu,n})^2 - 4\beta^2\\
& \geq \rho^2 |(n+1)^2 - n^2|^2- 4\beta^2\\
& = \rho^2 (2n+1)^2 - 4\beta^2,
\end{align*}
which implies that
$$\displaystyle\lim_{n \rightarrow +\infty}\big(|\Lambda_{\alpha,\mu,2n+1}|^2 - |\Lambda_{\alpha,\mu,2n}|^2\big)=+\infty.$$
Therefore, there exists $\tilde{n}_0\geq 1$ such that $\{\Lambda_{\alpha,\mu,n}\}_{n\geq 2\tilde{n}_0} $ is nondecreasing in modulus. This shows hypothesis 4).

Let us now check if the hypothesis 5) holds true.
To this aim, we choose to arrange the sequence $\{\Lambda_{\alpha,\mu,n}\}_{n\geq1} $ defined in \eqref{maindef2} as
follows:
\begin{equation*}
\begin{aligned}
\big\{ \Lambda_{\alpha,\mu, n} \big\}_{1\leq n \leq 2\tilde{n}_0 -2} &= \{\lambda_{\alpha,\mu, n}^{(1)} + \alpha_2\}_{1\leq n\leq \tilde{n}_0-1} \cup \{\lambda_{\alpha,\mu, n}^{(2)}+ \alpha_2\}_{1\leq n\leq \tilde{n}_0-1}\quad \text{and}\\
&\quad |\Lambda_{\alpha,\mu, n}| < |\Lambda_{\alpha,\mu, n+1}|\quad \forall n: 1\leq n \leq 2\tilde{n}_0 -3.
\end{aligned}
\end{equation*}
Moreover, from $(2\tilde{n}_0-1)$-th term, we set:
\begin{equation}\label{maindef3}
\Lambda_{\alpha,\mu, 2n-1} = \lambda_{\alpha,\mu, n}^{(2)} + \alpha_2 \quad \text{and}\quad \Lambda_{\alpha,\mu, 2n}= \lambda_{\alpha,\mu, n}^{(1)} + \alpha_2,\; \forall n \geq \tilde{n}_0.
\end{equation}
First, observe that the second property is actually satisfied for any $q$.
Our next objective will be to prove the first inequality in \eqref{strong-gap}.
Arguing as done in the real case, by Lemma \ref{gapresult}, there exists $\rho >0$ such that
\begin{equation*}
|\Lambda_{\alpha,\mu,2n} - \Lambda_{\alpha,\mu,2m}|  \geq  \frac{\rho}{4} |(2n)^2-(2m)^2|,\quad \forall n,m\geq \tilde{n}_0
\end{equation*}
and
\begin{equation*}
|\Lambda_{\alpha,\mu,2n-1} -\Lambda_{\alpha,\mu,2m-1}| \geq \frac{\rho}{4} |(2n-1)^2 -(2m-1)^2|,\quad \forall n,m\geq \tilde{n}_0.
\end{equation*}
Moreover, denoting $\tilde{n} = 2n$ and $\tilde{m} = 2m-1 $, one can prove that there exists $ q \geq \max\{4, 2\tilde{n}_0-1\}$ such that $\forall \tilde{n}, \tilde{m} \geq q$ with $ |\tilde{n}- \tilde{m}| \geq q$, we have
\begin{equation*}
|\Lambda_{\alpha,\mu,\tilde{n}} - \Lambda_{\alpha,\mu,\tilde{m}}|^2=|\Lambda_{\alpha,\mu,2n} - \Lambda_{\alpha,\mu,2m-1}|^2
\geq \Big(\frac{\rho}{8} |\tilde{n}^2 -  \tilde{m}^2| \Big)^2.
\end{equation*}
Indeed, by \eqref{gap}, for $\tilde{n}, \tilde{m} \geq 2\tilde{n}_0-1$ we have
\begin{align*}
|\Lambda_{\alpha,\mu,\tilde{n}} - \Lambda_{\alpha,\mu,\tilde{m}}|^2&=|\Lambda_{\alpha,\mu,2n} - \Lambda_{\alpha,\mu,2m-1}|^2 \\
&= \big|\lambda_{\alpha,\mu,n} - \lambda_{\alpha,\mu,m}\big|^2 + 4\beta^2  \\
&\geq \big|\lambda_{\alpha,\mu,n} - \lambda_{\alpha,\mu,m}\big|^2\\
&\geq \Big(\rho |n^2 - m^2|\Big)^2
=\Big(\frac{\rho}{4} |\tilde{n}^2 -  \tilde{m}^2 - 2\tilde{m}-1|\Big)^2.
\end{align*}
Next, if  $|\tilde{n}- \tilde{m}| \geq 4$, simple computation gives
\begin{align*}
|\tilde{n}^2 -  \tilde{m}^2 - 2\tilde{m}-1| &= |(\tilde{n}^2 -  \tilde{m}^2)(1 - \frac{2\tilde{m}+1}{\tilde{n}^2 -  \tilde{m}^2})| \\
&\geq \frac{1}{2} |\tilde{n}^2- \tilde{m}^2|.
\end{align*}
Hence, the conclusion follows by working with $q$ given by
$$
q \geq \max\{4, 2\tilde{n}_0-1\}.
$$

Finally, proceeding as in the real case, it is not difficult to obtain some suitable parameters $p$ and $s$ for which the inequality \eqref{ineq-counting} holds.

\noindent\textbf{Case 3: $A$ has a double eigenvalue.}

In this case $a_2^2 + 4a_1=0$. We denote by $\alpha=\frac{a_2}{2}\in \mathbb{R}$ the eigenvalue of $A$. Thus, the sequence $\big\{ \Lambda_{\alpha,\mu, n} \big\}_{n\geq1}$ is then reduced to $\{\lambda_{\alpha,\mu, n}\}_{n\geq1}$. In view of Lemma \ref{gapresult}, and reasoning as in the first case, we  automatically conclude that $\big\{ \Lambda_{\alpha,\mu, n} \big\}_{n\geq1}$ fulfills all the hypotheses in Theorem \ref{thm-biorth}. This complete the proof of Proposition \ref{prop-incres-seq}.
\end{proof}
We will finish this section giving a result on the set of eigenfunctions of the operators $L$ and
$L^*$. It reads as follows:
\begin{proposition}\label{rieszbasisresult}
Let us consider the sequences
\begin{equation}\label{BB*}
\mathcal{B} = \big\{\psi_n^{(1)} , \psi_n^{(2)}, \quad n \geq 1 \big\} \quad \text{and} \quad
\mathcal{B}^* = \big\{\Psi_n^{(1)} , \Psi_n^{(2)}, \quad n \geq 1 \big\}.
\end{equation}
Then,
\begin{enumerate}
\item $\mathcal{B}$ and $\mathcal{B}^*$ are biorthogonal families in $L^2 (0, 1)^2$.
\item $\mathcal{B}$ and $\mathcal{B}^*$ are complete sequences in $L^2 (0, 1)^2$.
\item The sequences $\mathcal{B}$ and $\mathcal{B}^*$ are biorthogonal Riesz bases of $L^2 (0, 1)^2$.
\item The sequence $\mathcal{B}^*$ is a basis of $H_{\alpha,0}^{1,\mu}(0,1)^2$ and $\mathcal{B}$ is its biorthogonal basis in $H^{-1,\mu}_{\alpha}(0,1)^2$.
\end{enumerate}
\end{proposition}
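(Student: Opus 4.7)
The plan is to exploit the tensor-product structure $\psi_n^{(i)} = U_i\,\Phi_{\alpha,\mu,n}$ and $\Psi_n^{(j)} = V_j\,\Phi_{\alpha,\mu,n}$ given in Proposition \ref{propo-eigenf-LL*}, so that each of the four claims reduces to (i) the orthonormality of $\{\Phi_{\alpha,\mu,n}\}_{n\geq 1}$ in $L^2(0,1)$ (Proposition \ref{spectrum}) and (ii) elementary linear algebra on the pairs $\{U_1,U_2\}$, $\{V_1,V_2\}$ in $\mathbb{R}^2$. Throughout I would work in cases 1 and 2 of Proposition \ref{propo-eigenf-LL*}, where $\alpha_1\neq \alpha_2$, so that $\{U_1,U_2\}$ and $\{V_1,V_2\}$ are both bases of $\mathbb{R}^2$ (the double-eigenvalue case can then be handled separately by a Jordan-block adaptation).

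For item 1, I would simply compute
$$\langle \psi_n^{(i)}, \Psi_m^{(j)}\rangle = (U_i\cdot V_j)\,\langle \Phi_{\alpha,\mu,n},\Phi_{\alpha,\mu,m}\rangle = (U_i\cdot V_j)\,\delta_{nm},$$
and verify algebraically that $U_i\cdot V_j=\delta_{ij}$. The key identity here is $\alpha_1\alpha_2=\det A=-a_1$, which makes the off-diagonal dot products $U_1\cdot V_2$ and $U_2\cdot V_1$ vanish, while the diagonal ones reduce to $1$ after cancellation of the normalizing factors in $U_1$ and $V_2$.

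For items 2 and 3, I would use the basis property of $\{U_1,U_2\}$ in $\mathbb{R}^2$ to decompose any $F\in L^2(0,1)^2$ uniquely as $F(x)=g_1(x)U_1+g_2(x)U_2$ with $g_1,g_2\in L^2(0,1)$, and then expand each $g_i$ in the orthonormal basis $\{\Phi_{\alpha,\mu,n}\}$; this yields an expansion of $F$ in $\mathcal{B}$, proving completeness. For the Riesz bound, let $G$ denote the Gram matrix $G_{ij}=U_i\cdot U_j$, which is positive definite with eigenvalues $0<\lambda_-\leq \lambda_+$. A direct computation then gives
$$\lambda_-\sum_{n,i}|c_{n,i}|^2 \leq \Bigl\|\sum_{n,i}c_{n,i}\psi_n^{(i)}\Bigr\|_{L^2(0,1)^2}^2 \leq \lambda_+\sum_{n,i}|c_{n,i}|^2,$$
which is precisely the Riesz basis characterization. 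The same argument applied to the Gram matrix of $\{V_1,V_2\}$ handles $\mathcal{B}^*$.

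For item 4, I would rely on the fact that the scalar operator $-\partial_x(x^\alpha\partial_x\cdot)-\mu x^{\alpha-2}$ is self-adjoint, positive, and has compact resolvent on $L^2(0,1)$ with form domain $H_{\alpha,0}^{1,\mu}(0,1)$; this is the standard consequence of the Hardy inequality \eqref{hardyinequality}. Therefore $\{\Phi_{\alpha,\mu,n}/\sqrt{\lambda_{\alpha,\mu,n}}\}_{n\geq 1}$ is an orthonormal basis of $H_{\alpha,0}^{1,\mu}(0,1)$ equipped with its natural inner product $\int_0^1(x^\alpha u_x v_x-\mu x^{\alpha-2}uv)\,dx$. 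The same Gram-matrix argument as in item 3 then upgrades $\mathcal{B}^*$ to a Riesz basis of $H_{\alpha,0}^{1,\mu}(0,1)^2$. Finally, since the $L^2$-pairing extends by density to the duality bracket $\langle\cdot,\cdot\rangle_{H_{\alpha}^{-1,\mu},H_{\alpha,0}^{1,\mu}}$, the biorthogonality established in item 1, combined with the Riesz basis property of $\mathcal{B}^*$ in $H_{\alpha,0}^{1,\mu}(0,1)^2$, forces $\mathcal{B}$ to be the associated dual Riesz basis in $H_{\alpha}^{-1,\mu}(0,1)^2$ by a standard duality argument. The main technical point, and the only place where one has to be careful, is the identification of the correct Hilbert structure on $H_{\alpha,0}^{1,\mu}(0,1)$ that makes the rescaled eigenfunctions orthonormal; everything else is routine once the tensor decomposition is in hand.
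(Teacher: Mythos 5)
Your proposal follows the same overall strategy as the paper: exploit the tensor structure $\psi_n^{(i)}=U_i\Phi_{\alpha,\mu,n}$, $\Psi_n^{(j)}=V_j\Phi_{\alpha,\mu,n}$ and reduce everything to linear algebra on $\{U_1,U_2\}$, $\{V_1,V_2\}$ together with the basis properties of $(\Phi_{\alpha,\mu,n})_{n\ge1}$. Item 1 is identical (and your identification of $\alpha_1\alpha_2=\det A=-a_1$ as the identity killing the off-diagonal products $U_1\cdot V_2$ and $U_2\cdot V_1$ is exactly what makes the computation work). For items 2--3 the paper invokes the abstract criteria of Heil's book (completeness via testing against the biorthogonal family, and the characterization ``complete Bessel sequence with complete Bessel biorthogonal system $\Rightarrow$ Riesz basis''), whereas you prove the two-sided Riesz inequality directly from the positive-definiteness of the Gram matrix of $\{U_1,U_2\}$; your route is more self-contained and arguably cleaner, while the paper's avoids writing out the frame bounds. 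Your explicit flagging of the double-eigenvalue case (where $U_1,U_2$ degenerate and $V_2$ is not even defined, since $\alpha_1^2+a_1=\alpha_1(\alpha_1-\alpha_2)=0$) is a point the paper silently ignores.

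There is, however, one misstatement in your item 4 that needs repair. Since $\bigl\{\Phi_{\alpha,\mu,n}/\sqrt{\lambda_{\alpha,\mu,n}}\bigr\}_{n\ge1}$ is the orthonormal basis of $H_{\alpha,0}^{1,\mu}(0,1)$ for the energy inner product, your Gram-matrix computation gives
\begin{equation*}
\Bigl\|\sum_{n,j}c_{n,j}\Psi_n^{(j)}\Bigr\|_{H_{\alpha,0}^{1,\mu}}^2 \asymp \sum_{n}\lambda_{\alpha,\mu,n}\sum_{j}|c_{n,j}|^2 ,
\end{equation*}
and since $\lambda_{\alpha,\mu,n}\to+\infty$ the family $\mathcal{B}^*$ is \emph{not} a Riesz basis of $H_{\alpha,0}^{1,\mu}(0,1)^2$; only the renormalized family $\{\Psi_n^{(j)}/\sqrt{\lambda_{\alpha,\mu,n}}\}$ is. This does not sink the argument, because the proposition only claims that $\mathcal{B}^*$ is a \emph{basis} of $H_{\alpha,0}^{1,\mu}(0,1)^2$ (which follows, since a nonzero rescaling of a Riesz basis is still an unconditional basis), and the duality step then identifies $\mathcal{B}$ as the unique biorthogonal basis in $H_{\alpha}^{-1,\mu}(0,1)^2$ exactly as you say. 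The paper reaches the same conclusion by verifying minimality plus convergence of the expansion $S(f)$ via Heil's basis criterion, again using that $(\Phi_{\alpha,\mu,n})$ is an orthogonal (not orthonormal) basis of $H_{\alpha,0}^{1,\mu}(0,1)$. With the word ``Riesz'' either deleted or replaced by the renormalized family in that one sentence, your proof is correct.
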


\begin{proof}
From the expressions of $\psi_n^{(i)}$ and $\Psi_n^{(i)}$, we can write
$$
\psi_n^{(i)} = U_{i} \Phi_{\alpha,\mu, n} \quad \text{and}\quad \Psi_n^{(i)} = V_{i} \Phi_{\alpha,\mu, n},\quad i=1,2,\quad  n\geq 1,
$$
where $ U_{i}, V_{i} \in \mathbb{R}^2$ and $\Phi_{\alpha,\mu, n}$ is given in \eqref{eigenfunctions}.

\smallskip
\begin{enumerate}
\item It is not difficult to check that $\{U_i\}_{i=1,2}$ and $\{V_i\}_{i=1,2}$ are
biorthogonal families of $\mathbb{R}^2$. Moreover,
since $(\Phi_{\alpha,\mu, n})_{n\geq1}$ is an orthonormal basis for $L^2(0,1)$, we readily deduce
$$
\langle \psi_n^{(i)},\Psi_k^{(j)} \rangle =  (U_{i})^{tr} V_{j} \langle \Phi_{\alpha,\mu, n},\Phi_{\alpha,\mu, k} \rangle = \delta_{ij} \delta_{nk}, \quad \forall n,k\geq 1,\quad i,j=1, 2.
$$
This proves the claim.
\item We will use \cite[Lemma 1.44]{chris}. For this purpose, let us consider $f= (f_1, f_2)^{tr} \in L^2(0,1)^2$ such that
$$
\langle f  , \psi_n^{(i)} \rangle = 0,\quad \forall n\geq 1, \quad i=1, 2.
$$
If
we denote $f_{i,n}$ $(i = 1,2)$ the corresponding Fourier coefficients of the function $f_i\in L^2(0,1)$ with respect to the basis $(\Phi_{\alpha,\mu, n})_{n\geq1}$, then the previous equality can be written as
$$
(f_{1,n} , f_{2,n} ) [U_{1} | U_{2}]= 0_{\mathbb{R}^{2}}, \quad \forall n\geq 1.
$$
Using the fact that $det [U_{1} | U_{2}] \neq 0$, we deduce $f_{1,n} = f_{2,n} = 0$, for all $n\geq 1$. This implies that $f_1=f_2=0$ (since $(\Phi_{\alpha,\mu, n})_{n\geq1}$ is an orthonormal basis in $L^2(0,1)$) and, therefore,
$f = 0$ which proves the completeness of $\mathcal{B}$.
A similar argument can be used for $\mathcal{B}^*$ and the conclusion follows immediately.
\item By \cite[Theorem 7.13]{chris}, we know that $\big\{\psi_n^{(1)} , \psi_n^{(2)}\big\}_{n \geq 1}$ is a Riesz basis for $L^2(0,1)^2$ if and only if
$\big\{\psi_n^{(1)} , \psi_n^{(2)}\big\}_{n \geq 1}$ is a complete Bessel sequence and possesses a biorthogonal system
that is also a complete Bessel sequence. Using the previous properties $1)$ and $2)$, we only have to prove that the sequence $\big\{\psi_n^{(1)} , \psi_n^{(2)}\big\}_{n \geq 1}$ and $\big\{\Psi_n^{(1)} , \Psi_n^{(2)}\big\}_{n \geq 1}$ are Bessel sequences. This amounts to prove that the series
\begin{align*}
&S_1(f)= \sum_{n\geq 1} \big[ \langle f  , \psi_n^{(1)} \rangle^2 + \langle f  , \psi_n^{(2)} \rangle^2 \big] \;
\text{and}\\
&S_2(f)= \sum_{n\geq 1} \big[ \langle f  , \Psi_n^{(1)} \rangle^2 + \langle f  , \Psi_n^{(2)} \rangle^2 \big]
\end{align*}
converge for any $f= (f_1, f_2)^{tr} \in L^2(0,1)^2$.

From the definition of the functions $\psi_n^{(i)}$ and $\Psi_n^{(i)}$, it is easy to see that there exists some constant $C>0$ such that
\begin{align*}
&S_1(f) \leq C \sum_{n\geq 1} \big( |f_{1,n}|^2 + |f_{2,n}|^2 \big) \quad
\text{and}\\
&S_2(f) \leq C \sum_{n\geq 1} \big( |f_{1,n}|^2 + |f_{2,n}|^2 \big).
\end{align*}
Recall that $f_{i,n}$ is the Fourier coefficient of the function $f_{i}\in L^2(0,1)$ ($i=1,2$) with respect to $\Phi_{\alpha,\mu, n}$. Accordingly, the series $S_1(f)$ and $S_2(f)$ converge since $(\Phi_{\alpha,\mu, n})_{n\geq1}$ is an orthonormal basis for $L^2(0,1)$. We obtain thus the proof of desired result.
\item For showing item $4)$ we make use of \cite[Theorem 5.12]{chris}.
First, by taking $L^2(0,1)$ as a pivot space, one has
$$
H_{\alpha,0}^{1,\mu}(0,1) \subset  L^2(0,1)  \subset \big( H_{\alpha,0}^{1,\mu}(0,1) \big)^{'} = H_\alpha^{-1,\mu}(0,1).
$$
Furthermore, observe that $\mathcal{B}^* \subset H_{\alpha,0}^{1,\mu}(0,1)^2 $ and is complete in this space since it is in $L^2(0,1)^2$.
On the other hand, by the definition of the duality pairing, we have
$$\langle \psi_n^{(i)},\Psi_k^{(j)}\rangle_{H^{-1,\mu}_\alpha, H_{\alpha,0}^{1,\mu}} = \langle \psi_n^{(i)},\Psi_k^{(j)} \rangle=\delta_{ij} \delta_{nk},\quad \forall n,k\geq 1, \quad i,j=1,2.
$$
Thus, $\mathcal{B} \subset H_\alpha^{-1,\mu}(0,1)^2 $ and is biorthogonal to $\mathcal{B}^*$, which also yields that $\mathcal{B}^*$ is minimal in $H_{\alpha,0}^{1,\mu}(0,1)^2$ thanks to \cite[Lemma 5.4]{chris}.
To conclude the proof, it remains to prove that for any $f=(f_1,f_2)\in H_{\alpha,0}^{1,\mu}(0,1)^2$, the series
$$
S(f)= \sum_{n\geq 1} \big[ \langle \psi_n^{(1)}, f \rangle_{H^{-1,\mu}_\alpha, H_{\alpha,0}^{1,\mu}} \Psi_n^{(1)} + \langle  \psi_n^{(2)}, f \rangle_{H^{-1,\mu}_\alpha, H_{\alpha,0}^{1,\mu}} \Psi_n^{(2)}\big]
$$
converges in $H_{\alpha,0}^{1,\mu}(0,1)^2 $.

Using again the definitions of $ \psi_n^{(i)}$ and $\Psi_n^{(i)}$, one can prove that
\begin{equation*}
\langle \psi_n^{(1)}, f \rangle_{H^{-1,\mu}_\alpha, H_{\alpha,0}^{1,\mu}} \Psi_n^{(1)} = \frac{a_1}{\alpha_2^2 + a_1} \begin{pmatrix} \frac{\alpha_2^2}{a_1} f_{1,n}  - \frac{\alpha_2}{a_1} f_{2,n}  \\ - \alpha_2 f_{1,n} + f_{2,n} \end{pmatrix}  \Phi_{\alpha,\mu, n}
\end{equation*}
and
\begin{equation*}
\langle  \psi_n^{(2)}, f \rangle_{H^{-1,\mu}_\alpha, H_{\alpha,0}^{1,\mu}} \Psi_n^{(2)} = \frac{a_1}{\alpha_1^2 + a_1} \begin{pmatrix} \frac{\alpha_1^2}{a_1} f_{1,n}  - \frac{\alpha_1}{a_1} f_{2,n}  \\ - \alpha_1 f_{1,n} + f_{2,n} \end{pmatrix}  \Phi_{\alpha,\mu, n}
\end{equation*}
where $f_{i,n}$ is the Fourier coefficient of the function $f_i \in H_{\alpha,0}^{1,\mu}(0,1)$, $i= 1,2$.

But, we know that the series $ \sum\limits_{n\geq 1} f_{i,n} \Phi_{\alpha,\mu, n}$, $i= 1,2$ converges in $H_{\alpha,0}^{1,\mu}(0,1)$ since $(\Phi_{\alpha,\mu, n})_{n\geq1} $ is an orthogonal basis for $H_{\alpha,0}^{1,\mu}(0,1)$ and $f_1, f_2 \in H_{\alpha,0}^{1,\mu}(0,1)$.
This implies that, the series
\begin{equation*}
\sum\limits_{n\geq 1} \langle \psi_n^{(1)}, f \rangle_{H^{-1,\mu}_\alpha, H_{\alpha,0}^{1,\mu}} \Psi_n^{(1)}\;\text{and}\; \sum\limits_{n\geq 1} \langle  \psi_n^{(2)}, f \rangle_{H^{-1,\mu}_\alpha, H_{\alpha,0}^{1,\mu}} \Psi_n^{(2)}
\end{equation*}
converge in $H_{\alpha,0}^{1,\mu}(0,1)^2$ and assure the convergence of $S(f)$ in $H_{\alpha,0}^{1,\mu}(0,1)^2$. This concludes the proof of the result.
\end{enumerate}
\end{proof}


\section{Boundary approximate controllability}\label{Section-approx}
We will devote this section to proving the approximate controllability at time $T > 0$ of
system \eqref{bound-sys}. To this aim, we are going to use Theorem \ref{thm-biorth0}. 

First of all, using Lemma \ref{gapresult} and similar techniques as in Proposition \ref{prop-incres-seq}, one can prove that the following result holds.
\begin{proposition}\label{prop-incres-seq0}
Assume that condition \eqref{spectre-cond} holds. Then, the family defined in \eqref{increas-sequence}
satisfies \eqref{Cv-gap-orthog0}.
\end{proposition}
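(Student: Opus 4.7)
The plan is to verify the three items in \eqref{Cv-gap-orthog0} for the rearranged family
$\{\Lambda_{\alpha,\mu,n}\}_{n\geq 1}$, proceeding case-by-case on the spectrum of $A$ exactly as in Proposition \ref{prop-incres-seq}, but taking advantage of the fact that the conditions here are strictly weaker than those of Theorem \ref{thm-biorth} already verified there. Condition \eqref{spectre-cond} is what guarantees that the elements of the sequence are pairwise distinct in the real case, and it is automatic in the complex and double-eigenvalue cases.

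First I would check $\Re(\Lambda_n)\geq \delta|\Lambda_n|$. When the eigenvalues $\alpha_1,\alpha_2$ of $A$ are real, the whole family $\{\Lambda_{\alpha,\mu,n}\}$ consists of positive real numbers (since $\lambda_{\alpha,\mu,n}>0$ and $\lambda_{\alpha,\mu,n}+\alpha_2-\alpha_1>0$), so this holds with $\delta=1$. In the complex case, $\Lambda_{\alpha,\mu,2n-1}=\lambda_{\alpha,\mu,n}$ and $\Lambda_{\alpha,\mu,2n}=\lambda_{\alpha,\mu,n}-2i\beta$, so it suffices to observe that
\begin{equation*}
\frac{\Re(\Lambda_{\alpha,\mu,2n})}{|\Lambda_{\alpha,\mu,2n}|}=\frac{\lambda_{\alpha,\mu,n}}{\sqrt{\lambda_{\alpha,\mu,n}^2+4\beta^2}}\geq \frac{\lambda_{\alpha,\mu,1}}{\sqrt{\lambda_{\alpha,\mu,1}^2+4\beta^2}}=:\delta>0,
\end{equation*}
using that $t\mapsto t/\sqrt{t^2+c}$ is increasing in $t>0$. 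The double-eigenvalue case is trivial since $\Lambda_{\alpha,\mu,n}=\lambda_{\alpha,\mu,n}$.

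Next I would establish the linear gap $|\Lambda_n-\Lambda_m|\geq \rho|n-m|$. This is considerably weaker than the quadratic gap \eqref{strong-gap} already proved in Proposition \ref{prop-incres-seq}. Indeed, for $|n-m|\geq q$, one has $|\Lambda_n-\Lambda_m|\geq \varrho|n^2-m^2|=\varrho(n+m)|n-m|\geq \varrho|n-m|$, while for $0<|n-m|<q$, the infimum property in \eqref{strong-gap} yields some constant $c>0$ with $|\Lambda_n-\Lambda_m|\geq c\geq (c/q)|n-m|$. Taking $\rho=\min\{\varrho,c/q\}$ gives the claim; in the real case the distinctness of the shifted and unshifted $\lambda_{\alpha,\mu,n}$ needed for this infimum to be strictly positive is precisely the hypothesis \eqref{spectre-cond}.

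Finally, the convergence of the series $\sum_{n\geq 1}1/|\Lambda_n|$ follows directly from Lemma \ref{gapresult}(2). Indeed, after rearrangement each block of two terms of $\{\Lambda_{\alpha,\mu,n}\}$ is bounded below in modulus by $\lambda_{\alpha,\mu,k}-|\alpha_2-\alpha_1|$ for some $k=k(n)\sim n/2$, and for $n$ large enough $|\Lambda_n|\geq \tfrac{1}{2}\lambda_{\alpha,\mu,\lceil n/2\rceil}$, so comparison with $\sum_k 1/\lambda_{\alpha,\mu,k}<+\infty$ closes the argument. I expect the only mildly delicate point to be keeping the bookkeeping of the rearrangement and the case $\nu(\alpha,\mu)\in[0,1/2]$ versus $\nu(\alpha,\mu)\geq 1/2$ consistent with what was done in Proposition \ref{prop-incres-seq}; otherwise the proof is essentially a reading-off of estimates already available.
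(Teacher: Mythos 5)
Your proof is correct and matches the paper's intent: the paper gives no details here, merely asserting that the result follows from Lemma \ref{gapresult} and the techniques of Proposition \ref{prop-incres-seq}, and your argument fills this in by deducing the weaker conditions \eqref{Cv-gap-orthog0} directly from the stronger hypotheses of Theorem \ref{thm-biorth} already verified there. All three verifications (the sectoriality bound, the reduction of the quadratic gap to the linear one, and the comparison of $\sum 1/|\Lambda_n|$ with $\sum 1/\lambda_{\alpha,\mu,k}$) are sound.
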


Now, we are ready to state our first main result on approximate controllability. One has:
\begin{theorem} \label{Thm-approx}
Let  $\mu\leq \mu(\alpha)$ and consider $\alpha_1$ and $\alpha_2$ the eigenvalues of the matrix $A$. Then, system \eqref{bound-sys} is approximately controllable in $H^{-1,\mu}_{\alpha}(0,1)^2$ at time $T >0$ if and only if $\alpha_1$ and $\alpha_2$ satisfy condition \eqref{spectre-cond}.
\end{theorem}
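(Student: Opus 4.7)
My plan is to obtain Theorem \ref{Thm-approx} as an application of the Fattorini--Russell duality characterization recalled in Theorem \ref{approxnullcontrol}: approximate controllability at time $T$ is equivalent to the unique continuation property
\begin{equation*}
B^*(x^\alpha \varphi_x)(\cdot,1)\equiv 0 \text{ on } (0,T) \;\Longrightarrow\; \varphi_0 = 0,
\end{equation*}
for the adjoint system \eqref{adjoint-sys}. Since $B=e_1$, this observation is simply $(x^\alpha \varphi_{1,x})(t,1)$, where $\varphi_1$ denotes the first component of $\varphi$.

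For the \emph{sufficiency}, assume \eqref{spectre-cond} holds. Given $\varphi_0\in H_{\alpha,0}^{1,\mu}(0,1)^2$, I would decompose it in the Riesz basis $\mathcal{B}^*$ of $H_{\alpha,0}^{1,\mu}(0,1)^2$ provided by Proposition \ref{rieszbasisresult}, writing $\varphi_0=\sum_{n\geq 1}(a_n \Psi_n^{(1)}+b_n \Psi_n^{(2)})$. Using the fact that each $\Psi_n^{(i)}$ is an eigenfunction of $L^*$ with eigenvalue $\lambda_{\alpha,\mu,n}^{(i)}$ (Proposition \ref{propo-eigenf-LL*}), the adjoint state reads
\begin{equation*}
\varphi(t,x) = \sum_{n\geq 1}\Bigl[a_n e^{-(T-t)\lambda_{\alpha,\mu,n}^{(1)}}\Psi_n^{(1)}(x) + b_n e^{-(T-t)\lambda_{\alpha,\mu,n}^{(2)}}\Psi_n^{(2)}(x)\Bigr].
\end{equation*}
Taking the first component, differentiating in $x$, multiplying by $x^\alpha$ and evaluating at $x=1$ yields an expansion of $B^*(x^\alpha\varphi_x)(t,1)$ as a linear combination of the exponentials $e^{-(T-t)\lambda_{\alpha,\mu,n}^{(i)}}$, each multiplied by the common factor $(x^\alpha \Phi'_{\alpha,\mu,n})(1)$ together with the first coordinate of $V_1$ or $V_2$. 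By \eqref{spectre-cond} the exponents $\{\lambda_{\alpha,\mu,n}^{(1)},\lambda_{\alpha,\mu,n}^{(2)}\}_{n\geq 1}$, equivalently the shifted family $\{\Lambda_{\alpha,\mu,n}\}_{n\geq 1}$ of \eqref{increas-sequence}, are all distinct, so by Proposition \ref{prop-incres-seq0} the assumptions of Theorem \ref{thm-biorth0} are satisfied. A biorthogonal family $\{q_n\}\subset L^2(0,T)$ to $\{e^{-\Lambda_{\alpha,\mu,n}t}\}$ therefore exists; testing the vanishing of $B^*(x^\alpha\varphi_x)(t,1)$ against each $q_n$ (after the harmless rescaling $s=T-t$ and an overall shift by $e^{-\alpha_2 t}$) isolates each coefficient and forces
\begin{equation*}
a_n\,[V_1]_1\,(x^\alpha\Phi'_{\alpha,\mu,n})(1)=0,\qquad b_n\,[V_2]_1\,(x^\alpha\Phi'_{\alpha,\mu,n})(1)=0,\qquad n\geq 1.
\end{equation*}
Since the zeros $j_{\nu(\alpha,\mu),n}$ of $J_{\nu(\alpha,\mu)}$ are simple, $J'_{\nu(\alpha,\mu)}(j_{\nu(\alpha,\mu),n})\neq 0$, which implies $(x^\alpha \Phi'_{\alpha,\mu,n})(1)\neq 0$ in view of \eqref{eigenfunctions}; and the first components of $V_1,V_2$ are nonzero by the structure of the eigenvectors together with \eqref{mainhyp}. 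Thus $a_n=b_n=0$ for all $n$, giving $\varphi_0=0$.

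For the \emph{necessity}, suppose \eqref{spectre-cond} fails, so there exist $n_0\neq l_0$ with $\lambda_{\alpha,\mu,n_0}^{(1)}=\lambda_{\alpha,\mu,l_0}^{(2)}$. Then the two exponentials $e^{-(T-t)\lambda_{\alpha,\mu,n_0}^{(1)}}$ and $e^{-(T-t)\lambda_{\alpha,\mu,l_0}^{(2)}}$ coincide, and I would choose $(a_{n_0},b_{l_0})\neq(0,0)$ (and all remaining coefficients zero) so that the single common coefficient
\begin{equation*}
a_{n_0}\,[V_1]_1\,(x^\alpha \Phi'_{\alpha,\mu,n_0})(1)+b_{l_0}\,[V_2]_1\,(x^\alpha \Phi'_{\alpha,\mu,l_0})(1)
\end{equation*}
vanishes. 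The corresponding $\varphi_0=a_{n_0}\Psi_{n_0}^{(1)}+b_{l_0}\Psi_{l_0}^{(2)}$ is nonzero (because $\mathcal{B}^*$ is a basis) yet produces $B^*(x^\alpha\varphi_x)(\cdot,1)\equiv 0$, contradicting unique continuation and hence approximate controllability. The main difficulty is the bookkeeping of step~2: carefully tracking the boundary expansion through the three spectral cases (two real, two complex conjugate, double eigenvalue) and checking that the scalar factors $[V_i]_1$ and $(x^\alpha\Phi'_{\alpha,\mu,n})(1)$ indeed do not vanish, so that biorthogonality produces the full cancellation. All the analytic ingredients are already in place through Proposition \ref{rieszbasisresult}, Proposition \ref{prop-incres-seq0}, and Theorem \ref{thm-biorth0}.
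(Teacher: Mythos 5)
Your proposal is correct and follows essentially the same route as the paper: duality via Theorem \ref{approxnullcontrol}, expansion of $\varphi_0$ in the Riesz basis $\mathcal{B}^*$ and extraction of the coefficients with the biorthogonal family of Theorem \ref{thm-biorth0} (via Proposition \ref{prop-incres-seq0}) for sufficiency, and an explicit two-mode combination $a\Psi_{n_0}^{(1)}+b\Psi_{l_0}^{(2)}$ killing the boundary observation for necessity. The paper carries out the same computation, with the nonvanishing of $(x^\alpha\Phi'_{\alpha,\mu,n})(1)$ and of the first components of $V_1,V_2$ used implicitly exactly as you indicate.
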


\begin{proof}
As said in section \ref{Section-prel}, in order to prove this theorem we will follow  a duality approach leading us to
study the unique continuation property for the adjoint system.

\smallskip
\noindent\textbf{Necessary condition:}
By contradiction, let us assume that condition \eqref{spectre-cond} does not hold, i.e., that there is $n_0, l_0 \in \mathbb{N}^*$ with $n_0\neq l_0$ such that
\begin{equation*}
\lambda_{\alpha,\mu ,n_0}^{(1)} = \lambda_{\alpha,\mu,l_0}^{(2)}:= \lambda.
\end{equation*}
Let us see that the unique continuation property for
the adjoint system \eqref{adjoint-sys} is no longer valid. Indeed, let us take
$\varphi_0= a \Psi_{n_0}^{(1)} + b \Psi_{l_0}^{(2)} \in H_{\alpha,0}^{1,\mu}(0,1)^2$, with $a, b \in \mathbb{R}$ to be determined. In this case, it is not difficult to see that the corresponding solution to the adjoint problem \eqref{adjoint-sys} is
given by
$$
\varphi(t,x) = (a  \Psi_{n_0}^{(1)}(x) + b \Psi_{l_0}^{(2)}(x) ) e^{-\lambda (T-t)},\quad \forall (t,x)\in Q.
$$
By recalling the definition of $\Psi_{n}^{(i)}$, we have that
\begin{align*}
B^* (x^\alpha\varphi_x) (t, 1)
&= B^* \big(a  \Psi_{n_0,x}^{(1)}(1) + b \Psi_{l_0,x}^{(2)}(1) \big) e^{-\lambda (T-t)} \\
& =  \Big( a B^* V_{1} (x^\alpha(\Phi_{\alpha,\mu, n_0})_x)(1) + b B^*  V_{2}  (x^\alpha(\Phi_{\alpha,\mu, l_0})_x)(1) \Big)e^{-\lambda (T-t)}\\
& =  - \Big(a  \frac{\alpha_2}{a_1}   (x^\alpha(\Phi_{\alpha,\mu, n_0})_x)(1)  + b \frac{\alpha_1}{\alpha_1^2 + a_1} (x^\alpha(\Phi_{\alpha,\mu, l_0})_x)(1) \Big) e^{-\lambda (T-t)}.
\end{align*}
Then, choosing
\begin{align*}
&a =\frac{\alpha_1}{\alpha_1^2 + a_1} (x^\alpha(\Phi_{\alpha,\mu, n_0})_x)(1) \quad \text{and} \\
&b = -\frac{\alpha_2}{a_1}   (x^\alpha(\Phi_{\alpha,\mu, l_0})_x)(1),
\end{align*}
we get that $B^*(x^\alpha\varphi_x)(t, 1)=0$ but $\varphi_0\neq 0$, which proves that the unique continuation property for the adjoint system \eqref{adjoint-sys} fails to be true. This ends the proof of the necessary part.

\smallskip
\noindent\textbf{Sufficient condition:} Let us now assume that the condition \eqref{spectre-cond} holds and
prove that the unique continuation property for the solutions of the adjoint system \eqref{adjoint-sys} holds.

Let us consider $\varphi_0 \in  H_{\alpha,0}^{1,\mu}(0,1)^2$ and suppose that the corresponding solution $\varphi$ of the adjoint
problem \eqref{adjoint-sys} satisfies
\begin{equation}\label{hyp-null-observ}
B^*(x^\alpha \varphi_x) (t, 1)  = 0, \quad \forall t\in (0,T).
\end{equation}
From Proposition \ref{rieszbasisresult}, we know that $\mathcal{B}^*$ is a basis for $H_{\alpha,0}^{1,\mu}(0,1)^2$ and thus $\varphi_0\in H_{\alpha,0}^{1,\mu}(0,1)^2$ can be written as
\begin{equation*}
\varphi_0 = \sum_{n\geq 1} ( b_n \Psi_n^{(1)} + c_n \Psi_n^{(2)} ),
\end{equation*}
where
\begin{equation*}
b_n = \langle \psi_n^{(1)}, \varphi_0 \rangle_{H^{-1,\mu}_\alpha,H_{\alpha,0}^{1,\mu}} \quad \text{and}\quad c_n = \langle \psi_n^{(2)}, \varphi_0 \rangle_{H^{-1,\mu}_\alpha ,H_{\alpha,0}^{1,\mu}},\quad \text{for any}\quad n\geq 1.
\end{equation*}
Using Proposition \ref{propo-eigenf-LL*}, the corresponding solution $\varphi$ of system \eqref{adjoint-sys} associated to $\varphi_0$ is given by
\begin{equation*}
\varphi(t,\cdot)= \sum_{n\geq 1} \Big(b_n \Psi_{n}^{(1)}e^{-\lambda_{\alpha,\mu,n}^{(1)}(T-t)}
 +
c_n \Psi_{n}^{(2)} e^{-\lambda_{\alpha,\mu,n}^{(2)}(T-t)}\Big),\quad \forall t\in(0,T).
\end{equation*}
On the other hand, direct computations show that
\begin{equation*}
\big(x^\alpha(\Phi_{\alpha,\mu, n})_x\big)(1) =  \frac{(2-\alpha)^{\frac{3}{2}} j_{\nu(\alpha,\mu),n} }{2|J'_{\nu(\alpha,\mu)}(j_{\nu(\alpha,\mu) , n})|} J'_{\nu(\alpha,\mu)}(j_{\nu(\alpha,\mu), n} ) .
\end{equation*}
Hence,
\begin{equation}\label{Ineqappro1}
\begin{aligned}
&0=B^*(x^\alpha \varphi_x) (T-t, 1)\notag\\
&= \sum_{n\geq 1}  B^* \big( b_n  (x^\alpha \Psi_{n,x}^{(1)})(1) e^{-\lambda_{\alpha,\mu, n}^{(1)} t} + c_n  (x^\alpha \Psi_{n,x}^{(2)})(1) e^{-\lambda_{\alpha,\mu, n}^{(2)} t} \big)\notag\\
&= \big(x^\alpha(\Phi_{\alpha,\mu, n})_x\big)(1) \big( b_n B^* V_{1} e^{-\lambda_{\alpha,\mu, n}^{(1)} t} + c_n B^* V_{2} e^{-\lambda_{\alpha,\mu, n}^{(2)} t} \big)\notag\\
&= - \dfrac{(2-\alpha)^\frac{3}{2}}{2}  \sum_{n\geq 1} \dfrac{J'_{\nu(\alpha,\mu)} (j_{\nu(\alpha,\mu), n} ) }{|J'_{\nu(\alpha,\mu)}(j_{\nu(\alpha,\mu), n})|} \frac{j_{\nu_{{(\alpha,\mu)}, n}}}{a_1} \big( b_n \alpha_2 e^{-\lambda_{\alpha,\mu, n}^{(1)} t} + c_n \alpha_1 \frac{a_1}{\alpha_1^2 + a_1} e^{-\lambda_{\alpha,\mu, n}^{(2)} t} \big) \\
&= - \dfrac{(2-\alpha)^\frac{3}{2}}{2}   \sum_{n\geq 1} \dfrac{J'_{\nu(\alpha,\mu)} (j_{\nu(\alpha,\mu), n})}{|J'_{\nu(\alpha,\mu)}(j_{\nu(\alpha,\mu), n})|} \frac{j_{\nu(\alpha,\mu), n}}{a_1} e^{\alpha_2 t} \big( b_n \alpha_2 e^{-(\lambda_{\alpha,\mu, n}^{(1)}+\alpha_2)t} +  c_n \alpha_1 \frac{a_1}{\alpha_1^2 + a_1} e^{-(\lambda_{\alpha,\mu, n}^{(2)}+\alpha_2) t} \big).
\end{aligned}
\end{equation}
From Proposition \ref{prop-incres-seq0}, we can apply Theorem \ref{thm-biorth0} in order to deduce the existence of a biorthogonal
family $\{q_n^{(1)}, q_n^{(2)} \}_{n\geq1}$ to
$\{e^{-(\lambda_{\alpha,\mu, n}^{(1)}+\alpha_2) t} , e^{-(\lambda_{\alpha,\mu, n}^{(2)}+\alpha_2) t}  \}_{n\geq1}$ in $ L^2(0,T)$.
Thus, the previous identity, in particular, implies
\begin{equation*}
\left\{
\begin{array}{lll}
\int_{0}^{T} B^* (x^\alpha \varphi_x) (T-t, 1) \, e^{- \alpha_2 t} \, q_n^{(1)} (t) \, dt &=
- \dfrac{(2-\alpha)^\frac{3}{2}}{2} \dfrac{J'_{\nu(\alpha,\mu)} (j_{\nu(\alpha,\mu), n}) }{|J'_{\nu(\alpha,\mu)}(j_{\nu(\alpha,\mu), n})|} \frac{j_{\nu(\alpha,\mu), n}}{a_1} b_n \alpha_2 \\
 &=0, \quad \forall n \geq 1\\\\
\int_{0}^{T}  B^*(x^\alpha \varphi_x)  (T-t, 1) \, e^{- \alpha_2 t} \, q_n^{(2)} (t) \, dt 
&= - \dfrac{(2-\alpha)^\frac{3}{2}}{2} \dfrac{J'_{\nu(\alpha,\mu)} (j_{\nu(\alpha,\mu), n}) }{|J'_{\nu(\alpha,\mu)}(j_{\nu(\alpha,\mu), n})|} \frac{j_{\nu(\alpha,\mu), n}}{a_1} c_n \alpha_1 \frac{a_1}{\alpha_1^2 + a_1} \\
&=0, \quad \forall n \geq 1.
\end{array}
\right.
\end{equation*}
Thus $b_n = c_n = 0$ for any $n\geq 1$. In conclusion, $\varphi_0=0$. This proves the continuation property for the solutions to the adjoint problem \eqref{adjoint-sys} and, thanks to Theorem \ref{approxnullcontrol}, the
approximate controllability of system  \eqref{bound-sys} at any positive time $T$ holds.
\end{proof}

\section{Boundary null controllability}\label{Section-null}

In this section, we will address the main achievement of this work which is the boundary null controllability result of system \eqref{bound-sys}, providing an estimate of the control cost as a function of $T$. In this sense, one has:

\begin{theorem}\label{Thm-null-cont}
Let  $\mu\leq \mu(\alpha)$ and consider by $\alpha_1$ and $\alpha_2$ the eigenvalues of $A$ satisfying \eqref{spectre-cond}.
Then, for every
$T > 0$ and $y_0\in H^{-1,\mu}_{\alpha}(0,1)^2$ there exists a null control $v\in L^2(0,T)$ for system \eqref{bound-sys}
which, in addition, satisfies
\begin{equation}\label{control-cost}
\|v\|_{L^2(0,T)} \leq C e^{C T+ \frac{C}{T}} \|y_0 \|_{H^{-1,\mu}_{\alpha}}.
\end{equation}
\end{theorem}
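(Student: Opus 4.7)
The plan is to implement the moment method, using Theorem \ref{thm-biorth} applied to the spectral sequence constructed in Proposition \ref{prop-incres-seq}. By Proposition \ref{mainrelation}, the identity $y(T,\cdot) = 0$ is equivalent to
\[
\int_0^T B^*(x^\alpha \varphi_x)(t,1)\,v(t)\,dt = \langle y_0, \varphi(0,\cdot)\rangle_{H^{-1,\mu}_\alpha, H^{1,\mu}_{\alpha,0}}
\]
for every $\varphi_0 \in H^{1,\mu}_{\alpha,0}(0,1)^2$. Testing against each element $\Psi_n^{(i)}$ of the basis $\mathcal{B}^*$ of $H^{1,\mu}_{\alpha,0}(0,1)^2$ provided by Proposition \ref{rieszbasisresult}, and using the fact that the associated adjoint state is $\varphi(t,x) = \Psi_n^{(i)}(x) e^{-\lambda^{(i)}_{\alpha,\mu,n}(T-t)}$ together with the explicit formula for $(x^\alpha \Phi_{\alpha,\mu,n,x})(1)$ used in the proof of Theorem \ref{Thm-approx}, the problem reduces to a moment problem
\[
\int_0^T e^{-\lambda^{(i)}_{\alpha,\mu,n}(T-t)}\,v(t)\,dt = m^{(i)}_n, \qquad n \geq 1,\ i = 1,2,
\]
whose right-hand sides $m^{(i)}_n$ are of order $|\langle y_0, \psi_n^{(i)}\rangle|\, e^{-\lambda^{(i)}_{\alpha,\mu,n} T} / n$ thanks to the Riesz-basis estimate of Proposition \ref{rieszbasisresult}(4).

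I would first treat the small-time regime $T \in (0, T_0)$, where $T_0$ is the time provided by Theorem \ref{thm-biorth}. After the multiplicative rescaling $v(t) \mapsto e^{\alpha_2(T-t)}\tilde v(t)$ and the time translation $t \mapsto T/2 + t$, the exponentials become $\{e^{-\Lambda_{\alpha,\mu,n} t}\}_{n\geq 1}$ on $(-T/2, T/2)$, with $\Lambda_{\alpha,\mu,n}$ given by \eqref{increas-sequence}. By Proposition \ref{prop-incres-seq}, this sequence satisfies all assumptions of Theorem \ref{thm-biorth}, so there exists a biorthogonal family $\{q_n\}_{n\geq 1}$ obeying \eqref{boundqni}. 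I would then define the control by the series
\[
\tilde v(t) = \sum_{n\geq 1} \tilde m_n\, q_n(t),
\]
the shifted moments $\tilde m_n$ being of size $e^{-\lambda_{\alpha,\mu,n}T/2}\|y_0\|^{-1,\mu}_\alpha/n$. Since $\|q_n\|_{L^2} \leq C e^{C\sqrt{\lambda_{\alpha,\mu,n}}+C/T}$ and $\lambda_{\alpha,\mu,n} \gtrsim n^2$ by \eqref{gap}, the exponents of the general term are bounded by $-\tfrac{T}{2}n^2 + Cn + C/T$, which gives absolute convergence in $L^2$ and the small-time cost bound $\|v\|_{L^2(0,T)} \leq C e^{C/T}\|y_0\|^{-1,\mu}_\alpha$.

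For $T > T_0$, I would let the system evolve freely with $v \equiv 0$ on $(0, T-T_0)$; the well-posedness estimate from Proposition \ref{prop-transp} (or equivalently the spectral decomposition in $\mathcal{B}$) yields $\|y(T-T_0,\cdot)\|^{-1,\mu}_\alpha \leq C e^{CT}\|y_0\|^{-1,\mu}_\alpha$, and the small-time result applied on $(T-T_0, T)$ then produces a control of cost $\leq C e^{CT + C/T_0}$, which fits into the claimed bound $C e^{CT + C/T}$ since $1/T \leq 1/T_0$ in that regime. The main technical obstacle lies in the small-time step: one must carefully track the two shifts (the eigenvalue shift by $\alpha_2$ and the time translation by $T/2$) and combine \eqref{boundqni} with the spectral gap \eqref{gap} to obtain summability with the correct $T$-dependence uniformly on $(0, T_0)$. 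When $A$ has complex eigenvalues the sequence $\{\Lambda_{\alpha,\mu,n}\}$ is genuinely complex and the simpler Theorem \ref{thm-biorth0} does not suffice; the strong gap condition \eqref{strong-gap} and the counting estimate \eqref{ineq-counting} in Theorem \ref{thm-biorth} become essential, and it is precisely their validity that was secured in Proposition \ref{prop-incres-seq}.
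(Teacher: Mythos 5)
Your proposal follows essentially the same route as the paper's proof: the same reduction to a moment problem by testing against the Riesz basis $\mathcal{B}^*$, the same shift by $\alpha_2$ and recentering of time to $(-T/2,T/2)$, the same invocation of Theorem \ref{thm-biorth} via Proposition \ref{prop-incres-seq}, and the same series construction of $v$ with summability obtained from \eqref{boundqni} and the quadratic growth of $\lambda_{\alpha,\mu,n}$. The only (immaterial) difference is the case $T\geq T_0$, where the paper controls on $(0,T_0/2)$ and extends by zero while you coast freely and control on the final window $(T-T_0,T)$; both are standard and give the stated bound.
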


\begin{proof}
\smallskip
To prove Theorem \ref{Thm-null-cont}, we transform the controllability
problem into a moment problem. Using Proposition \ref{mainrelation}, we deduce that the control $v\in L^2(0,T)$ drives the solution of \eqref{bound-sys} to zero at time $T$ if and only if $v \in L^2(0,T)$ fulfills
\begin{equation}\label{firststep}
 \int_0^T B^*(x^\alpha \varphi_x) (t,1) \, v(t) \; dt = \langle y_0 , \varphi(0, \cdot) \rangle_{H^{-1,\mu}_{\alpha}, H_{\alpha,0}^{1,\mu}}, \quad \forall \varphi_0 \in H_{\alpha,0}^{1,\mu}(0, 1)^2
\end{equation}
where $\varphi \in  C^0\big([0, T];  H_0^{1,\mu}(0,1)^2 \big) \cap L^2\big(0, T; H^{2,\mu}(0,1)^2 \cap H^{1,\mu}_{0}(0,1)^2 \big)$ is the solution of the adjoint system \eqref{adjoint-sys} associated to $\varphi_0$.

Using Proposition \ref{propo-eigenf-LL*}, the corresponding solution $\varphi$ of system \eqref{adjoint-sys} associated to $\varphi_0$ is given by
\begin{align*}
&\varphi(t,x)=
&\sum_{k\geq 1} \Big(\langle \psi_k^{(1)}, \varphi_0 \rangle_{H^{-1,\mu}_{\alpha},H_{\alpha,0}^{1,\mu}}  \Psi_{k}^{(1)}e^{-\lambda_{\mu,k}^{(1)}(T-t)}
+  
\langle \psi_k^{(2)}, \varphi_0 \rangle_{H^{-1,\mu}_{\alpha},H_{\alpha,0}^{1,\mu}} \Psi_{k}^{(2)} e^{-\lambda_{\mu,k}^{(2)}(T-t)}\Big).
\end{align*}
From Proposition \ref{rieszbasisresult}, we have that $\mathcal{B}^*$ is a basis for $H_{\alpha,0}^{1,\mu}(0,1)^2$. In particular, we also deduce that
$\varphi(t,x) = \Psi_n^{(i)}(x) e^{-\lambda_{\alpha,\mu, n}^{(i)} (T-t)}$ is the solution of system \eqref{adjoint-sys} corresponding to
$\varphi_0= \Psi_n^{(i)} \in H_{\alpha,0}^{1,\mu}(0,1)^2$.
Therefore, we can deduce that the identity \eqref{firststep} is equivalent to
$$
\int_0^T B^* (x^\alpha \Psi_{n,x}^{(i)}) (1) v(t) e^{-\lambda_{\alpha,\mu, n}^{(i)} (T-t)} dt =  e^{-\lambda_{\alpha,\mu, n}^{(i)} T} \langle y_0 , \Psi_n^{(i)} \rangle_{H^{-1,\mu}_{\alpha}, H_{\alpha,0}^{1,\mu}}, \; \forall n\geq 1, \; i=1,2.
$$
Taking
into account the expressions of $\Psi_n^{(i)}$ (see \eqref{eigenf-L*}), we infer that $v\in L^2(0,T)$ is
a null control for system \eqref{bound-sys} associated to $y_0$ if and only if
\begin{align*}
\frac{(2-\alpha)^{\frac{3}{2}}  j_{\nu(\alpha,\mu), n} }{2|J'_{\nu(\alpha,\mu)}(j_{\nu(\alpha,\mu), n})|}  J'_{\nu(\alpha,\mu)} (j_{\nu(\alpha,\mu), n} ) B^* V_{i} &\int_0^T v(t) e^{-\lambda_{\alpha,\mu, n}^{(i)} (T-t)} dt \\
&\quad =  e^{-\lambda_{\alpha,\mu, n}^{(i)} T} \langle y_0 , \Psi_n^{(i)} \rangle_{H^{-1,\mu}_{\alpha}, H_{\alpha,0}^{1,\mu}},\; \forall n\geq 1, \; i=1,2
\end{align*}
and equivalently,
\begin{equation}\label{moment}
\int_0^T v(t) e^{-\lambda_{\alpha,\mu, n}^{(i)} (T-t)} dt = C_{\nu(\alpha,\mu),n}^{(i)} , \; \forall n\geq 1, \; i=1,2,
\end{equation}
where
$C_{\nu(\alpha,\mu),n}^{(i)}$ is given by
\begin{equation*}
C_{\nu(\alpha,\mu),n}^{(i)}=  \frac{2 |J'_{\nu(\alpha,\mu)}(j_{\nu(\alpha,\mu), n})| e^{-\lambda_{\mu, n}^{(i)} T} }{ (2-\alpha)^{\frac{3}{2}}  j_{\nu(\alpha,\mu), n}  J'_{\nu(\alpha,\mu)}(j_{\nu(\alpha,\mu), n} ) B^* V_{i}}  \langle y_0 , \Psi_n^{(i)}\rangle_{H^{-1,\mu}_{\alpha}, H_{\alpha,0}^{1,\mu}}, \;  \forall n\geq 1, \; i=1,2.
\end{equation*}
Performing the change of variable $s= T/2- t$ in \eqref{moment}, the controllability problem reduces then to the following moment problem:
Given $y_0 \in  H^{-1,\mu}_{\alpha}(0,1)^2$ find $v\in L^2(0,T)$ such that $u(s) = v(T/2- s) e^{\alpha_2s} \in L^2(-T/2,T/2)$ satisfies
\begin{equation}\label{moment1}
\int_{-T/2}^{T/2} u(s) e^{-(\lambda_{\alpha,\mu, n}^{(i)}+\alpha_2)s} ds = \widehat{C}_{\nu(\alpha,\mu),n}^{(i)} , \quad \forall n\geq 1, \quad i=1,2,
\end{equation}
with
\begin{equation}\label{expres1}
\widehat{C}_{\nu(\alpha,\mu),n}^{(i)} = e^{\lambda_{\alpha,\mu, n}^{(i)} T/2} C_{\nu(\alpha,\mu),n}^{(i)}.
\end{equation}
At this stage, the strategy to solve the moment problem \eqref{moment1} is to use the concept of biorthogonal family. In fact, Proposition \ref{prop-incres-seq} and Theorem \ref{thm-biorth} guarantee the existence of $T_0 > 0$, such that for any $T\in (0,T_0)$, there exists a biorthogonal family $\{q_n^{(1)}, q_n^{(2)} \}_{n\geq1}$ to
$\{e^{-(\lambda_{\alpha,\mu, n}^{(1)}+\alpha_2) t} , e^{-(\lambda_{\alpha,\mu, n}^{(2)}+\alpha_2) t}  \}_{n\geq1}$ in $ L^2(-T/2,T/2)$ which also satisfies
\begin{equation}\label{boundqni2}
\|q_n^{(i)} \|_{L^2(-T/2,T/2)} \leq C e^{\sqrt{\Re (\lambda_{\alpha,\mu, n}^{(i)}+\alpha_2)} + \frac{C}{T}}, \qquad \forall n \geq 1, \quad i=1,2.
\end{equation}
for some positive constant $C$ independent of $T$.

For $T<T_0$, a solution to the moment problem \eqref{moment1} is then given for every
$t\in(0,T)$ by
$$u(t) = \sum_{n\geq1} ( \widehat{C}_{\nu(\alpha,\mu),n}^{(1)} q_n^{(1)}(t)  +  \widehat{C}_{\nu(\alpha,\mu),n}^{(2)} q_n^{(2)}(t) ).$$
Thus
\begin{equation}\label{nullcontrol}
v(t)= \sum_{n\geq1}  \big( \widehat{C}_{\nu(\alpha,\mu),n}^{(1)} q_n^{(1)}(T/2- t)
+  \widehat{C}_{\nu(\alpha,\mu),n}^{(2)} q_n^{(2)}(T/2- t) \big) e^{-\alpha_2(T/2- t)}.
\end{equation}
The only remaining point is to prove that $v \in L^2(0,T)$ and to estimate its norm with respect to $T$
and $y_0$. This can be achieved thanks to the estimate \eqref{boundqni2}.
Indeed, from the expression of $\Psi_n^{(i)}$ and $\lambda_{\alpha,\mu, n}^{(i)}$, we can easily deduce the existence of a constant $C_1>0$ such that for $i=1,2$ :
\begin{equation*}
\| \Psi_n^{(i)}\|_{\mu}  \leq C_1 \sqrt{\lambda_{\alpha,\mu, n}}= C_1  j_{\nu(\alpha,\mu), n},\quad \; \forall n\geq 1.
\end{equation*}

From \eqref{expres1}, it is easy to see that there exists a new constants $C$ not depending on $n$ and $T$ such that
\begin{equation}\label{estinCnu}
|\widehat{C}_{\nu(\alpha,\mu),n}^{(i)}| \leq C e^{-\lambda_{\alpha,\mu, n}^{(i)} T/2}  \|y_0 \|_{ H^{-1,\mu}_{\alpha}},\quad  \forall n\geq 1,\quad i=1,2.
\end{equation}
Coming back to the expression \eqref{nullcontrol} of the null control $v$, taking into account the definition of $\lambda_{\alpha,\mu, n}^{(i)}$ and using the estimates \eqref{boundqni2} and \eqref{estinCnu}, we get
\begin{equation}\label{step2}
\|v\|_{L^2(0,T)} \leq  C e^{CT} \|y_0 \|_{ H^{-1,\mu}_{\alpha}} \sum_{n\geq1}  e^{-\lambda_{\alpha,\mu, n} T/2} e^{C\sqrt{\lambda_{\alpha,\mu, n}} + \frac{C}{T}}.
\end{equation}
Moreover, Young's inequality gives
$$ C\sqrt{\lambda_{\alpha,\mu, n}} \leq \frac{\lambda_{\alpha,\mu, n}T}{4}+ \frac{C^2}{T}$$
for every $n\geq1$ and $T>0$, so that
\begin{equation*}
\|v\|_{L^2(0,T)} \leq  C e^{C T+ \frac{C}{T}} \|y_0 \|_{ H^{-1,\mu}_{\alpha}} \sum_{n\geq1}  e^{-\lambda_{\alpha,\mu, n} T/4}.
\end{equation*}
On the other hand, by \eqref{boundcase1} and \eqref{boundcase2}, it can be easily checked that there exists a constant  $C> 0$ such that
$$
C  n^2 \leq  \lambda_{\alpha,\mu, n} =  j_{\nu(\alpha,\mu), n}^2, \quad \forall n \geq 1.
$$
Finally, for every $T<T_0$, we then have
\begin{align*}
\|v\|_{L^2(0,T)} &\leq  C e^{C T+ \frac{C}{T}} \|y_0 \|_{ H^{-1,\mu}_{\alpha}} \sum_{n\geq1}  e^{-C  n^2 T} \\
&\leq  C e^{C T+ \frac{C}{T}} \|y_0 \|_{ H^{-1,\mu}_{\alpha}} \int_{0}^{\infty} e^{-C T s^2}\,ds \\
&= C e^{C T+ \frac{C}{T}} \|y_0 \|_{ H^{-1,\mu}_{\alpha}} \sqrt{\frac{\pi}{T}}\\
&\leq C e^{C T+ \frac{C}{T}} \|y_0 \|_{ H^{-1,\mu}_{\alpha}},
\end{align*}
where $C$ is independent of $T$. This inequality shows that
$v \in L^2(0, T)$ and yields the desired estimate on the null control in the case where $T<T_0$.
The case $T\geq T_0$ is actually reduced to the previous one. Indeed,
any continuation by zero of a control on $(0,T_0/2)$ is a control on $(0,T)$ and the estimate
follows from the decrease of the cost with respect to the time.
This completes the proof of Theorem \ref{Thm-null-cont}.

\end{proof}





\begin{thebibliography}{99}

\bibitem{allal2021}
B. Allal, G. Fragnelli, J. Salhi,
{\it Null controllability for degenerate/singular parabolic equations with memory}, submitted.


\bibitem{AHMS2020} B. Allal, A. Hajjaj, L. Maniar, J. Salhi,
{\it Null controllability for singular cascade systems of $n$-coupled degenerate parabolic equations by one control force}, Evol. Equ. Control. Theory, appeared online.  https://doi.org/10.3934/eect.2020080.





\bibitem{KBGT2016}
F. Ammar-Khodja, A. Benabdallah, M. Gonz\'alez-Burgos, L. de Teresa,
{\it New phenomena for the null controllability of parabolic systems: Minimal time and geometrical dependence}, J. Math. Anal. Appl. 444 (2016), no. 2, 1071-1113.


\bibitem{KBGT2014}
F. Ammar-Khodja, A. Benabdallah, M. Gonz\'alez-Burgos, L. de Teresa,
{\it Minimal time for the null controllability of parabolic systems: the effect of
the condensation index of complex sequences}, J. Funct. Anal., 267 (2014), 2077-2151.


\bibitem{ABGT2011}
F. Ammar-Khodja, A. Benabdallah, M. Gonz\'alez-Burgos, L. de Teresa,
{\it The Kalman condition for the boundary controllability of coupled parabolic systems. Bounds on biorthogonal families to complex matrix exponentials}, J. Math. Pures Appl. (9) 96 (2011), 555-590.





\bibitem{ben2020}
A. Benabdallah, F. Boyer, and M. Morancey,
\textit{A block moment method to handle spectral
condensation phenomenon in parabolic control problems}, Annales Henri Lebesgue, 2020. to appear.

\bibitem{BBGBO14}
A. Benabdallah, F. Boyer, M. Gonzalez-Burgos, G. Olive, {\it Sharp estimates of the one-dimensional boundary control cost for parabolic systems and application to the N-dimensional boundary null controllability in cylindrical domains}, SIAM J. Control Optim. 52 (2014),
no. 5, 2970-3001.


\bibitem{biccari2019}
U. Biccari,
{\it Boundary controllability for a one-dimensional heat equation with a singular inverse-square potential},
Math. Control Relat. Fields, Vol. 9, Nr. 1 (2019), 191-219.

\bibitem{biccsantavan2020}
U. Biccari, V. Hernández-Santamaría, J. Vancostenoble
{\it Existence and cost of boundary controls for a degenerate/singular parabolic equation,	arXiv:2001.11403 [math.AP]
}


\bibitem{BZ2016}
U. Biccari, E. Zuazua, {\it Null controllability for a heat equation with a singular
inverse-square potential involving the distance to the boundary function}, J. Differential
Equations 261 (2016), 2809-2853.


\bibitem{CMV2017} P. Cannarsa, P. Martinez and J. Vancotsenoble,
{\it The cost of controlling weakly degenerate parabolic equations by boundary controls}, Mathematical Control and Related Fields, 7 (2017), 171-211.

\bibitem{CMV2020} P. Cannarsa, P. Martinez and J. Vancotsenoble,
{\it The cost of controlling strongly degenerate parabolic equations}, ESAIM: COCV, 26 (2020), 1-50.


\bibitem{Cazacu2014} C. Cazacu,
{\it Controllability of the heat equation with an inverse-square potential localized on the boundary},
SIAM J. Control Optim. 52 (2014), 2055-2089.



\bibitem{coron}
J.-M. Coron, {\it Control and Nonlinearity},
Mathematical Surveys and Monographs, 136, American Mathematical Society, Providence, RI, 2007.



\bibitem{Davies1995}
E. B. Davies, {\it Spectral theory and differential operators}, Cambridge Studies in Advanced
Mathematics, vol. 42, Cambridge University Press, Cambridge, 1995.



\bibitem{duprez}
M. Duprez,
{\it Controllability of a $2\times2$ parabolic system by one force with
space-dependent coupling term of order one},
ESAIM: COCV 23 (2017) 1473-1498.


\bibitem{Elbert2001}
A. Elbert,
{\it Some recent results on the zeros of Bessel functions and orthogonal polynomials},
J. Comput Appl. 133 (2001), 65-83.


\bibitem{Ervedoza2008}
S. Ervedoza, {\it Control and stabilization properties for a singular heat equation with an inverse-square potential},
Comm. Partial Differential Equations 33 (2008), 1996-2019.

\bibitem{fatrus1974}
H.O. Fattorini and D.L. Russell,
{\it Uniform bounds on biorthogonal functions for real exponentials with an application to the control theory of parabolic equations},
Quart. Appl. Math. 32 (1974/75), 45-69.


\bibitem{fatrus1971}
H.O. Fattorini and D.L. Russell,
{\it Exact controllability theorems for linear parabolic equations in one space dimension},
Arch. Ration. Mech. Anal. 43 (1971) 272-292.

\bibitem{FBT}
E. Fern\'andez-Cara, M. Gonz\'alez-Burgos, L. de Teresa,
{\it Boundary controllability of parabolic coupled equations}, J. Funct. Anal. 259 (7) (2010) 1720-1758.

\bibitem{F2016} G. Fragnelli, {\it Interior degenerate/singular parabolic equations in nondivergence form: well-posedness and Carleman estimates}, J. Differential Equations, \textbf{260} (2016), 1314--1371.

\bibitem{FM2017} G. Fragnelli, D. Mugnai, {\it Carleman estimates for singular parabolic equations with interior dege\-ne\-racy and non smooth coefficients}, Adv. Nonlinear Anal., \textbf{6} (2017), 61--84.

\bibitem{Gueye} M. Gueye, {\it Exact boundary controllability of 1-D parabolic and hyperbolic degenerate equations}, 
SIAM J. Control Optim., 52 (2014), 2037-2054.

\bibitem{BandN}
M. Gonz\'{a}lez-Burgos, G. R. Sousa-Neto,
{\it Boundary controllability of a one-dimensional phase-field system with one control force}, J. Diffrential Equations (2020), https://doi.org/10.1016/j.jde.2020.03.036


\bibitem{HMS2016}
A. Hajjaj, L. Maniar and J. Salhi,
{\it Carleman estimates and null controllability of degenerate/singular parabolic systems}, Electron. J. Differential Equations, 2016 (2016), 1-25.


\bibitem{chris}
C. Heil, A basis theory primer, expanded edition, Applied and Numerical Harmonic
Analysis, Birkh\"{a}user/Springer, New York, 2011.

\bibitem{KL2005}
V. Komornik and P. Loreti,
{\it Fourier series in control theory}, Springer, Berlin, 2005.


\bibitem{Lebedev}
N.N. Lebedev,
{\it Special Functions and their Applications}, Dover Publications, New York, 1972.



\bibitem{Lions68}
J.L. Lions, E. Magenes,
{\it Probl\`emes aux limites non homog\`enes et applications},
vol. 1, Travaux et Rech. Math., vol. 17, Dunod, Paris, 1968.

\bibitem{LM2008}
L. Lorch, M.E. Muldoon,
{\it Monotonic sequences related to zeros of Bessel functions}, Numer. Algor 49 (2008), 221-233.

\bibitem{MarVan2019} P. Martinez, J. Vancostenoble,
{\it The cost of boundary controllability for a parabolic equation with inverse square potential},
Evol. Equ. Control Theory, Vol. 8, Nr. 2 (2019), 397-422.


\bibitem{S2018}
J. Salhi,
{\it Null controllability for a singular coupled system of degenerate parabolic equations in nondivergence form}, Electron. J. Qual. Theory Differ. Equ.,
13 (2018),  1-28.


\bibitem{Van2011} J. Vancostenoble,
{\it Improved Hardy-Poincar\'e inequalities and sharp Carleman estimates for degenerate/singular parabolic
problems}, Discrete Contin. Dyn. Syst. Ser. S 4 (2011), 761-790.






\bibitem{VanZua2008} J. Vancostenoble, E. Zuazua,
{\it Null controllability for the heat equation with singular inverse-square potentials}, J. Funct.
Anal. 254 (2008), 1864-1902.



\bibitem{Watson}
G. N. Watson,
{\it A treatise on the theory of Bessel functions}, second edition, Cambridge University Press, Cambridge, 1944.


\bibitem{Zab1995}
J. Zabczyk,
{\it Mathematical control theory: an introduction}, Birkh\"{a}user, Boston, 1995.


\end{thebibliography}
\end{document}